\def\stackrel#1#2{\mathrel{\mathop{#2}\limits^{#1}}}
\numberwithin{equation}{section}
\newtheorem{theorem}{Theorem}[section]
\newtheorem{lemma}[theorem]{Lemma}
\newtheorem{corollary}[theorem]{Corollary}
\newtheorem{proposition}[theorem]{Proposition}
\theoremstyle{definition}
\newtheorem{definition}[theorem]{Definition}
\newtheorem{remark}[theorem]{Remark}
\newtheorem{example}[theorem]{Example}
\newcommand{\R}{{\mathbb R}}
\DeclareFontFamily{U}{mathx}{\hyphenchar\font45}
\DeclareFontShape{U}{mathx}{m}{n}{
      <5> <6> <7> <8> <9> <10>
      <10.95> <12> <14.4> <17.28> <20.74> <24.88>
      mathx10
      }{}
\DeclareSymbolFont{mathx}{U}{mathx}{m}{n}
\DeclareMathSymbol{\bigtimes}{1}{mathx}{"91}
\date{\vspace{-9ex}}
\title{The conditional Gaussian multiplicative chaos   structure underlying a critical  continuum random polymer model on a  diamond fractal}
\date{  }
  \author{ \textbf{Jeremy Thane Clark}\footnote{ {\tt
jeremy@olemiss.edu}} \vspace{.1cm}  \\  University of Mississippi, Department of Mathematics   }
\begin{document}
\maketitle

\begin{abstract}
We discuss a Gaussian multiplicative chaos (GMC) structure underlying a family  of random measures  $\mathbf{M}_r$, indexed by $r\in\R$, on a space $\Gamma$ of directed pathways crossing a diamond fractal with Hausdorff dimension two.  The laws of these random continuum path measures arise in a critical weak-disorder limiting regime  for discrete directed polymers on disordered hierarchical graphs.  For the analogous subcritical continuum polymer model in which the diamond fractal has Hausdorff dimension less than two, the random path measures can be constructed as  subcritical GMCs through  couplings to a spatial Gaussian white noise. This construction fails in the critical dimension two where, formally, an infinite coupling strength to the environmental noise would be required to generate the disorder. We prove, however, that there is a conditional GMC interrelationship between the random measures $(\mathbf{M}_r)_{r\in \R}$ such that the law of $\mathbf{M}_r$ can be constructed as a subcritical GMC with random reference measure $\mathbf{M}_R$ for any choice of $R\in (-\infty, r)$.  A similar GMC structure plausibly would hold for a critical  continuum (2+1)-dimensional directed polymer model.

\end{abstract}

\section{Introduction}

 A Gaussian field    $\mathbf{W}$ on a measure space  $(\Gamma,\mu)$ and defined over a  probability space $ (\Omega,\mathscr{F}, \mathbb{P} )  $ is  a bounded linear map $\mathbf{W}: L^2(\Gamma, \mu ) \rightarrow  L^2(\Omega,
\mathscr{F},\mathbb{P} )$ in which the range of $\mathbf{W}$ is a  Gaussian subspace of $L^2(\Omega, \mathscr{F},\mathbb{P} )$.   
  For $\psi\in L^2(\Gamma,\mu )$ we have the alternative notations
\begin{align}\label{TestFun}
\mathbf{W}(\psi)\,\equiv \, \langle\mathbf{W}, \psi\rangle \,\equiv\, \int_{\Gamma}  \mathbf{W}(p) \psi(p)\mu(dp)\,,   
\end{align}
 where the field ``variables" $\{\mathbf{W}(p)  \}_{p\in \Gamma }$, in nontrivial cases, can only be understood in a distributional sense as defining  random variables  when integrated against an appropriate test function $\psi $.  A \textit{Gaussian multiplicative chaos} (GMC) on $(\Gamma,\mu  )$   generated by the field $\mathbf{W}\equiv\{\mathbf{W}(p)  \}_{p\in \Gamma }$ for a coupling strength $\beta\geq 0$ is a random measure $\mathbf{M}_{\beta}$ on $\Gamma$ that is formally expressed as
\begin{align}\label{FormalGMC}
\mathbf{M}_{\beta}(dp)\,=\, e^{ \beta \mathbf{W}(p)-\frac{\beta^2}{2}\mathbb{E}[ \mathbf{W}^2(p)]}\mu(dp)  \,.
\end{align}
In general,  questions of existence and uniqueness for random measures of this type require an indirect  technical interpretation since the field variables $  \mathbf{W}(p)$ are not actual random variables, and thus the factor $ \textup{exp}\big\{ \beta \mathbf{W}(p)-\frac{\beta^2}{2}\mathbb{E}[ \mathbf{W}^2(p)]\big\}$ is not simply a random Radon-Nikodym derivative, $d\mathbf{M}_{\beta}/d\mu  $. In nontrivial cases, $\mathbf{M}_{\beta}$ is a.s.\ mutually singular to $\mu$ even though the expectation measure $\mathbb{E}[\mathbf{M}_{\beta}]$ is absolutely continuous with respect to $\mu$.

The first rigorous approach to defining GMC measures was introduced by Kahane~\cite{Kahane} in 1985.  Background for understanding the motivation for Kahane's work and  a discussion of some of the important  directions that GMC theory has taken since then can be found in the review~\cite{Rhodes} by Rhodes and Vargas; see also Junnila's dissertation~\cite{Junnila1} for a summary of  GMC theory that includes discussion of some additional recent contributions.  In this article we will use the definitional framework for GMC  proposed by  Shamov in~\cite{Shamov}.

 A GMC $\mathbf{M}_{\beta}$ is said to be \textit{subcritical} if the expectation measure, $\mathbb{E}[\mathbf{M}_{\beta}]$, is  $\sigma$-finite and \textit{critical} otherwise.  In the subcritical case, it can be assumed that $\mathbb{E}[\mathbf{M}_{\beta}]=\mu$ and that the covariance operator $\textbf{T}$ of the field $\mathbf{W}$ has a kernel $T(p,q)=\mathbb{E}\big[\mathbf{W}(p)\mathbf{W}(q)   \big]$  related to the point  correlations of $\mathbf{M}_{\beta}$ as follows:
\begin{align}\label{Joint}
\mathbb{E}\big[\mathbf{M}_{\beta}(dp) \mathbf{M}_{\beta}(dq)  \big]\,=\,e^{\beta^2 T(p,q)}\mu(dp)\mu(dq)\,.  
\end{align}
In particular, the measure $\upsilon_{\beta} :=\mathbb{E}[\mathbf{M}_{\beta}\times\mathbf{M}_{\beta} ]$ on $\Gamma\times \Gamma$ is absolutely continuous with respect to the product measure  $\mu\times \mu=\mathbb{E}[\mathbf{M}_{\beta}]\times \mathbb{E}[\mathbf{M}_{\beta}] $.\footnote{See Lemma 34 and the discussion following it in~\cite{Shamov}.}   The GMC formalism~(\ref{FormalGMC}) potentially defines a family of laws  $(\mathbf{M}_{\beta})_{\beta\geq 0}$ for random measures on the space $\Gamma$ such that for $0\leq \alpha<\beta$ the law of $\mathbf{M}_{\beta}$ can be formally constructed from $\mathbf{M}_{\alpha}$ as
\begin{align}\label{RelativeGMC}
\mathbf{M}_{\beta}(dp)\,\stackrel{\mathcal{L}}{=}\,e^{ \sqrt{\beta^2 -\alpha^2} \mathbf{W}_{\mathsmaller{\mathbf{M}_{\alpha}}}(p)-\frac{\beta^2-\alpha^2}{2}\mathbb{E}[ \mathbf{W}^2_{\mathsmaller{\mathbf{M}_{\alpha}}}(p)\,|\, \mathbf{M}_{\alpha}]}\mathbf{M}_{\alpha}(dp)\,,
\end{align}
where the field $\{\mathbf{W}_{\mathbf{M}_{\alpha}}(p)\}_{p\in \Gamma}$ is Gaussian  with kernel $T(p,q)=\mathbb{E}\big[\mathbf{W}_{\mathbf{M}_{\alpha}}(p) \mathbf{W}_{\mathbf{M}_{\alpha}}(q)\,|\,\mathbf{M}_{\alpha}  \big]$  when conditioned on $\mathbf{M}_{\alpha}$.  This conditional form for the field means that $\mathbf{W}_{\mathbf{M}_{\alpha}}:  L^2(\Gamma,\mathbf{M}_{\alpha} ) \rightarrow  \frak{G}$ is a  bounded linear map depending measurably on $\mathbf{M}_{\alpha}$, where $\frak{G}$ 
is a Gaussian subspace of $ L^2(\Omega,\mathscr{F},\mathbb{P} )$ whose variables are jointly independent of $\mathbf{M}_{\alpha}$.

In this article we will show that a family of  random measures, $( \mathbf{M}_r )_{r\in \R}$, introduced in~\cite{Clark4} satisfies a  constructive  GMC interrelationship analogous to~(\ref{RelativeGMC}) even though the random measures  $ \mathbf{M}_r$ are not GMCs  with respect to a deterministic ``pure" measure $\mu$ as in~(\ref{FormalGMC}).    The  measures $\mathbf{M}_r $ are defined on a space $\Gamma$ of directed pathways through a compact diamond fractal $D$ having Hausdorff dimension two, and their laws derive from a  continuum/weak-disorder limiting regime for models of  directed polymers on disordered hierarchical graphs.
 The family of random measure laws $(\mathbf{M}_r)_{r\in \R}$ satisfies properties (I)-(V) below for any fixed $r,R\in \R$ with $R\leq r$, where proving (IV) \& (V) is the focus of coming sections. 
\begin{enumerate}[(I)]

\item $\mathbb{E}[ \mathbf{M}_r ]=\mu$, where $\mu$ is a probability measure on $\Gamma$.  Moreover, the law of  $\mathbf{M}_r$ converges to the deterministic measure $\mu$  as $r\searrow -\infty$.

\item Unlike~(\ref{Joint}), the correlation measure $\upsilon_r:=\mathbb{E}[ \mathbf{M}_r \times  \mathbf{M}_r]$ on  $\Gamma\times \Gamma$ is not absolutely continuous with respect to the product measure $\mu\times \mu$.

\item   The Radon-Nikodym derivative of $\upsilon_{r}$ with respect to  $\upsilon_{R}$ is $\exp\{ (r-R) T(p,q) \}$ for a nonnegative kernel $T(p,q)$, and the product measure $\mu\times \mu$ is supported on the set of pairs $(p,q)\in \Gamma\times \Gamma$ with $T(p,q)=0$ (the paths $p$ and $q$ are essentially nonintersecting).  In contrast, the random product measure $\mathbf{M}_r \times  \mathbf{M}_r$ a.s.\ assigns positive weight to the set of $(p,q)$ such that $T(p,q)>0$.

\item  For a.e.\ realization of the random measure $(\Gamma,\mathbf{M}_R)$  there is a field $\big\{\mathbf{W}_{\mathbf{M}_R}(p)\big\}_{p\in \Gamma}$ that is Gaussian with correlation kernel $T(p,q)$ when conditioned on $\mathbf{M}_R$  and for which the following  GMC, $\mathds{M}_{R,r-R}$, is well-defined:  
\begin{align*}
 \mathds{M}_{R,r-R}(dp)\,=\, e^{  \sqrt{r-R}\mathbf{W}_{\mathbf{M}_R}(p)-\frac{r-R}{2}\mathbb{E}[ \mathbf{W}_{\mathbf{M}_R}^2(p)\,|\,\mathbf{M}_R ]}\mathbf{M}_R(dp) \,. 
 \end{align*}

\item The random measure $(\Gamma,\mathds{M}_{R,r-R})$ is equal in law to $(\Gamma,\mathbf{M}_{r})$.

\end{enumerate}
Property (I) implies that $\mathbf{M}_r$ is not a critical GMC since $\mathbb{E}[ \mathbf{M}_r ]$ is finite, and (II) implies that $\mathbf{M}_r$ is not a subcritical GMC since it does not satisfy $\mathbb{E}[ \mathbf{M}_r \times  \mathbf{M}_r] \ll \mathbb{E}[ \mathbf{M}_r] \times \mathbb{E}[ \mathbf{M}_r]  $. Intuitively, since $M_{R}$ approaches $\mu$ as $R\rightarrow -\infty$ and  the coupling strength $\beta=\sqrt{r-R}$ in (IV) blows up with $-R\gg 1$,  the measures $\mathbf{M}_r$ are too heavily disordered to admit a subcritical GMC form with respect to $\mu$.  The assertion in (III)  suggests a strong form of locality for the continuum disordered polymer model defined by $\mathbf{M}_r$ in which the ``disordered environment"  effectively confines the polymers to a  measure zero portion of the space in such a way that independently chosen paths can have nontrivially richer intersection sets than is possible under the pure system.

  Analogous  continuum directed polymer models  on disordered diamond fractals with Hausdorff dimension less than two have a  canonical subcritical GMC construction~\cite{Clark2}, so  dimension two is critical in this family of models. As is the case for the continuum (1+1)-dimensional directed polymer model defined by Alberts, Khanin, and Quastel in~\cite{alberts2}, the random measures defining subcritical continuum polymer models on  diamond fractals  can  be constructed through well-behaved Wiener chaos expansions; see~\cite[Theorem 1.25]{Clark2}.    We conjecture that an analogous conditional GMC structure to that outlined in (I)-(V) would hold for a continuum polymer model  emerging from the critical weak-disorder limiting regime studied by Caravenna, Sun, and Zygorus~\cite{CSZ4} for (2+1)-dimensional polymers. This limiting regime corresponds to the critical case of the  2d stochastic heat equation also recently studied  by Gu, Quastel, and Tsai in~\cite{GQT}.  For a discussion highlighting some of the similarities of the critical regime of  the $(2+1)$-dimensional polymer and our hierarchical model,  we refer the reader to the introduction of~\cite{Clark3}.

\section{GMC definition and a statement of the main result}\label{SecGMC}

Before defining the continuum polymer  model considered in this article, we will state Theorem~\ref{ThmMain}, which is a more precise formulation of the interrelational GMC structure of the family $(\mathbf{M}_r)_{r\in\R}$ summarized in points (IV) \& (V) above.  To do this, we first  turn to the definition of subcritical GMC introduced in~\cite{Shamov} and summarize some of the important theorems in that framework (Section~\ref{SecGMCDef}).  We then form a slight generalization of the GMC definition that allows having a random reference measure and coupling to the field (Section~\ref{SecGMCExt}).\vspace{.2cm}   

We will maintain the following notational and terminological conventions:
\begin{itemize}
\item All $L^p$ spaces in this text refer to real-valued functions.
\item  $\mathcal{H}$ always denotes an infinite-dimensional separable real Hilbert space.
\item The triple $(\Omega,\mathscr{F}, \mathbb{P} )$ denotes the underlying probability space, and $\mathbb{E}$ denotes its corresponding expectation.
\item A linear isometry $W:\mathcal{H}\rightarrow L^2(\Omega,\mathscr{F}, \mathbb{P} )$ whose range is a Gaussian linear space is referred to as a \textit{Gaussian field on} $\mathcal{H}$ or, alternatively, a \textit{standard Gaussian random vector in} $\mathcal{H}$.

\end{itemize}

\subsection{A  formulation of  GMC through Cameron-Martin  shifts of the Gaussian field}\label{SecGMCDef}

In the discussion below,  $\mu$ denotes a  finite measure on a standard Borel measurable space $(\Gamma,\mathcal{B}_{\Gamma})$.  The approach in~\cite{Shamov} to addressing questions of existence and uniqueness of a GMC on $(\Gamma,\mu)$ over a Gaussian field $\mathbf{W}\equiv \{\mathbf{W}(p)\}_{p\in \Gamma}$ 
begins by reformulating the field $\mathbf{W}$ through a pair of linear operators $(W,Y)$ on a  Hilbert space  $\mathcal{H}$  in which $W:\mathcal{H}\rightarrow L^2(\Omega,\mathscr{F}, \mathbb{P} )$ is a Gaussian field on $\mathcal{H}$, and the map $Y:\mathcal{H}\rightarrow L^0(\Gamma ,\mu )$ is  continuous, where the codomain $L^0(\Gamma ,\mu )$ denotes the space of $\mu$-equivalence classes of measurable functions on $\Gamma$ endowed with the topology of convergence in measure.  In the terminology of~\cite{Shamov}, the  linear map $Y$ is referred to as a \textit{generalized $\mathcal{H}$-valued function},  and we can view $Y$ as a function that  sends elements $p \in\Gamma$ to generalized vector values  $Y(p)$ having ``inner product"  $\langle Y(p),\phi\rangle :=(Y\phi)(p) $ with $\phi \in \mathcal{H}$  for  $\mu$-a.e.\ $p$.  Similarly, $W$ determines a random generalized vector  $ W(\omega)$  with  $\langle W(\omega),\phi\rangle :=  (W\phi)(\omega)$ for $\mathbb{P}$-a.e.\ $\omega\in \Omega$.  The generalized values $Y(p)$ and $W(\omega)$ can be viewed as elements of a Frechet space  containing $\mathcal{H}$ as a subspace, for instance, by choosing an  orthonormal basis $(\phi_n)_{n\in \mathbb{N}}$ of $\mathcal{H}$ and identifying  $Y(p)\equiv \big(\langle Y(p),\phi_n\rangle\big)_{n\in \mathbb{N}}$ and $W(\omega)\equiv \big(\langle W(\omega),\phi_n\rangle\big)_{n\in \mathbb{N}}$, i.e., as elements of  $\R^{\mathbb{N}} $.  Moreover, a function $Y:\Gamma\rightarrow \R^{\mathbb{N}}$ determines a generalized $\mathcal{H}$-valued function provided that for any $(\alpha_n)_{n\in \mathbb{N}} \in \ell^2$   the sequence of functions $ \sum_{n=1}^N \alpha_{n}   Y_n(p) $ converges in $L^0(\Gamma,\mu)$   as $N\rightarrow \infty$  to a limit that defines $\langle Y(p),\phi\rangle$ for $\phi = \sum_{n=1}^\infty \alpha_n \phi_n$.  The Gaussian field $\mathbf{W}$ is then formally given by $ \mathbf{W}(p)= \big\langle W, Y(p)\big\rangle  $ through
\begin{align}
\int_{\Gamma}\mathbf{W}(p)\psi(p)\mu(dp)\,:=\,\bigg\langle  W, \int_{\Gamma}Y(p)\psi(p)\mu(dp)\bigg\rangle \,
\end{align}
for a suitable class of test functions $\psi \in L^0(\Gamma,\mu)$ such that $\int_{\Gamma}Y(p)\psi(p)\mu(dp)\in \mathcal{H}$.  The covariance operator of $\mathbf{W}$ is  determined by the quadratic form sending $\psi$ to $  \big\|\int_{\Gamma}Y(p)\psi(p)\mu(dp)   \big\|^2_{\mathcal{H}}      $ and has kernel formally expressed as
$$ T(p,q)\,=\,\mathbb{E}\big[ \mathbf{W}(p)\mathbf{W}(q)  \big] \,=\,\big\langle Y(p),\,Y(q)\big\rangle\hspace{.5cm}\text{for}\hspace{.5cm}p,q\in \Gamma\,.  $$
The  equivalence between defining the Gaussian field $ \mathbf{W}$ through the pair $(W,Y)$ and through  integration against $L^2$ test functions as in~(\ref{TestFun}) is explained in~\cite[Appendix A]{Shamov}. 

The following definition   provides an abstract characterization of a  GMC formally satisfying \begin{align}\label{ReGMCFormal}
\mathbf{M}(dp)= e^{\langle W, Y(p)\rangle -\frac{1}{2}\mathbb{E}[\langle W, Y(p)\rangle^2] } \mu(dp)\,,
\end{align}
 where we  specialize the  statement to the relevant subcritical case.  The basic (trivial) case of this GMC form is when $Y$ defines an $\mathcal{H}$-valued function (i.e., $ Y(p)\in \mathcal{H}   $ for $\mu$-a.e.\ $p\in \Gamma $)---as opposed to merely a \textit{generalized} $\mathcal{H}$-valued function taking values in $\R^{\mathbb{N}}$.

\begin{definition}\label{DefGMC} Let $W$ be a  Gaussian field on $\mathcal{H}$ and $Y:\mathcal{H}\rightarrow L^{0}(\Gamma,\mu)$ be linear and continuous.  A   \textit{subcritcal Gaussian multiplicative chaos} $\mathbf{M}$  over   $(W,Y)$ with expectation $\mu$ is a random finite  measure on $\Gamma$ satisfying (I)-(III) below. 

\begin{enumerate}[(I)]

\item  $\mathbb{E}[ \mathbf{M}]= \mu$, i.e., $\mathbb{E}[ \mathbf{M}(A)]= \mu(A)$  for any measurable set $A\subset \Gamma$.

\item $\mathbf{M}$ is measurable with respect to the field $W$ and can thus be expressed as a measurable function of $W$.

\item  For $\phi\in \mathcal{H}$ and  a.e.\ realization of the field $W$, there is the equality between measures
$$ \mathbf{M}(W+\phi, dp)\,=\,e^{ \langle Y(p),\phi\rangle  }\mathbf{M}(W, dp)   \,. $$

\end{enumerate}

\end{definition}
$\mathbf{M}$ is also referred to as the \textit{GMC associated to $Y$ with expectation $\mu$} in contexts where only the law of the GMC  is relevant to the discussion and not its relationship to the underlying field $W$.
\begin{remark}\label{RemarkFun} As a consequence of (I) in Definition~\ref{DefGMC}, a null set  $A\subset \Gamma$ of  $\mu$ is a.s.\ a null set of $\mathbf{M}$.  In particular,  $\mu$-equivalent measurable functions $\psi_1$ and $\psi_2$  on $\Gamma$  are a.s.\ $\mathbf{M}$-equivalent even though $\mathbf{M}$ is a.s.\ mutually singular to $\mu$ in nontrivial cases.  Thus, measures $\psi_1(p)\mathbf{M}(W, dp) $ and  $\psi_2(p)\mathbf{M}(W, dp) $ are a.s.\ equal for  representatives $\psi_1$ and $\psi_2$ in the  $\mu$-equivalence class of $ e^{\langle Y(p),\phi\rangle }$, which shows that the right side of the equality in (III) does not have ambiguity arising from  this consideration. 

\end{remark}


 The formula in property (III) of Definition~\ref{DefGMC} determines how the GMC measure is changed by a Cameron-Martin shift of the underlying Gaussian field; see~\cite[Chapter 14]{Janson} for a discussion of Cameron-Martin shifts.  If $\mathcal{P}$ denotes the probability measure of the Gaussian field $W$---viewed as an $\R^{\mathbb{N}}$-valued random variable---and $\mathcal{P}_{\phi}$ is the push-forward measure of $\mathcal{P}$ by the shift map $S_{\phi}(\varphi )=\varphi+\phi$, then $\mathcal{P}_{\phi}$ is absolutely continuous with respect to  $\mathcal{P}$ and has Radon-Nikodym derivative 
\begin{align} \label{CMS}
 \frac{d\mathcal{P}_{\phi}}{d\mathcal{P}}\,=\,\textup{exp}\Big\{\langle W,\phi\rangle -\frac{1}{2}\|\langle W,\phi\rangle\|^2   \Big\}\,.
 \end{align}
   With this relation in mind, the GMC formalism~(\ref{ReGMCFormal}) suggests the identity
\begin{align}\label{Identity}
\mu(dp)\mathcal{P}_{Y(p)}(dW)\,=\,  \mathbf{M}(W, dp) \mathcal{P}(dW) \,,
\end{align}
where $\mathcal{P}_{Y(p)}$ denotes the probability measure of the shifted field $W+Y(p)$. Note that by integrating out $p\in \Gamma$ in~(\ref{Identity}) we get that the total mass $ \mathbf{M}(W, \Gamma)$ is the Radon-Nikodym derivative of $\int_{\Gamma}\mathcal{P}_{Y(p)}\mu(dp) $ with respect to $ \mathcal{P}$. Based on these observations, the theorem below gives an important criterion  for the existence and uniqueness of a GMC over $(W,Y)$.

\begin{definition}  Let $\mathcal{P}$ denote the measure on $\mathbb{R}^{\mathbb{N}}$ determined by a  Gaussian standard random vector $W$ on  $\mathcal{H}$.  A continuous linear map $Y:\mathcal{H}\rightarrow L^{0}(\Gamma,\mu)$  is said to be a \textit{randomized shift} if the measure  $\widetilde{\mathcal{P}}=\int_{\Gamma}\mathcal{P}_{Y(p)}\mu(dp) $, i.e., the marginal of $Q(dW,dp)=\mu(dp)\mathcal{P}_{Y(p)}(dW)$,  is absolutely continuous with respect to $\mathcal{P}$.

\end{definition}

 In the trivial case that $Y(p)\in \mathcal{H}$ for $\mu$-a.e.\ $p\in \Gamma$, i.e., the operator $Y$ defines an $\mathcal{H}$-valued function, $Y$ is a randomized shift as a consequence of the Cameron-Martin formula~(\ref{CMS}). 
\begin{theorem}[Theorem 17 \& Corollary 18 of~\cite{Shamov}]\label{ThmShamov} There exists a subcritical GMC over $(W,Y)$ with expectation $\mu$   iff $Y$ is a randomized shift.  The GMC is unique in law and also as a function of $W$ when it exists. 
\end{theorem}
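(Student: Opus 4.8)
The plan is to organize the whole proof around the central identity~(\ref{Identity}), namely the claim that the joint measure $\Lambda(dW,dp):=\mathbf{M}(W,dp)\,\mathcal{P}(dW)$ on $\mathbb{R}^{\mathbb{N}}\times\Gamma$ coincides with $Q(dW,dp)=\mu(dp)\,\mathcal{P}_{Y(p)}(dW)$. Once $\Lambda=Q$ is available, all three assertions follow quickly. Integrating out $p\in\Gamma$ and using that each $\mathcal{P}_{Y(p)}$ is a probability measure shows that the $W$-marginal of $Q$ is $\widetilde{\mathcal{P}}$ while the $W$-marginal of $\Lambda$ is $\mathbf{M}(W,\Gamma)\,\mathcal{P}(dW)$; hence $\widetilde{\mathcal{P}}\ll\mathcal{P}$ with density the total mass $\mathbf{M}(W,\Gamma)$, which is exactly the randomized-shift property. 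The same equality $\Lambda=Q$ forces $\mathbf{M}(W,dp)$ to equal the disintegration of $Q$ against its $W$-marginal multiplied by $d\widetilde{\mathcal{P}}/d\mathcal{P}$, pinning it down $\mathcal{P}$-a.s.\ as a function of $W$ and thereby yielding both forms of uniqueness. So the work is entirely in proving $\Lambda=Q$ in each direction.

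For the forward direction (existence $\Rightarrow$ randomized shift), I would test both $\Lambda$ and $Q$ against the family of functionals $e^{\langle W,\phi\rangle}g(p)$ with $\phi\in\mathcal{H}$ and $g$ bounded measurable on $\Gamma$. For $Q$ the Gaussian Laplace transform $\mathbb{E}[e^{\langle W,\phi\rangle}]=e^{\frac12\|\phi\|^2}$ and the splitting $\langle W+Y(p),\phi\rangle=\langle W,\phi\rangle+\langle Y(p),\phi\rangle$ give
\[
\int e^{\langle W,\phi\rangle}g(p)\,Q(dW,dp)=e^{\frac12\|\phi\|^2}\int_{\Gamma}g(p)\,e^{\langle Y(p),\phi\rangle}\mu(dp).
\]
For $\Lambda$ I would invoke the Cameron--Martin shift~(\ref{CMS}) together with property (III) of Definition~\ref{DefGMC}: substituting $W\mapsto W+\phi$ turns $\mathbf{M}(W+\phi,dp)=e^{\langle Y(p),\phi\rangle}\mathbf{M}(W,dp)$ into the Radon--Nikodym weight $e^{\langle W,\phi\rangle-\frac12\|\phi\|^2}$, producing
\[
\mathbb{E}\Big[e^{\langle W,\phi\rangle}\!\int g\,d\mathbf{M}\Big]=e^{\frac12\|\phi\|^2}\,\mathbb{E}\Big[\int_{\Gamma}g(p)\,e^{\langle Y(p),\phi\rangle}\mathbf{M}(dp)\Big].
\]
Because $e^{\langle Y(p),\phi\rangle}=e^{(Y\phi)(p)}$ is a deterministic function of $p$, property (I) collapses the right-hand expectation to $e^{\frac12\|\phi\|^2}\int_\Gamma g(p)e^{\langle Y(p),\phi\rangle}\mu(dp)$, matching the computation for $Q$. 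Matching over all $\phi$ and $g$ yields $\Lambda=Q$.

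For the reverse direction (randomized shift $\Rightarrow$ existence) I would run this construction backwards: given $\widetilde{\mathcal{P}}\ll\mathcal{P}$, disintegrate $Q(dW,dp)=\widetilde{\mathcal{P}}(dW)\,K(W,dp)$ over its $W$-marginal and set $\mathbf{M}(W,dp):=\tfrac{d\widetilde{\mathcal{P}}}{d\mathcal{P}}(W)\,K(W,dp)$, so that $\Lambda=Q$ holds by construction. Property (II) is then immediate, property (I) holds because the $p$-marginal of $Q$ is $\mu$, and property (III) is recovered by reversing the Cameron--Martin computation above. The main obstacle, and the reason the randomized-shift hypothesis is precisely what is needed, is that $Y(p)$ is only a \emph{generalized} $\mathcal{H}$-valued vector, so the pointwise shifted laws $\mathcal{P}_{Y(p)}$ are typically mutually singular to $\mathcal{P}$; the quasi-invariance~(\ref{CMS}) is available only for genuine Cameron--Martin directions $\phi\in\mathcal{H}$, and the delicate content is that averaging these singular shifts against $\mu$ can nonetheless yield a measure absolutely continuous to $\mathcal{P}$. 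A secondary technical point is confirming that the tested functionals are determining for the finite measures at hand; since real exponentials need not be $\mu$-integrable against $e^{\langle Y(p),\phi\rangle}$, the robust route is to use the bounded Fourier functionals $e^{\I\langle W,\phi\rangle}$, at the cost of extending the shift relation to imaginary directions.
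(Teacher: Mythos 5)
This statement is not proved in the paper at all: it is imported verbatim from~\cite{Shamov} (Theorem 17 and Corollary 18 there), and the surrounding text only motivates it via the identity~(\ref{Identity}). So your proposal can only be judged on its own merits. Its skeleton is the right one, and indeed it is the one the paper's motivating discussion (and Shamov's theory) is built around: the joint-law identity $\Lambda=Q$ of~(\ref{Identity}) simultaneously encodes the randomized-shift property (via the $W$-marginal), existence (via disintegration of $Q$ over its $W$-marginal), and uniqueness both in law and as a function of $W$ (via uniqueness of disintegrations). Your reverse direction --- disintegrate $Q$, multiply by $d\widetilde{\mathcal{P}}/d\mathcal{P}$, and verify (I)--(III) of Definition~\ref{DefGMC} --- is sound, modulo routine null-set bookkeeping to obtain (III) for all $\phi$ simultaneously.

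The genuine gap is in the forward direction, at the step ``matching over all $\phi$ and $g$ yields $\Lambda=Q$.'' The functionals $e^{\langle W,\phi\rangle}g(p)$ are not a determining class in this setting, because nothing in Definition~\ref{DefGMC} makes $e^{(Y\phi)(p)}$ $\mu$-integrable for even a single $\phi\neq 0$: both sides of your matching identity may equal $+\infty$ for every $\phi\neq 0$, and two distinct finite measures can have identically infinite two-sided Laplace transforms (already on $\mathbb{R}$: two distinct Cauchy-type densities). You flag this issue, but the proposed repair does not work as stated: property (III) concerns genuine Cameron--Martin shifts of a real Gaussian field, so there is no shift ``in the direction $\mathrm{i}\phi$,'' and the analytic continuation from real to imaginary exponents would require finiteness of the real exponential moments in a neighborhood of $0$ --- exactly what is unavailable. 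A repair that stays within your framework is truncation: for fixed $\phi_1,\dots,\phi_k$ replace $g$ by $g_N:=g\,1_{\{\max_j|(Y\phi_j)(p)|\leq N\}}$; then the joint moments $\mathbb{E}\big[e^{\sum_j s_j\langle W,\phi_j\rangle}\int_\Gamma g_N\,d\mathbf{M}\big]$ are finite for all real $(s_1,\dots,s_k)$ (bounded by $e^{N\sum_j|s_j|}$ times constants), your computation shows they agree with the corresponding moments of $Q$, and everywhere-finite multivariate moment generating functions do determine the pushforward measures; letting $N\to\infty$ by monotone convergence (legitimate because $\mu$-null sets are a.s.\ $\mathbf{M}$-null, Remark~\ref{RemarkFun}) and then ranging over finite collections of directions gives $\Lambda=Q$ on cylinder sets, hence everywhere. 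Without this truncation, or some substitute such as conditioning on finite-dimensional projections of $W$ and passing to the limit by martingale convergence, the central identity --- and with it all three assertions of the theorem --- remains unproved.
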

The GMC associated to the randomized shift $Y$ can be denoted by $\mathbf{M}_Y$ although we drop the subscript when $Y$ is  implicitly understood. By~\cite[Corollary 20]{Shamov},  if $Y$ is a randomized shift, then the  covariance kernel $\mathbf{T}$ of field $(W,Y)$ must have an integral kernel $T(p,q)\in L^0(\Gamma\times \Gamma, \mu\times \mu)$.  Moreover, by~\cite[Lemma 34]{Shamov}, when $Y$ is a randomized shift and $\mathbf{M}$ is the associated  GMC with expectation $\mu$, 
\begin{align}\label{ExpectProd}
\mathbb{E}\big[ \mathbf{M}(dp)\mathbf{M}(dq)  \big]\,=\,e^{T(p,q)}\mu(dp)\mu(dq)\,
\end{align}
and, in particular, $\mathbb{E}\big[ \mathbf{M}\times \mathbf{M}  \big]$ is absolutely continuous with respect to $\mathbb{E}[ \mathbf{M}]\times \mathbb{E}[\mathbf{M}  \big]$.

The following convergence theorem is important for deriving a GMC  as a limit of GMCs $\mathbf{M}_{Y_n}$ associated to a sequence of randomized shifts $Y_n$ that converge strongly to a limit $Y$. 
\begin{theorem}[Theorem 25 of~\cite{Shamov}]\label{ThmShamovConv}  Let $Y_n$ be a sequence of randomized shifts and $\mathbf{M}_{Y_n}$ be the associated subcritical GMCs with expectation $\mu$. Consider the following statements.
\begin{enumerate}[(I)]
\item  The family of random variables $\big\{\mathbf{M}_{Y_n}(\Gamma)\big\}_{n\in \mathbb{N}}$ is uniformly integrable.   

\item $Y_n$ converges strongly to a generalized $\mathcal{H}$-valued function $Y:\mathcal{H}\rightarrow L^0(\Gamma,\mu)$, i.e.,  $\langle Y_n(p), \phi\rangle$ converges  to $\langle Y(p), \phi\rangle$ in $L^0(\Gamma,\mu)$ for every $\phi\in \mathcal{H}$.

\item  The kernels $T_n(p,q)=\langle Y_n(p), Y_n(q)\rangle $ converge in $L^0(\mu\times \mu)$ to $T(p,q)=\langle Y(p), Y(q)\rangle $.

\end{enumerate}
Statements (I) and (II) imply that $Y$ is a randomized shift, and thus $Y$ defines a subcritical GMC $\mathbf{M}_Y$.    The statements (I)-(III) imply that the sequence of GMCs $\mathbf{M}_{Y_n}$ converges to $\mathbf{M}_{Y}$ as $n\rightarrow \infty$ in the sense that for any $\psi\in L^1(\Gamma,\mu)$
$$ \int_{\Gamma}\psi(p) \mathbf{M}_{Y_n}(dp) \hspace{.4cm}\stackrel{L^1  }{\Longrightarrow}  \hspace{.4cm} \int_{\Gamma}\psi(p) \mathbf{M}_{Y}(dp) \,.  $$

\end{theorem}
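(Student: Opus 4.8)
The plan is to reduce every claim to a statement about Radon–Nikodym derivatives with respect to the fixed Gaussian law $\mathcal{P}$ of the field $W$, and to analyze these densities through their characteristic functionals. Fix $\psi\in L^1(\Gamma,\mu)$; splitting into positive and negative parts I may assume $\psi\ge 0$, and I would first treat bounded $\psi$. Put $Q_n(dW,dp)=\mathcal{P}_{Y_n(p)}(dW)\,\mu(dp)$ and $Q_n^{\psi}(dW)=\int_{\Gamma}\psi(p)\,Q_n(dW,dp)$. The defining identity~(\ref{Identity}) reads $Q_n(dW,dp)=\mathbf{M}_{Y_n}(W,dp)\,\mathcal{P}(dW)$, so the random variable $\mathbf{M}_{Y_n}(\psi):=\int_{\Gamma}\psi(p)\,\mathbf{M}_{Y_n}(W,dp)$ is exactly the density $dQ_n^{\psi}/d\mathcal{P}$, and property (I) of Definition~\ref{DefGMC} gives the normalization $\mathbb{E}[\mathbf{M}_{Y_n}(\psi)]=\int_{\Gamma}\psi\,d\mu$ for every $n$.

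The first step is a characteristic-functional computation. For $\eta\in\mathcal{H}$, writing the shift as a pushforward gives $\int e^{\I\langle W,\eta\rangle}\mathcal{P}_{Y_n(p)}(dW)=e^{\I (Y_n\eta)(p)-\frac12\|\eta\|^2}$, where $(Y_n\eta)(p)=\langle Y_n(p),\eta\rangle$ is a genuine element of $L^0(\Gamma,\mu)$ even when $Y_n(p)\notin\mathcal{H}$; integrating against $\psi$ yields
\[
\mathbb{E}\big[e^{\I\langle W,\eta\rangle}\,\mathbf{M}_{Y_n}(\psi)\big]\,=\,e^{-\frac12\|\eta\|^2}\int_{\Gamma} e^{\I (Y_n\eta)(p)}\,\psi(p)\,\mu(dp)\,.
\]
By hypothesis (II), $Y_n\eta\to Y\eta$ in $L^0(\Gamma,\mu)$, i.e. in $\mu$-measure; since $|e^{\I (Y_n\eta)}\psi|\le|\psi|\in L^1(\mu)$, the dominated convergence theorem in its Vitali form shows the right-hand side converges to $L_{\psi}(\eta):=e^{-\frac12\|\eta\|^2}\int_{\Gamma} e^{\I (Y\eta)(p)}\,\psi(p)\,\mu(dp)$. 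The point of this step is that it is insensitive to whether the individual shifts $\mathcal{P}_{Y_n(p)}$ are absolutely continuous.

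Next I would establish existence of $\mathbf{M}_Y$ together with weak $L^1$ convergence. Taking $\psi\equiv 1$, the family $\{\mathbf{M}_{Y_n}(\Gamma)\}_n$ is uniformly integrable by (I), hence relatively weakly compact in $L^1(\mathcal{P})$ by the Dunford–Pettis theorem. Any weak subsequential limit $g$ defines a finite measure $g\,\mathcal{P}$ whose characteristic functional equals $\lim_k\mathbb{E}[e^{\I\langle W,\eta\rangle}\mathbf{M}_{Y_{n_k}}(\Gamma)]=L_1(\eta)$, which is also the characteristic functional of $\widetilde{\mathcal{P}}:=\int_{\Gamma}\mathcal{P}_{Y(p)}\,\mu(dp)$. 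Letting $\eta$ range over finite linear combinations of the basis vectors recovers every finite-dimensional marginal, so uniqueness of characteristic functionals on $\R^{\mathbb{N}}$ forces $\widetilde{\mathcal{P}}=g\,\mathcal{P}\ll\mathcal{P}$: thus $Y$ is a randomized shift and, by Theorem~\ref{ThmShamov}, $\mathbf{M}_Y$ exists and is unique. Since the weak limit is thereby identified uniquely, the whole sequence converges, $\mathbf{M}_{Y_n}(\Gamma)\rightharpoonup\mathbf{M}_Y(\Gamma)$. For bounded $\psi\ge 0$ the bound $0\le\mathbf{M}_{Y_n}(\psi)\le\|\psi\|_{\infty}\mathbf{M}_{Y_n}(\Gamma)$ transfers uniform integrability, and the same computation identifies every weak limit as $\mathbf{M}_Y(\psi)$, giving $\mathbf{M}_{Y_n}(\psi)\rightharpoonup\mathbf{M}_Y(\psi)$.

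The decisive and hardest step is upgrading this weak $L^1$ convergence to strong $L^1$ convergence, and it is here that hypothesis (III) is indispensable. Because $\mathbb{E}[\mathbf{M}_{Y_n}(\psi)]=\int\psi\,d\mu=\mathbb{E}[\mathbf{M}_Y(\psi)]$ and both random variables are nonnegative, one has $\mathbb{E}\,|\mathbf{M}_{Y_n}(\psi)-\mathbf{M}_Y(\psi)|=2\,\mathbb{E}\big[(\mathbf{M}_{Y_n}(\psi)-\mathbf{M}_Y(\psi))^{+}\big]$, so by Vitali's theorem (the family being uniformly integrable by (I)) it suffices to prove convergence in $\mathcal{P}$-probability. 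The mechanism is the cross-correlation identity $\mathbb{E}[\mathbf{M}_{Y_n}(dp)\,\mathbf{M}_{Y_m}(dq)]=e^{\langle Y_n(p),Y_m(q)\rangle}\mu(dp)\mu(dq)$, obtained by the same Cameron–Martin computation that gives~(\ref{ExpectProd}) applied to the two shifts driven by the common field $W$; combined with (II)–(III) it would make $\mathbb{E}[\mathbf{M}_{Y_n}(\psi)\mathbf{M}_{Y_m}(\psi)]\to\int_{\Gamma\times\Gamma}\psi(p)\psi(q)e^{T(p,q)}\mu(dp)\mu(dq)$, so that $\{\mathbf{M}_{Y_n}(\psi)\}$ is Cauchy in $L^2(\mathcal{P})$ whenever this correlation integral is finite. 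The genuine obstacle is that these second moments may be infinite, so the Hilbert-space argument cannot be run globally; the remedy I would pursue is to restrict to the sublevel set $\{T\le K\}$, on which (III) (convergence $T_n\to T$ in $L^0(\mu\times\mu)$) yields uniform integrability of $e^{T_n}$ and hence the $L^2$-Cauchy conclusion, while controlling the complementary contribution uniformly in $n$ by the uniform integrability (I), then letting $K\to\infty$ in an $\varepsilon$-argument to extract convergence in probability. Extending from bounded $\psi$ to arbitrary $\psi\in L^1(\Gamma,\mu)$ is then routine via (I) and Remark~\ref{RemarkFun}. The conceptual reason (III) must be assumed separately is exactly the non-reflexivity of $L^1$ together with possibly infinite variance: (I)–(II) alone furnish only weak $L^1$ convergence, which can mask oscillation of the densities, and it is precisely the convergence of the correlation kernels that excludes it.
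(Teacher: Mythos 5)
Your proposal cannot be compared against an in-paper argument, because the paper does not prove this statement: it is imported verbatim from \cite{Shamov} (Theorem 25 there), and the paper only ever applies it, in Appendix~\ref{SecZ}, in the regime $\int_{\Gamma\times\Gamma}e^{T(p,q)}\mu(dp)\mu(dq)<\infty$ where second moments are finite. Judged on its own terms, the first half of your argument is correct: identifying $\mathbf{M}_{Y_n}(\psi)$ with the density $dQ_n^{\psi}/d\mathcal{P}$ via~(\ref{Identity}), computing characteristic functionals, extracting a weak $L^1$ limit by Dunford--Pettis, and invoking uniqueness of characteristic functionals on $\R^{\mathbb{N}}$ does show that (I)+(II) force $\widetilde{\mathcal{P}}=g\,\mathcal{P}\ll\mathcal{P}$, so that $Y$ is a randomized shift, and that $\mathbf{M}_{Y_n}(\psi)\rightharpoonup\mathbf{M}_Y(\psi)$ weakly in $L^1$ for bounded $\psi$ (one detail worth recording: a weak $L^1$ limit of $\sigma(W)$-measurable variables can be taken $\sigma(W)$-measurable, which you need to identify $g$ with $\mathbf{M}_Y(\Gamma)$). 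Your reduction of strong $L^1$ convergence to convergence in probability via uniform integrability, and the extension from bounded to general $\psi\in L^1(\Gamma,\mu)$, are also sound.

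The gap sits exactly at the decisive step you flag, and it is twofold. First, your $L^2$-Cauchy mechanism needs the cross-kernels $\langle Y_n(p),Y_m(q)\rangle$ to be well defined and to converge to $T(p,q)$ as $n,m\to\infty$ jointly. Hypothesis (III) controls only the same-index kernels $T_n$; hypothesis (II) only lets you pair $Y_n(p)$ against fixed elements of $\mathcal{H}$, never against the generalized vectors $Y_m(q)$; and because (III) is an off-diagonal statement in $\mu\times\mu$-measure, the diagonal quantities $\|Y_n(p)\|^2$ that would drive a ``weak convergence plus norm convergence implies strong convergence'' argument are invisible to it. What you actually need is (III) for the differences $Y_n-Y_m$, which does not follow from (I)--(III) by any routine argument and is nowhere established. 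Second, the assertion that on $\{T\le K\}$ hypothesis (III) yields uniform integrability of $e^{T_n}$ is false as stated: take $Y_n=Y+Z_n$ where $Y$ is a trivial shift with bounded kernel supported on a subspace $\mathcal{H}_0$, and $Z_n$ is built on blocks of basis vectors orthogonal to $\mathcal{H}_0$ (and to each other) so that $\langle Z_n(p),Z_n(q)\rangle=F_n(p-q)$ is a Fej\'er-type positive semidefinite approximate-identity kernel. Then (II) holds, $T_n=T+F_n\to T$ almost everywhere and hence in measure, yet $\int_{\Gamma\times\Gamma}e^{T_n}\,d\mu\,d\mu\to\infty$: convergence in measure simply does not exclude spikes of unbounded exponential mass on sets of vanishing measure. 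You reserve (I) for the ``complementary contribution,'' so nothing in your argument repairs this.

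Relatedly, ``restrict to the sublevel set $\{T\le K\}$'' is not an operation on the random measures: $T$ lives on $\Gamma\times\Gamma$ while the $\mathbf{M}_{Y_n}$ live on $\Gamma$, so the localization must be executed inside a second-moment computation whose finiteness is precisely what is in doubt, and ``controlling the complementary contribution uniformly in $n$ by (I)'' is a hope rather than an estimate--- uniform integrability of the masses does not bound any second moment. Since the theorem is stated without finiteness of $\int e^{T}d\mu\,d\mu$---and must cover the regime where it is infinite, which is the interesting part of the subcritical phase---no globally $L^2$-based scheme can close it; any correct proof (including Shamov's) must exploit (III) through a different mechanism. Your closing diagnosis of why (III) is indispensable is accurate (indeed $Y_n=Y+e_n$, with $(e_n)$ orthonormal, satisfies (I)--(II), fails (III), and gives $\mathbf{M}_{Y_n}$ converging weakly but not strongly in $L^1$), but the proof that (I)--(III) are sufficient is missing at exactly the point where it would have to do its work.
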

Typically Theorem~\ref{ThmShamovConv} would be applied with $Y$ being a nontrivial generalized $\mathcal{H}$-valued function  and the approximating operators $Y_n$ being $\mathcal{H}$-valued functions, which are trivial randomized shifts. The following is a corollary of Theorem~\ref{ThmShamovConv} that  is not explicitly stated in~\cite{Shamov}. 
\begin{corollary}\label{CorApprox} If $Y:\mathcal{H}\rightarrow L^2(\Gamma,\mu)$ is a bounded linear map for which the operator $YY^*$ is Hilbert-Schmidt with integral kernel $T(p,q)$ satisfying $  \int_{\Gamma\times \Gamma}\textup{exp}\big\{T(p,q)\big\}\mu(dp)\mu(dq)\,<\,\infty  $,
then $Y$ is a randomized shift.
\end{corollary}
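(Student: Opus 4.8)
The plan is to realize $Y$ as a strong limit of finite-rank (hence genuinely $\mathcal H$-valued) maps $Y_n$ and then invoke Theorem~\ref{ThmShamovConv}. Since $YY^*$ is a positive, self-adjoint, Hilbert--Schmidt operator on $L^2(\Gamma,\mu)$, the spectral theorem supplies an orthonormal family $(e_k)_{k\in\mathbb{N}}$ in $L^2(\Gamma,\mu)$ and eigenvalues $\lambda_k\geq 0$ with $\sum_k\lambda_k^2<\infty$ such that $T(p,q)=\sum_k\lambda_k e_k(p)e_k(q)$ in $L^2(\mu\times\mu)$. Setting $f_k:=\lambda_k^{-1/2}Y^*e_k$ for $\lambda_k>0$ produces an orthonormal system in $\mathcal H$ with $\langle Y(p),f_k\rangle=\sqrt{\lambda_k}\,e_k(p)$, and I define the finite-rank operator $Y_n\phi:=\sum_{k\leq n}\sqrt{\lambda_k}\,\langle\phi,f_k\rangle\,e_k$, so that $Y_n(p)=\sum_{k\leq n}\sqrt{\lambda_k}\,e_k(p)\,f_k\in\mathcal H$ for $\mu$-a.e.\ $p$. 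Each $Y_n$ is an honest $\mathcal H$-valued function, hence a trivial randomized shift carrying an associated subcritical GMC $\mathbf{M}_{Y_n}$.

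Next I verify the hypotheses of Theorem~\ref{ThmShamovConv}. Because the finite-rank truncations of a Hilbert--Schmidt operator converge in Hilbert--Schmidt norm, $Y_n\phi\to Y\phi$ in $L^2(\Gamma,\mu)$ for every $\phi\in\mathcal H$, and the kernels $T_n(p,q):=\langle Y_n(p),Y_n(q)\rangle=\sum_{k\leq n}\lambda_k e_k(p)e_k(q)$ converge to $T$ in $L^2(\mu\times\mu)$; since $\mu$ is finite, $L^2$ convergence implies convergence in measure, which delivers statements (II) and (III) of Theorem~\ref{ThmShamovConv}. The remaining and decisive requirement is the uniform integrability (I) of the masses $\{\mathbf{M}_{Y_n}(\Gamma)\}$, which I intend to extract from a uniform second-moment bound. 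A direct Gaussian computation, using that $\langle W,Y_n(p)\rangle$ is centered Gaussian with $\mathbb{E}[\langle W,Y_n(p)\rangle\langle W,Y_n(q)\rangle]=T_n(p,q)$, yields
\begin{align*}
\mathbb{E}\big[\mathbf{M}_{Y_n}(\Gamma)^2\big]\,=\,\int_{\Gamma\times\Gamma}e^{T_n(p,q)}\,\mu(dp)\mu(dq)\,.
\end{align*}
Thus it suffices to show $\sup_n\int_{\Gamma\times\Gamma}e^{T_n}\,\mu(dp)\mu(dq)=\int_{\Gamma\times\Gamma}e^{T}\,\mu(dp)\mu(dq)$, which is finite by hypothesis; boundedness in $L^2$ then forces uniform integrability and we are done.

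This last identity is where the essential work lies, and it is the step I expect to be the main obstacle, precisely because $T_n\to T$ only in $L^2$ and \emph{not} monotonically pointwise: the off-diagonal values $T_n(p,q)$ can exceed $T(p,q)$, so $e^{T}\geq e^{T_n}$ fails as a pointwise bound. Two observations drive the argument. First, for fixed $p$ the integrand $\exp\{\langle W,Y_n(p)\rangle-\tfrac12\|Y_n(p)\|^2\}$ is an exponential martingale in $n$ relative to the filtration generated by $\langle W,f_1\rangle,\langle W,f_2\rangle,\dots$; integrating over $p$ shows that $(\mathbf{M}_{Y_n}(\Gamma))_n$ is a nonnegative martingale, so $n\mapsto\int_{\Gamma\times\Gamma}e^{T_n}$ is nondecreasing and Fatou gives $\int_{\Gamma\times\Gamma}e^{T}\leq\lim_n\int_{\Gamma\times\Gamma}e^{T_n}$. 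Second, for the reverse inequality I expand the exponential and integrate termwise, the crucial point being that, after integrating $p$ and $q$ separately, each moment factorizes into manifestly nonnegative contributions,
\begin{align*}
\int_{\Gamma\times\Gamma}T_n(p,q)^m\,\mu(dp)\mu(dq)\,=\,\sum_{k_1,\dots,k_m\leq n}\Big(\prod_{j=1}^m\lambda_{k_j}\Big)\Big(\int_{\Gamma}\prod_{j=1}^m e_{k_j}\,d\mu\Big)^2\,.
\end{align*}

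Passing from the index range $\{1,\dots,n\}$ to all of $\mathbb{N}$ only adds nonnegative terms, so $\int_{\Gamma\times\Gamma}T_n^m\leq\int_{\Gamma\times\Gamma}T^m$ for every $m$, and summing against $1/m!$ gives $\int_{\Gamma\times\Gamma}e^{T_n}\leq\int_{\Gamma\times\Gamma}e^{T}$. The nonnegativity of the $\lambda_k$ together with the square in the second factor are exactly what render these termwise comparisons valid and force the use of the Hilbert--Schmidt hypothesis through the eigendecomposition. The principal technical care is in justifying the interchange of summation and integration that legitimizes the displayed moment expansion (the summands $T_n^m$ are sign-indefinite pointwise); I expect to secure this by an auxiliary truncation of $T$ to bounded positive-definite approximants, for which the expansion is unconditional, followed by a monotone passage to the limit. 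With the uniform estimate $\sup_n\mathbb{E}[\mathbf{M}_{Y_n}(\Gamma)^2]=\int_{\Gamma\times\Gamma}e^{T}\,\mu(dp)\mu(dq)<\infty$ established, Theorem~\ref{ThmShamovConv} certifies that $Y$ is a randomized shift, which is the assertion of the corollary.
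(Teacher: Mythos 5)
Your proposal follows the paper's skeleton at the top level---approximate $Y$ by finite-rank operators $Y_n$, which are trivial randomized shifts, and feed conditions (I)--(II) into Theorem~\ref{ThmShamovConv}---but it diverges at the one step that carries all the weight, and there the argument has a genuine gap. You choose spectral truncations $Y_n$, and you correctly identify that everything reduces to the uniform bound $\sup_n\int_{\Gamma\times\Gamma}e^{T_n}\,d\mu\,d\mu\leq\int_{\Gamma\times\Gamma}e^{T}\,d\mu\,d\mu$; but the moment-expansion route you sketch for this bound does not work. First, the expansion is built from quantities that need not be defined: the eigenfunctions $e_k$ lie only in $L^2(\Gamma,\mu)$, so for $m\geq 3$ the coefficients $\int_\Gamma\prod_{j=1}^m e_{k_j}\,d\mu$ and the moments $\int|T_n|^m\,d\mu\,d\mu$, $\int|T|^m\,d\mu\,d\mu$ can be infinite or undefined, and splitting $\int T_n^m$ into the displayed sum is only legitimate when each summand is finite. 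Second, $T$ is positive semi-definite as a kernel but need not be pointwise nonnegative, and the hypothesis $\int e^{T}<\infty$ gives no control on $\int e^{|T|}$; hence even granting every termwise inequality $\int T_n^m\leq\int T^m$, there is no dominated-convergence argument identifying $\sum_m\frac{1}{m!}\int T^m$ with $\int e^{T}$, so the termwise comparisons cannot be summed to the inequality you need. Your proposed repair---truncating $T$ to bounded positive-definite approximants and passing monotonically to the limit---is not available: capping an unbounded kernel destroys positive-definiteness, and there is no monotone structure to pass through (this is the same failure of pointwise monotonicity you yourself flag). What you do establish (the martingale property of $\mathbf{M}_{Y_n}(\Gamma)$, monotonicity of $\int e^{T_n}$, and the Fatou inequality $\int e^{T}\leq\lim_n\int e^{T_n}$) is the direction that is not needed; whether the reverse inequality is even true for spectral truncations under only the stated hypotheses is unclear, and in any case your argument does not prove it.

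The paper closes exactly this gap by a different choice of finite-rank approximation, and that choice is the whole content of its proof: realize $(\Gamma,\mathcal{B}_{\Gamma})$ as $\{0,1\}^{\mathbb{N}}$ with a filtration $(\mathcal{F}_n)$ of finite $\sigma$-algebras generating $\mathcal{B}_{\Gamma}$, let $P_n$ be the orthogonal projection given by conditional expectation $\mathrm{E}_{\mu}[\,\cdot\,|\,\mathcal{F}_n]$, and set $Y_n:=P_nY$. Then $Y_n$ is finite rank, $Y_n\to Y$ strongly since $P_n\to I$ strongly, and---the key point---the kernel of $Y_nY_n^*$ is $T_n=\mathrm{E}_{\mu\times\mu}[T\,|\,\mathcal{F}_n\otimes\mathcal{F}_n]$, a conditional expectation of $T$ itself. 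Jensen's inequality then gives $e^{T_n}\leq\mathrm{E}_{\mu\times\mu}[e^{T}\,|\,\mathcal{F}_n\otimes\mathcal{F}_n]$ a.e., hence $\int e^{T_n}\,d\mu\,d\mu\leq\int e^{T}\,d\mu\,d\mu$ in one line, with no sign assumption on $T$, no moment expansion, and no interchange of limits. If you replace your spectral truncations by these martingale truncations, the rest of your argument goes through essentially verbatim. A minor additional slip in your write-up: you justify $Y_n\to Y$ by citing Hilbert--Schmidt convergence of finite-rank truncations, but $Y$ itself need not be Hilbert--Schmidt (only $YY^*$ is; in the paper's application $YY^*$ is Hilbert--Schmidt and not trace class, so $\sum_k\lambda_k=\infty$); strong convergence of your $Y_n$ does hold, but for the elementary reason that $\|(Y-Y_n)\phi\|^2=\sum_{k>n}\lambda_k|\langle\phi,f_k\rangle|^2\to 0$ for each fixed $\phi$.
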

  The integral $\int_{\Gamma\times \Gamma}\textup{exp}\big\{T(p,q)\big\}\mu(dp)\mu(dq)$ is the second moment of the total mass of a GMC associated to $Y$ with expectation $\mu$.  More generally, we have Kahane's moment formula~(\ref{Formula}) for the higher moments of the random variable $\mathbf{M}(A)$ for a measurable set $A\subset \Gamma$; see~\cite[part (d) of Theorem 6]{Kahane}.
\begin{proposition}\label{PropGMCmoments} Let $Y:\mathcal{H}\rightarrow L^2(\Gamma,\mu)$ be a bounded linear map for which the operator $YY^*$ is Hilbert-Schmidt with integral kernel $T(p,q)$ having finite exponential moments with respect to $\mu\times \mu$. Let $\mathbf{M}$ be the GMC associated to $Y$ with expectation $\mu$.   For any measurable set $A\subset \Gamma$, the positive integer moments of the random variable $\mathbf{M}(A) $ are finite and have the form
\begin{align}\label{Formula}
 \mathbb{E}\left[ \big(\mathbf{M}(A)  \big)^m  \right]\,=\,\int_{A^m}\exp\Bigg\{ \sum_{1\leq i < j \leq m  }T(p_i, p_j)  \Bigg\}\mu(dp_1)\cdots  \mu(dp_m)  \,.  
 \end{align}

\end{proposition}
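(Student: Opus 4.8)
The plan is to follow Kahane's original argument: establish the formula first for finite-rank $\mathcal{H}$-valued approximants, where it reduces to a Gaussian Laplace-transform computation, and then pass to the limit using the randomized-shift convergence theorem. Since $YY^*$ is a positive, self-adjoint, Hilbert--Schmidt operator on $L^2(\Gamma,\mu)$, a Mercer-type spectral expansion gives $T(p,q)=\sum_{k\geq 1}\psi_k(p)\psi_k(q)$ with $(\psi_k)\subset L^2(\Gamma,\mu)$ and convergence in $L^2(\mu\times\mu)$. Fixing an orthonormal sequence $(f_k)$ in $\mathcal{H}$ and a Gaussian field $W$ with $g_k:=\langle W,f_k\rangle$ i.i.d.\ standard normal, I set $Y_n(p):=\sum_{k=1}^n\psi_k(p)f_k\in\mathcal{H}$, an honest $\mathcal{H}$-valued function and hence a (trivial) randomized shift with explicit GMC
\begin{equation*}
\mathbf{M}_n(W,dp)\,=\,e^{\langle W,Y_n(p)\rangle-\frac12\|Y_n(p)\|^2_{\mathcal{H}}}\mu(dp),\qquad T_n(p,q):=\langle Y_n(p),Y_n(q)\rangle=\sum_{k=1}^n\psi_k(p)\psi_k(q).
\end{equation*}

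The core computation is then immediate. For measurable $A$, Tonelli's theorem together with the fact that $\langle W,\sum_{i=1}^m Y_n(p_i)\rangle$ is a centered Gaussian of variance $\|\sum_{i=1}^m Y_n(p_i)\|^2_{\mathcal{H}}$ give
\begin{equation*}
\mathbb{E}\big[\mathbf{M}_n(A)^m\big]=\int_{A^m}e^{-\frac12\sum_i\|Y_n(p_i)\|^2}\,\mathbb{E}\Big[e^{\langle W,\sum_i Y_n(p_i)\rangle}\Big]\,d\mu^{\otimes m}=\int_{A^m}\exp\Big\{\sum_{1\le i<j\le m}T_n(p_i,p_j)\Big\}\,d\mu^{\otimes m},
\end{equation*}
where the last equality uses $\|\sum_i Y_n(p_i)\|^2-\sum_i\|Y_n(p_i)\|^2=2\sum_{i<j}T_n(p_i,p_j)$. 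This is precisely~(\ref{Formula}) with $T$ replaced by the truncation $T_n$.

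It remains to pass to the limit $n\to\infty$. Because $Y_{n+1}(p)=Y_n(p)+\psi_{n+1}(p)f_{n+1}$ and $g_{n+1}$ is independent of $\mathcal{F}_n:=\sigma(g_1,\dots,g_n)$, one checks that $\mathbf{M}_n(A)$ is a nonnegative $(\mathcal{F}_n)$-martingale with $\mathbb{E}[\mathbf{M}_n(A)]=\mu(A)$, so it converges a.s.; and since $T_n\to T$ in $L^2(\mu\times\mu)\subset L^0$ while $\sup_n\mathbb{E}[\mathbf{M}_n(\Gamma)^2]=\sup_n\int_{\Gamma^2}e^{T_n}\,d\mu^{\otimes 2}$ is finite, Theorem~\ref{ThmShamovConv} applies and identifies the a.s.\ limit with the GMC $\mathbf{M}$ of $Y$, whose existence is guaranteed by Corollary~\ref{CorApprox}. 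By conditional Jensen the moment sequence $a_n:=\mathbb{E}[\mathbf{M}_n(A)^m]$ is nondecreasing, and Fatou's lemma applied along an a.e.-convergent subsequence of the integrands yields $\int_{A^m}e^{\sum_{i<j}T}\,d\mu^{\otimes m}\le\lim_n a_n$.

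The main obstacle is the matching upper bound, i.e.\ ruling out a loss of mass in the limit; the difficulty is that the truncations are not monotone off-diagonal, so $e^{\sum_{i<j}T_n}$ is not dominated pointwise by $e^{\sum_{i<j}T}$. I would resolve this using Kahane's convexity inequality from~\cite{Kahane}: since $T-T_n=\sum_{k>n}\psi_k(\cdot)\psi_k(\cdot)$ is a positive-definite kernel, comparing the Gaussian fields of covariance $T_n$ and $T$ against the convex map $x\mapsto x^m$ gives $a_n\le\int_{A^m}e^{\sum_{i<j}T}\,d\mu^{\otimes m}$, whence $\lim_n a_n=\int_{A^m}e^{\sum_{i<j}T}\,d\mu^{\otimes m}$, which the exponential-moment hypothesis on $T$ renders finite. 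The same bound with $m+1$ points shows $\sup_n\mathbb{E}[\mathbf{M}_n(A)^{m+1}]<\infty$, giving uniform integrability of $\{\mathbf{M}_n(A)^m\}$; combined with the a.s.\ convergence $\mathbf{M}_n(A)\to\mathbf{M}(A)$ this upgrades the Fatou bound to the equality $\mathbb{E}[\mathbf{M}(A)^m]=\lim_n a_n$, which is exactly~(\ref{Formula}). The instance $m=2$ recovers~(\ref{ExpectProd}).
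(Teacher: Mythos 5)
Your overall strategy---finite-rank approximation, explicit Gaussian computation of the approximate moments, Shamov's convergence theorem to identify the limit GMC, and Fatou for the lower bound---is the same skeleton as the paper's proof, and you correctly isolated the crux: since the spectral truncations $T_n$ are not dominated by $T$ off the diagonal, everything hinges on the uniform upper bound $a_n=\int_{A^m}\exp\{\sum_{i<j}T_n\}\,d\mu^{\otimes m}\le\int_{A^m}\exp\{\sum_{i<j}T\}\,d\mu^{\otimes m}$. The problem is that your proposed resolution of this obstacle is circular. Kahane's convexity inequality compares expectations of convex functionals of \emph{two GMC measures} whose fields have ordered covariances; applied to your pair it would yield $a_n\le\mathbb{E}\big[(\mathbf{M}(A))^m\big]$, not $a_n\le\int_{A^m}e^{\sum_{i<j}T}\,d\mu^{\otimes m}$, and converting the former into the latter requires already knowing that $\mathbb{E}\big[(\mathbf{M}(A))^m\big]$ equals the integral expression---which is precisely the formula being proved. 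Worse, the field with covariance $T$ is only a generalized field (here $YY^*$ is Hilbert--Schmidt but typically not trace class, so $T(p,p)=\sum_k\psi_k(p)^2=\infty$ on a set of positive measure), so even stating Kahane's inequality for it requires approximating by pointwise-defined fields; the only approximants at hand are the $T_N$, and finite-rank Kahane then gives only the monotonicity $a_n\le a_N$, which you already have from conditional Jensen and which does not produce the upper bound. Nor can you prove the needed ``analytic'' ordering by expanding $\prod_{i<j}e^{(T-T_n)(p_i,p_j)}$: integrals of graph-indexed products of positive-definite kernels can be negative once $m\ge 3$ (already for rank one, $S=\psi\otimes\psi$, a star of three edges contributes $(\int_A\psi^3)(\int_A\psi)^3$, which can be negative). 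The same unproven domination is hiding in your earlier assertion that $\sup_n\int_{\Gamma^2}e^{T_n}\,d\mu^{\otimes 2}<\infty$; for $m=2$ this can be rescued because $e^{T_n}\big(e^{T-T_n}-1\big)$ is a positive-definite kernel by the Schur product theorem and its integral against $1\otimes 1$ is nonnegative, but that trick does not extend to the complete graph on $m\ge3$ vertices. (A separate, minor point: for Theorem~\ref{ThmShamovConv}(II) to identify the limit with the GMC of $Y$ itself, your $(f_k)$ must be the right singular vectors of $Y$, not an arbitrary orthonormal sequence.)

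The paper makes the difficulty disappear by choosing a different approximation: martingale truncation rather than spectral truncation, i.e.\ $Y_n=P_nY$ and $T_n=\mathrm{E}_{\mu}\big[T\,\big|\,\mathcal{F}_n\otimes\mathcal{F}_n\big]$ for a filtration $(\mathcal{F}_n)$ generating $\mathcal{B}_\Gamma$. With that choice conditional Jensen gives $\exp\{\sum_{i<j}T_n(p_i,p_j)\}\le\mathrm{E}_{\mu^{\otimes m}}\big[\exp\{\sum_{i<j}T(p_i,p_j)\}\,\big|\,\mathcal{F}_n^{\otimes m}\big]$, a uniformly integrable martingale: this supplies exactly the domination your approach lacks, bounds all approximate moments by the finite target integral, yields the uniform integrability, and lets Fatou/martingale convergence upgrade the bound to the equality~(\ref{Formula}). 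If you wish to keep the spectral decomposition you must supply an independent proof of the ordering of the integrals (e.g.\ a genuine extension of Kahane's inequality to generalized fields, itself a nontrivial argument); switching to conditional-expectation truncations is the cleaner repair.
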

We include the proofs of Corollary~\ref{CorApprox} and Proposition~\ref{PropGMCmoments} in Appendix~\ref{SecZ}.

\subsection{Extending the GMC definition to a random reference measure}\label{SecGMCExt}

We now generalize Definition~\ref{DefGMC} to the case where  $\mu$ is replaced by a random finite measure $M$ and the coupling $Y\equiv Y_M$ is a function of $M$. As before, $(\Gamma,\mathcal{B}_{\Gamma})$ denotes a standard Borel measurable space.

\begin{definition}\label{DefCGMC}  Let $(\Gamma,M)$ be a random finite  measure  and    $W$ be a  Gaussian standard random vector in $\mathcal{H}$ independent of $M$, in other terms, for which the random variables in $\textup{Range}(W)\subset L^2(\Omega,\mathscr{F}, \mathbb{P} )$ are jointly independent of $M$. Moreover, let $Y_M:\mathcal{H}\rightarrow L^{0}(\Gamma,M)$ be a continuous linear map depending measurably on $M$.  A  \textit{conditional GMC}  over $(W,Y_M)$ with conditional expectation $M$ is a random finite measure $\mathbf{M}$ on $\Gamma$ satisfying (I)-(III) below. 
\begin{enumerate}[(I)]

\item  $\mathbb{E}[ \mathbf{M}\,|\,M]=M $, i.e., $\mathbb{E}[ \mathbf{M}(A)\,|\,M]= M(A)$  for any measurable set $A\subset \Gamma$.

\item $\mathbf{M}$ is measurable with respect to the $\sigma$-algebra generated by the random measure $M$ and the Gaussian field $W$, and thus $\mathbf{M}$ is a function of the pair $(M,W)$.

\item  For $\phi\in \mathcal{H}$ and  a.e.\ realization of the pair $(M,W)$, there is the equality between measures
$$ \mathbf{M}\big(M, W+\phi, dp\big)\,=\,e^{ \langle Y_M(p), \phi \rangle  }\mathbf{M}\big(M,W, dp\big)   \,. $$

\end{enumerate}

\end{definition}
The following is an immediate consequence of Theorem~\ref{ThmShamov}.
\begin{corollary}\label{CorShift} Let  $Y_M:\mathcal{H}\rightarrow L^{0}(\Gamma,M)$ and $W:\mathcal{H}\rightarrow L^2(\Omega,\mathscr{F}, \mathbb{P} )$ be as in Definition~\ref{DefCGMC}.  There is a unique conditional GMC over $(W,Y_M)$ with conditional expectation $M$   if and only if  $Y_M$ is a randomized shift for a.e.\ realization of $M$.
\end{corollary}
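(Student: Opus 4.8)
The plan is to prove both implications by freezing a realization $m$ of the random measure $M$ and applying the deterministic existence-and-uniqueness result, Theorem~\ref{ThmShamov}, to the pair $(W,Y_m)$ over the now-deterministic space $(\Gamma,m)$. The independence of $W$ and $M$ is what makes this conditioning transparent: because the Gaussian vector $W$ is jointly independent of $M$, the conditional law of $W$ given $\{M=m\}$ is again that of a standard Gaussian random vector in $\mathcal{H}$, so the hypotheses of Theorem~\ref{ThmShamov} are met verbatim for a.e.\ realization $m$, and moreover a Cameron--Martin shift $W\mapsto W+\phi$ leaves $M$ undisturbed.

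For the ``only if'' direction, suppose a conditional GMC $\mathbf{M}$ over $(W,Y_M)$ exists and read properties (I)--(III) of Definition~\ref{DefCGMC} conditionally on $\{M=m\}$. Property (I) becomes $\mathbb{E}[\mathbf{M}(m,W,\cdot)]=m$ (expectation over $W$ alone, using independence); property (II) says $\mathbf{M}(m,W,\cdot)$ is a measurable function of $W$; and property (III) is exactly the Cameron--Martin identity of Definition~\ref{DefGMC}(III) with coupling $Y_m$. Thus $\mathbf{M}(m,W,\cdot)$ is a subcritical GMC over $(W,Y_m)$ with expectation $m$, and the existence half of Theorem~\ref{ThmShamov} forces $Y_m$ to be a randomized shift. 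Since this holds for $M$-a.e.\ $m$, $Y_M$ is a randomized shift for a.e.\ realization of $M$.

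For the ``if'' direction, suppose $Y_M$ is a randomized shift for a.e.\ $m$. Theorem~\ref{ThmShamov} then furnishes, for each such $m$, a subcritical GMC $\mathbf{M}_{Y_m}$ over $(W,Y_m)$ that is unique in law and unique as a measurable function of $W$. Setting $\mathbf{M}:=\mathbf{M}_{Y_M}(W,\cdot)$ gives the candidate conditional GMC: properties (II) and (III) of Definition~\ref{DefCGMC} hold because they hold pathwise for each frozen $m$, while property (I) follows by integrating the unconditional identity $\mathbb{E}[\mathbf{M}_{Y_m}]=m$ against the law of $M$ and using independence to identify $\mathbb{E}[\mathbf{M}\,|\,M]=M$. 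Uniqueness of the conditional GMC descends directly from the pathwise uniqueness in Theorem~\ref{ThmShamov}.

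The main obstacle is joint measurability. To define $\mathbf{M}:=\mathbf{M}_{Y_M}(W,\cdot)$ as a genuine random measure and to verify property (II)---measurability with respect to the $\sigma$-algebra generated by $(M,W)$---I need the map $(m,w)\mapsto \mathbf{M}_{Y_m}(w,\cdot)$ to be \emph{jointly} measurable, whereas Theorem~\ref{ThmShamov} only supplies measurability in $w$ for each fixed $m$. I would close this gap with a measurable-selection argument: since $Y_M$ depends measurably on $M$, one can choose finite-rank ($\mathcal{H}$-valued) approximations $Y_m^{(n)}$ depending measurably on $m$, so that each approximating GMC $\mathbf{M}_{Y_m^{(n)}}$ is an explicit exponential functional of $(m,W)$ and hence jointly measurable; applying the convergence result Theorem~\ref{ThmShamovConv} yields $\mathbf{M}_{Y_m}$ as an a.s.\ limit, and the pathwise uniqueness guarantees the limit is independent of the chosen approximation scheme, so joint measurability is preserved.
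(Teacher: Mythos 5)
Your proposal is correct and takes essentially the same approach as the paper: the paper disposes of this corollary in one line, declaring it an immediate consequence of Theorem~\ref{ThmShamov}, i.e., precisely the argument you give of freezing a realization $m$ of $M$ (legitimate because $W$ is independent of $M$) and applying the deterministic existence-and-uniqueness result pathwise in both directions. Your final paragraph on joint measurability, handled via measurably chosen finite-rank approximations and Theorem~\ref{ThmShamovConv}, addresses a genuine detail that the paper silently elides, so it is a welcome addition rather than a deviation.
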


As  formally expressed in~(\ref{RelativeGMC}), a basic example of a conditional GMC can be constructed with   $M \stackrel{d}{=}\mathbf{M}_{\alpha } $ and $\textbf{M} \stackrel{d}{=} \mathbf{M}_{\beta } $, where the GMCs $\mathbf{M}_{\alpha }\equiv\mathbf{M}_{\alpha Y} $ and $\mathbf{M}_{\beta }\equiv\mathbf{M}_{\beta Y} $ are associated to randomized shifts $\alpha Y$ and $\beta Y$ with $0\leq \alpha\leq \beta$  and have expectation $\mu$:
\begin{example} Let $Y:\mathcal{H}\rightarrow L^2(\Gamma,\mu)$  be a bounded linear map for which $YY^*$ is Hilbert-Schmidt with kernel $T(p,q)$ satisfying $\int_{\Gamma\times \Gamma}\textup{exp}\big\{\beta T(p,q)   \big\}\mu(dp)\mu(dq)<\infty$. By Corollary~\ref{CorApprox},  $\alpha Y$ is a randomized shift for any  $ \alpha\in [0,\beta]$.
\begin{itemize}

\item  The GMC associated to $\beta Y$ with expectation $\mu$ can be constructed as $$\mathbf{M}_{\beta }\bigg(\Big(1-\frac{\alpha^2}{\beta^2}\Big)^{1/2}W +\frac{\alpha}{\beta}W' ,\,dp\bigg)\,, $$ where $W,W':\mathcal{H}\rightarrow L^2(\Omega,\mathscr{F}, \mathbb{P} )$  are independent Gaussian fields over $\mathcal{H}$.  

\item  The operator $Y\equiv Y_{\mathbf{M}_{\alpha }}$  a.s.\ defines  a bounded operator $Y_{\mathbf{M}_{\alpha }}:\mathcal{H}\rightarrow L^2(\Gamma,\mathbf{M}_{\alpha } )$  for which $(\beta^2-\alpha^2)^{1/2}Y_{\mathbf{M}_{\alpha }}$ is a randomized shift since $\int_{\Gamma\times\Gamma  }\textup{exp}\big\{(\beta^2-\alpha^2)T(p,q)\big\}\mathbf{M}_{\alpha }(dp)\mathbf{M}_{\alpha }(dq)   $ is a.s.\ finite, which we can see from the formula $\mathbb{E}\big[\mathbf{M}_{\alpha }(dp)\mathbf{M}_{\alpha }(dq)    \big]=\textup{exp}\big\{ \alpha^2 T(p,q) \big\}\mu(dp)\mu(dq)$:
$$ \mathbb{E}\bigg[\int_{\Gamma\times\Gamma  }e^{(\beta^2-\alpha^2)T(p,q)}\mathbf{M}_{\alpha }(dp)\mathbf{M}_{\alpha }(dq)      \bigg]\,=\,\int_{\Gamma\times\Gamma  }e^{\beta^2T(p,q)}\mu(dp)\mu(dq) \,<\,\infty  \,. $$

\item $\mathbf{M}_{\beta }\left(\big(1-\frac{\alpha^2}{\beta^2}\big)^{1/2}W +\frac{\alpha}{\beta}W'\right)$ is a conditional GMC over $\big(W,(\beta^2-\alpha^2)^{1/2}Y_{\mathbf{M}_{\alpha } }\big)$ with conditional expectation $\mathbf{M}_{\alpha }(W')$.

\end{itemize}

\end{example}

\subsection{Main theorem}

In the theorem statement below, $\Gamma$ denotes the space of directed paths on a diamond fractal defined in the next section, and $(\Gamma,\mathbf{M}_r)$ is a random measure  whose law arises as a continuum limit of disordered Gibbs measures on discrete models for random polymers~\cite[Section 2.6]{Clark4}.

\begin{theorem}\label{ThmMain} Let the one-parameter family of laws for random measures $(\mathbf{M}_r)_{r\in \R}$ on the  space $\Gamma$ be defined as in~\cite[Theorem 2.12]{Clark4} (restated below in  Theorem~\ref{ThmExist}). For a fixed $r\in \R$, let $W$ be a standard Gaussian random vector in $\mathcal{H}$ that is independent of $\mathbf{M}_r$.   There is a compact operator $Y_{  \mathbf{M}_r }:\mathcal{H} \rightarrow L^2(\Gamma,\mathbf{M}_r)  $ depending  measurably on  $ \mathbf{M}_r$ for which (i)-(iii) below  hold for any $a\in \R_+$.
\begin{enumerate}[(i)]

\item The operator  $\sqrt{a}Y_{\mathbf{M}_r }$ is a.s.\  a randomized shift.  The operator $\textbf{T}_{\mathbf{M}_r}:L^2(\Gamma,\mathbf{M}_r)\rightarrow     L^2(\Gamma,\mathbf{M}_r) $ defined by $\textbf{T}_{\mathbf{M}_r}:=Y_{\mathbf{M}_r}Y_{\mathbf{M}_r}^*$ is a.s.\ Hilbert-Schmidt but not trace class and has kernel $T(p,q)$ not depending on $\mathbf{M}_r$.

\item There is a unique conditional GMC $\mathds{M}_{r,a}$  over $(W,\sqrt{a} Y_{\mathbf{M}_r})$ with conditional expectation  $\mathbf{M}_r$.

\item The random measure $(\Gamma,\mathds{M}_{r,a})$ is equal in law to  $(\Gamma, \mathbf{M}_{r+a}) $.

\end{enumerate}

\end{theorem}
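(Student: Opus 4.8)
\section*{Proof proposal}

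The plan is to handle the three claims in order, obtaining (ii) at once from (i) via Corollary~\ref{CorShift}, and concentrating the real effort on the distributional identification in (iii).

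\textbf{Claim (i).} First I would take the deterministic symmetric nonnegative intersection kernel $T(p,q)$ furnished by the construction in Theorem~\ref{ThmExist} and realize it as $T(p,q)=\langle Y(p),Y(q)\rangle$ for a single \emph{generalized} $\mathcal{H}$-valued function $p\mapsto Y(p)$ (a Karhunen--Lo\`eve / white-noise representation of $T$ on the fractal); reading this function through the measure $\mathbf{M}_r$ produces the operator $Y_{\mathbf{M}_r}:\mathcal{H}\rightarrow L^2(\Gamma,\mathbf{M}_r)$, whose kernel is $T$ and is by construction independent of $\mathbf{M}_r$. The Hilbert--Schmidt norm of $\mathbf{T}_{\mathbf{M}_r}=Y_{\mathbf{M}_r}Y_{\mathbf{M}_r}^*$ is $\int_{\Gamma\times\Gamma}T(p,q)^2\,\mathbf{M}_r(dp)\mathbf{M}_r(dq)$, while the randomized-shift criterion of Corollary~\ref{CorApprox} for $\sqrt{a}\,Y_{\mathbf{M}_r}$ (whose $YY^*$ has kernel $aT$) requires finiteness of $\int_{\Gamma\times\Gamma}e^{aT(p,q)}\mathbf{M}_r(dp)\mathbf{M}_r(dq)$. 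Both integrals are a.s.\ finite because their expectations are: taking $\mathbb{E}$ converts $\mathbf{M}_r\otimes\mathbf{M}_r$ into $\upsilon_r$, and the Radon--Nikodym relation $d\upsilon_{r+a}/d\upsilon_r=e^{aT}$ from point (III) of the introduction identifies $\int e^{aT}\,d\upsilon_r$ with $\upsilon_{r+a}(\Gamma\times\Gamma)=\mathbb{E}[\mathbf{M}_{r+a}(\Gamma)^2]$, finite by the construction of $\mathbf{M}_{r+a}$; since $T^2\lesssim_a e^{aT}$, the Hilbert--Schmidt integral is controlled in the same way. Finally, $\mathbf{T}_{\mathbf{M}_r}$ fails to be trace class because $\Tr\mathbf{T}_{\mathbf{M}_r}=\int_{\Gamma}T(p,p)\,\mathbf{M}_r(dp)$ and the diagonal $T(p,p)=\|Y(p)\|_{\mathcal{H}}^2$ is a.e.\ infinite --- the generalized, non-$\mathcal{H}$-valued nature of $Y$ is precisely the critical-dimension signature that two copies of a path have infinite mutual intersection. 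Claim (ii) is then immediate from Corollary~\ref{CorShift}.

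\textbf{Claim (iii).} For the identification in law I would \emph{not} try to match moments directly, since GMC laws are generically not moment-determinate; instead I would realize both sides as limits of one common regularizing approximation and invoke the conditional form of Theorem~\ref{ThmShamovConv}. Concretely, I would use the approximating random measures $\mathbf{M}_r^{(n)}$ underlying the construction in Theorem~\ref{ThmExist}, built from finitely many Gaussian coordinates, for which (a) $\mathbf{M}_r^{(n)}\to\mathbf{M}_r$ and $\mathbf{M}_{r+a}^{(n)}\to\mathbf{M}_{r+a}$ in law, (b) the regularized intersection kernels $T_n$ converge to $T$ in $L^0(\mathbf{M}_r\otimes\mathbf{M}_r)$ with associated $\mathcal{H}$-valued couplings $Y^{(n)}\to Y_{\mathbf{M}_r}$ strongly, and (c) at each finite level $\mathbf{M}_{r+a}^{(n)}$ is exactly a (trivial, $\mathcal{H}$-valued) subcritical GMC over $\mathbf{M}_r^{(n)}$ with coupling $\sqrt{a}\,Y^{(n)}$, i.e.\ $\mathbf{M}_{r+a}^{(n)}\stackrel{\mathcal{L}}{=}\mathds{M}_{r,a}^{(n)}$ --- the statement that raising the disorder parameter from $r$ to $r+a$ grafts on an independent GMC layer, which holds cleanly before the self-intersection diverges. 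Conditionally on the reference measure, Theorem~\ref{ThmShamovConv} then yields $\mathds{M}_{r,a}^{(n)}\to\mathds{M}_{r,a}$: its hypotheses (II) and (III) are the strong convergence of $Y^{(n)}$ and of the kernels $T_n$, and the uniform integrability hypothesis (I) of the total masses $\mathds{M}_{r,a}^{(n)}(\Gamma)$ follows from the uniform second-moment bound $\mathbb{E}[\mathds{M}_{r,a}^{(n)}(\Gamma)^2]=\int e^{aT_n}\,d\upsilon_r\le\int e^{aT}\,d\upsilon_r<\infty$ established in (i). Passing to the limit in $\mathbf{M}_{r+a}^{(n)}\stackrel{\mathcal{L}}{=}\mathds{M}_{r,a}^{(n)}$ gives $\mathbf{M}_{r+a}\stackrel{\mathcal{L}}{=}\mathds{M}_{r,a}$.

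\textbf{Main obstacle.} The hard part will be carrying this approximation through while the reference measure is itself random and varies with $n$, since a GMC over a converging \emph{random} reference is a genuine double limit. Two points need care: first, arranging $Y^{(n)}$ and $Y_{\mathbf{M}_r}$ to act measurably and compatibly across the moving spaces $L^2(\Gamma,\mathbf{M}_r^{(n)})$ and $L^2(\Gamma,\mathbf{M}_r)$ so that strong convergence of the couplings and uniform integrability of total masses hold jointly in the pair $(W,\mathbf{M}_r)$ --- presumably via a Skorokhod coupling making $\mathbf{M}_r^{(n)}\to\mathbf{M}_r$ almost surely, followed by the conditional Theorem~\ref{ThmShamovConv}; and second, extracting from Theorem~\ref{ThmExist} both the exact finite-level GMC relation (c) and the kernel convergence $T_n\to T$ --- this is where the self-similar/hierarchical construction of the $\mathbf{M}_r$ on the diamond fractal and the consistency of the family in the parameter $r$ must do the substantive work. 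As a consistency check one can verify, via the conditional Kahane formula (Proposition~\ref{PropGMCmoments} applied given $\mathbf{M}_r$), that $\mathbb{E}[\mathds{M}_{r,a}(A)^m]=\int_{A^m}e^{a\sum_{i<j}T(p_i,p_j)}\,\upsilon_r^{(m)}(dp_1,\dots,dp_m)$ coincides with the $m$-point correlation measure $\upsilon_{r+a}^{(m)}$ dictated by the $r$-dependence of the correlation measures, the case $m=2$ being exactly point (III).
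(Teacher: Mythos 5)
Your plan for parts (i)--(ii) is serviceable in outline, but your proof of (iii) rests on the unproven assumption (c): that the construction behind Theorem~\ref{ThmExist} supplies finite-level measures $\mathbf{M}_r^{(n)}$, built from finitely many Gaussian coordinates, for which $\mathbf{M}_{r+a}^{(n)}$ is \emph{exactly} a trivial GMC over $\mathbf{M}_r^{(n)}$ with coupling $\sqrt{a}\,Y^{(n)}$. No such structure is available: in~\cite{Clark4} the measures $\mathbf{M}_r$ arise as distributional limits of discrete polymer Gibbs measures (with general, not necessarily Gaussian, disorder), and the whole point of this paper (properties (I)--(III) of the introduction) is that $\mathbf{M}_r$ admits no GMC representation over any deterministic reference measure. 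Even if one imposes Gaussian disorder so that a discrete analogue of (c) holds (this is essentially the Example in Section~\ref{SecGMCExt}), transferring it through the critical continuum limit requires a convergence theorem for conditional GMCs whose \emph{random} reference measures converge only in law; Theorem~\ref{ThmShamovConv} is stated for a fixed deterministic $\mu$, and you yourself flag this double limit as the unresolved ``main obstacle.'' So the argument as proposed breaks down at precisely the step that carries all the weight.

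The ingredient you are missing is the uniqueness clause of Theorem~\ref{ThmExist}, which lets the paper prove (iii) with no approximation scheme at all: one verifies that the family of conditional GMC laws $(\mathds{M}_{r-a,a})_{r\in\R}$ itself satisfies the characterizing properties (I)--(IV) of Theorem~\ref{ThmExist} and then invokes uniqueness of the family. Properties (I)--(III) are exactly the conditional-expectation and Kahane-moment computations you relegate to a ``consistency check''; the substantive step is the hierarchical recursion (IV), which is proved by showing that $\Upsilon\overline{\mathds{M}}_{r-a,a}$ satisfies the three conditions of Definition~\ref{DefCGMC} relative to $(\overline{W},\sqrt{a}\,\overline{Y}_{\overline{\mathbf{M}}_{r-a}})$ and appealing to uniqueness of the conditional GMC (Corollary~\ref{CorShift}) together with the renormalization identities of Proposition~\ref{PropHierSymm} (via Lemma~\ref{CorHier}). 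Relatedly, in part (i) your deterministic Karhunen--Lo\`eve factorization $T(p,q)=\langle Y(p),Y(q)\rangle$ is not justified, and is dubious since $T$ vanishes $\mu\times\mu$-a.e., so it has no useful spectral theory relative to any deterministic reference; the paper instead takes the factorization $\mathbf{T}_{\mathbf{M}_r}=\hat{Y}_{\mathbf{M}_r}\hat{Y}_{\mathbf{M}_r}^*$ through the random space $L^2(D,\vartheta_{\mathbf{M}_r})$ provided by Theorem~\ref{ThmKernel} (quoted from~\cite{Clark4}), which also supplies ``Hilbert--Schmidt but not trace class'' (your infinite-diagonal heuristic is not a proof for a merely measurable kernel) and, crucially, is exactly the structure that makes the decomposition of $\hat{Y}_{\mathbf{M}_{r+1}}$ in Proposition~\ref{PropHierSymm}, and hence step (IV), available.
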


\section{A diamond fractal, its  path space, and a critical continuum model}\label{SecReview}

Sections~\ref{SecDHL}-\ref{SecHausdorff} review the construction from~\cite{Clark2} of a diamond fractal, which we refer to as the \textit{diamond hierarchical lattice} (DHL), along with its space of directed paths.  Section~\ref{SecCDRP} outlines the properties of the continuum random polymer measures $(\mathbf{M}_r)_{r\in \R}$ referred to in Theorem~\ref{ThmMain}. The presentation in points (A)-(V) below is intended to be scannable and readily referred back to, and a reader familiar with~\cite{Clark4} can skip to Section~\ref{SecThm}.\vspace{.15cm}

The DHL $D^{b,s}$, which depends on  a branching number $b\in \{2,3,\ldots\}$ and a segmenting number $s\in \{2, 3,\ldots\}$, defines a space $\Gamma^{b,s}$ of directed pathways between opposing nodes $A$ and $B$; see the figure below for a depiction of the  diamond fractal's self-similarity in the case of  $(b,s)=(2,3)$. A directed pathway is an isometrically embedded copy of the unit interval $[0,1]$ with $0\equiv A$ and $1\equiv B$.  
\begin{figure}[hbt!]
\centering
\includegraphics[scale=.6]{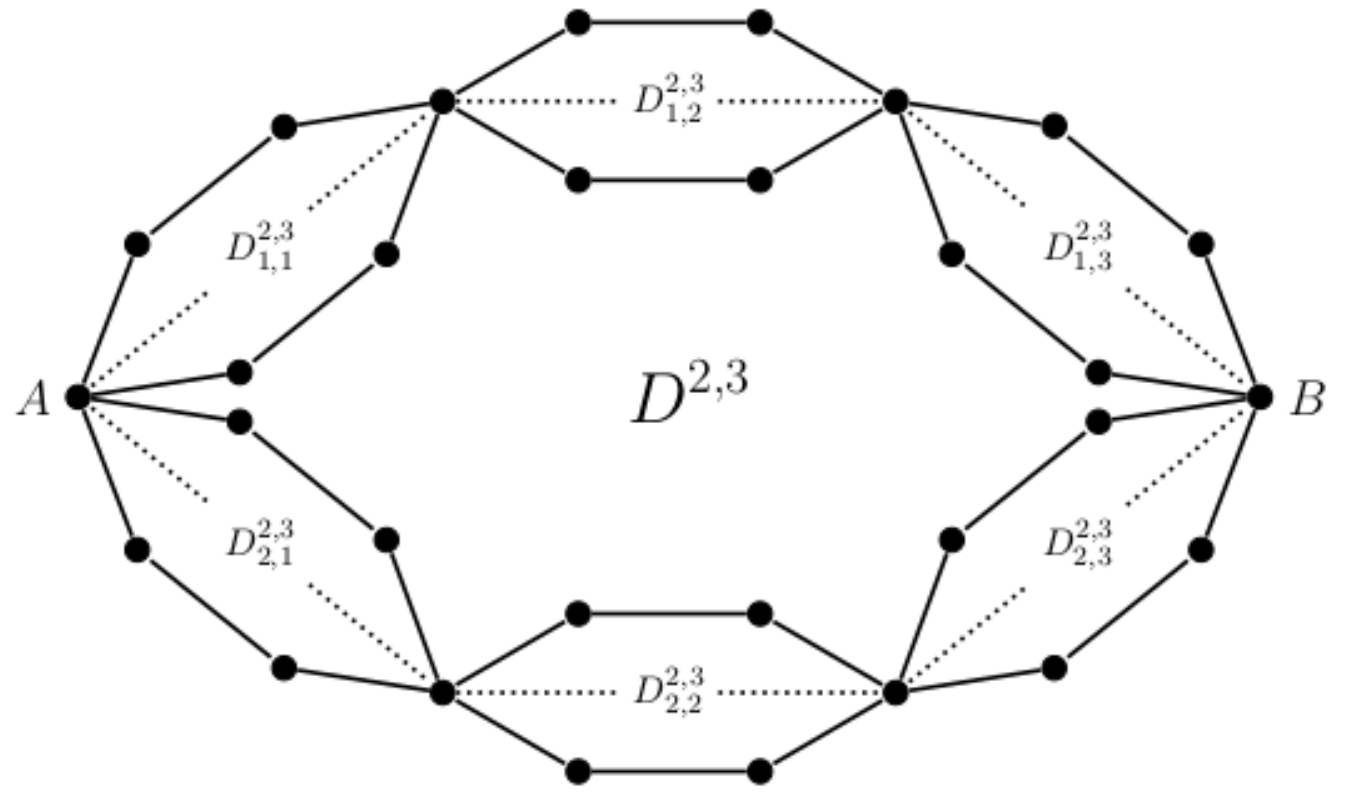}
\caption{The diamond fractal $D^{2,3}$  embeds shrunken copies $D^{2,3}_{i,j}$ of itself corresponding to each $(i,j)\in \{1,2\}\times \{1,2,3\}$.  The path space $\Gamma^{2,3}$ is canonically soluble as $\bigcup_{i=1}^2\bigtimes_{j=1}^3\Gamma^{2,3}$ through three-fold concatenation of paths crossing the subcopies of $D^{2,3}$.}
\end{figure}

\subsection{The DHL and its space of directed paths }\label{SecDHL}

\noindent \textbf{(A) Sequences:} Given  $b,s\in \{2,3,\ldots\}$, define  $$\mathcal{D}^{b,s}\,:=\,\big( \{1,\ldots, b\}\times\{1,\ldots, s\}\big)^{\infty}\,, $$ i.e.,  the set of sequences of ordered pairs $x=\{(b_k, s_k)  \}_{k\in \mathbb{N}}$, where $b_k\in \{1,\ldots, b\}$ and $s_{k}\in \{ 1,\ldots s\}$.  The DHL, $D^{b,s}$, is defined  as  an equivalence relation on $\mathcal{D}^{b,s}$,
\begin{align}\label{Equiv} D^{b,s}\,:=\, \mathcal{D}^{b,s}/\big(x,y\in \mathcal{D}^{b,s}\text{ with }d_D(x,y)=0\big) \,,
\end{align}
 for a semi-metric $d_D: \mathcal{D}^{b,s}\times \mathcal{D}^{b,s}\longrightarrow [0,1]$ to be defined below.\vspace{.25cm}

\noindent \textbf{(B) The semi-metric:} Define the map $\widetilde{\pi}: \mathcal{D}^{b,s}\rightarrow [0,1]$ such that a sequence $x= \{(b_k^x,s_k^x)\}_{k\in \mathbb{N}}$ is assigned the value 
$$\widetilde{\pi}(x)\,:=\, \sum_{k=1}^{\infty} \frac{s_{k}^{x}-1}{ s^{k}}  \,, $$
in other terms, the number with base-$s$  decimal expansion  having  $k^{th}$ digit $s_k^x-1\in\{0,\ldots, s-1\}$.   Define  the extremal sets $$A\,:=\,\big\{x\in \mathcal{D}^{b,s}\,\big|\, \widetilde{\pi}(x)=0 \big\}\hspace{1cm}\text{and}\hspace{1cm}B\,:=\,\big\{x\in \mathcal{D}^{b,s}\,\big|\, \widetilde{\pi}(x)=1 \big\}\,. $$
   For $x,y\in\mathcal{D}^{b,s}$ we write $x\updownarrow y$ if  $x$ or $y$ belongs to one of the sets $A$, $B$ or if the sequences of pairs $\{(b_k^x,s_k^x)\}_{k\in \mathbb{N}}$ and  $\{(b_k^y,s_k^y)\}_{k\in \mathbb{N}}$ defining $x$ and $y$, respectively, have their first disagreement at an $s$-component value, i.e., there exists an $n\in \mathbb{N}$ such that $  b_k^x=b_k^y$ for all $ 1\leq k \leq n $ and  $ s_n^x\neq s_n^y$.  Intuitively,  $x\updownarrow y$ means that there is directed path passing over both $x$ and $y$.
  We define the semi-metric $d_D$ in terms of $\widetilde{\pi}$    as
\begin{align}\label{DefSemiMetric}
d_D(x,y)\,:=\,\begin{cases} \quad  \quad  \big|\widetilde{\pi}(x)-\widetilde{\pi}(y)\big|   & \quad\text{if } x\updownarrow y,  \\ \,\, \displaystyle \inf_{z \in \mathcal{D}^{b,s},\, z\updownarrow x,\, z\updownarrow y  }\Big( d_D(x,z)+d_D(z,y) \Big)  & \quad  \text{otherwise.} \end{cases}    
\end{align}
 The semi-metric $d_D(x,y)$  takes values $\leq 1$ since, by definition, $z\updownarrow x $  and $z\updownarrow y $ for any $z\in A$ or $z\in B$, and thus $d_D(x,y)\leq \min\big(\widetilde{\pi}(x)+\widetilde{\pi}(y) ,2-\widetilde{\pi}(x)-\widetilde{\pi}(y)\big)$.\vspace{.2cm}

\noindent \textbf{(C) Vertex set:} Let $E^{b,s}$ denote the set of points $x\in D^{b,s}$ that correspond through~(\ref{Equiv}) to a single-element equivalence class of $\mathcal{D}^{b,s}$.  The complement $V^{b,s}=D^{b,s}\backslash E^{b,s}$ is a countable, dense set. 
\vspace{.2cm}

\noindent \textbf{(D) Directed paths:} A \textit{directed path}   on $D^{b,s}$ is a continuous function $p:[0,1]\rightarrow D^{b,s}$ such that $\widetilde{\pi}\big( p(r)\big)=r $ for all $r\in[0,1]$.  
Thus the path moves  at a constant speed from $A$ to $B$.  We can measure the distance between paths using the uniform metric:
$$    d_\Gamma\big(p_{1},p_{2}\big) \, = \, \max_{0\leq r\leq 1}d_D\big( p_{1}(r), p_{2}(r)    \big)  \,\,\,\, \text{for}\,\,\,\, p_{1},p_{2}\in \Gamma^{b,s}   \,.     $$
 Paths  cross over  $V^{b,s}$ at the countable set $\mathcal{V}\subset [0,1]$ of times $t$ of the form $t=\frac{k}{s^n}$ for $k,n\in \mathbb{N}_0$.

\subsection{Cylinder sets and uniform measures}\label{SecCylinder}

\noindent \textbf{(E) Shift maps:} Define the shift maps  $S_{i,j}:\mathcal{D}^{b,s}\rightarrow \mathcal{D}^{b,s}$ for $(i,j)\in \{1,\ldots, b\}\times  \{1,\ldots, s\}$ that send a sequence $x\in \mathcal{D}^{b,s}$ to a shifted sequence $y=S_{i,j}(x)$ having initial term $(i,j)$, i.e., $\{(b_k^x,s_k^x)\}_{k\in \mathbb{N}}$ is mapped to $\{(b_k^y,s_k^y)\}_{k\in \mathbb{N}}$ for $(b_1^y,s_1^y)=(i,j)$ and $(b_k^y,s_k^y)=(b_{k-1}^x,s_{k-1}^x)$ for $k\geq 2$.  The maps $S_{i,j}$ are well-defined on $D^{b,s}$  and have the contractive property
\begin{align}\label{Contract}
 d_D\big(S_{i,j}(x),S_{i,j}(y)\big)\,=\,\frac{1}{s} d_D(x,y) \,\,\,\,\text{for}\,\,\,\, x,y\in D^{b,s} \,. 
\end{align}

\noindent \textbf{(F) Cylinder subsets of the DHL:} For a length-$n$  sequence of pairs $( b_k ,s_k)\in \{1,\ldots b\}\times \{1,\ldots, s\}$, define the following subset of $E^{b,s}$:
\begin{align*}
C_{ (b_1,s_1)\times \cdots \times (b_n,s_n)}  \,:=\,   S_{ b_1,s_1 }\circ \cdots \circ S_{b_n ,s_n }\big(E^{b,s} \big)\,.
\end{align*}
The collection of cylinder sets $C_{ (b_1,s_1)\times \cdots \times (b_n,s_n)}$  corresponding to length-$n$ sequences of pairs forms a partition of $E^{b,s}$ that we denote by $E^{b,s}_n$.  Let  $\langle x\rangle_n \in E_n^{b,s}$ denote the equivalence class   of $x\in E^{b,s}$.  \vspace{.2cm}

\noindent \textbf{(G) Uniform measure on the DHL:} The Borel  $\sigma$-algebra, $\mathcal{B}_D$, of $(D^{b,s},d_D)$ is  generated by subsets of $V^{b,s}$ and  elements in  $\bigcup_{k=0}^{\infty}E_k^{b,s}$.  There is a unique normalized  measure $\nu$ on $(D^{b,s},\mathcal{B}_D )$ that assigns the countable set  $V^{b,s}$ measure zero and cylinder sets $e\in E^{b,s}_{n}$ measure $\nu(e)=|E_n^{b,s}  |^{-1}= (bs)^{-n}$.\vspace{.2cm}

\noindent \textbf{(H) Cylinder sets for directed paths:} For any $n\in \mathbb{N}$, a path $p\in \Gamma^{b,s}$ determines a function $[p]_n:\{1,\ldots, s^n\}\rightarrow E_n^{b,s}$, where $[p]_n(k):= \langle p(t) \rangle_n$ for $t\in (\frac{k-1}{s^n},\frac{k}{s^n})\backslash \mathcal{V}$.     The map $p \mapsto [p]_n$ determines a partition  $\Gamma^{b,s}_n$ of $\Gamma^{b,s}$, and we interpret our notation flexibly by identifying $[p]_n$  with the equivalence class of $p$ in $\Gamma^{b,s}_n$.\vspace{.2cm}

\noindent \textbf{(I) Uniform measure on directed paths:} The Borel $\sigma$-algebra, $\mathcal{B}_\Gamma $, on  $(\Gamma^{b,s}, d_\Gamma)$ is generated by the collection of cylinder sets $\cup_{n=1}^\infty \Gamma^{b,s}_n $.
 There is a unique measure $\mu$ on $(\Gamma^{b,s},\mathcal{B}_\Gamma)$ satisfying $\mu(\mathbf{p})=|  \Gamma^{b,s}_n |^{-1}  $ for all $n\in \mathbb{N}$ and  $\mathbf{p}\in \Gamma^{b,s}_n$.  The \textit{uniform measure} on  $\Gamma^{b,s}$  refers to the triple      $\big(\Gamma^{b,s},\mathcal{B}_\Gamma,  \mu\big)$.


\subsection{Hausdorff dimension considerations}\label{SecHausdorff}

\noindent \textbf{(J) Hausdorff dimension of the DHL:} The shift maps $S_{i,j}$ take $D^{b,s}$ to  shrunken, embedded copies  of $D^{b,s}$, denoted by $D^{b,s}_{i,j}$, whose pair-wise intersections are either empty or consist of a single point.   The maps $S_{i,j}$ are the similitudes of the fractal $D^{b,s}$, and since there  are $b s$ similitudes satisfying the contraction property~(\ref{Contract}), $D^{b,s}$ has Hausdorff dimension $(\log b+\log s)/\log s$; see~\cite[Section~11.3]{Folland}.  In particular when $b=s$ the DHL has Hausdorff dimension two.
 \vspace{.2cm}

 \noindent \textbf{(K) Hausdorff dimension of path intersection sets when $\mathbf{b<s}$:} For $p,q\in \Gamma^{b,s}$, define  the set of intersection times: $I_{p,q}:=\big\{t\in [0,1]\,\big|\,p(t)=q(t)\big\}$.  Assume  $b<s$, i.e., that the DHL has less branching than segmenting.  If the paths $p,q\in \Gamma^{b,s}$ are independently chosen uniformly at random, i.e., according to the product measure   $\mu\times \mu$, then one of the following events a.s.\ occurs:
 $$ \text{(i) $I_{p,q}$ is a finite set}\hspace{1cm}\text{or}\hspace{1cm} \text{(ii) $I_{p,q}$ has Hausdorff dimension $\frak{h}=\frac{\log s-\log b  }{\log s   }$}\,,     $$
and the probability $\frak{p}_{b,s}\in (0,1)$ of (ii) is a fixed point for the function $M(x)=\frac{1}{b}\big[1-(1-x)^s\big]$.  \vspace{.2cm}

 \noindent \textbf{(L) Trivial path intersections in the critical case $\mathbf{b=s}$:}  
In the Hausdorff dimension two case of the DHL ($b=s$), the product measure $\mu\times \mu$ is supported on pairs $(p,q)\in \Gamma^{b,b}\times \Gamma^{b,b}$ for which $I_{p,q}$ is finite. Thus (i) above occurs with probability one, and there is no chance that a nontrivial form of (ii) occurs in the sense of  $I_{p,q}$ being uncountably infinite despite having Hausdorff dimension zero. This contrasts with the family of random measures $(\Gamma^{b,b},\mathbf{M}_r)$ summarized in Section~\ref{SecCDRP} for which $(\Gamma^{b,b}\times \Gamma^{b,b}, \mathbf{M}_r\times\mathbf{M}_r)$ a.s.\ assigns  positive measure to the set of pairs $(p,q)$ with nontrivial intersection sets.

\subsection{Continuum random polymer measures  in the critical case of the DHL}\label{SecCDRP}
 
Next we will outline the defining properties for  the family of random measures $(\mathbf{M}_r)_{r\in \R}$ introduced in~\cite{Clark4}.  For the remainder of the article we will focus only on the critical case $b=s$ of the DHL and maintain $b\in \{2,3,\ldots\}$ as an underlying parameter that will be removed as a superscript from all DHL-related notations: $ D\equiv D^{b,b}$, $\Gamma\equiv \Gamma^{b,b}$, $ E\equiv E^{b,b}$, $ V\equiv V^{b,b}\,$.  The expectation symbol $\mathbb{E}$ will always refer to the underlying probability space $\big(\Omega, \mathscr{F} ,\mathbb{P})$ on which $\mathbf{M}_r$ is defined.  \vspace{.2cm}

\noindent  \noindent \textbf{(M) Disordered measure on the path space:} The following theorem is from~\cite{Clark4}. The uniquess of the family of laws with properties (I)-(IV) will be used to prove part (iii) of Theorem~\ref{ThmMain}.

\begin{center}
\begin{minipage}{.9\linewidth}
\begin{theorem}[Theorem 2.12 of~\cite{Clark4}]\label{ThmExist}
There is a unique   family of laws for random measures $(\mathbf{M}_{r})_{r\in \R}$  on the path space, $\Gamma$, of $D$  satisfying the properties (I)-(IV) below.

\begin{enumerate}[(I)]

\item The expectation of the measure $\mathbf{M}_{r}$ with respect to the underlying probability space is the uniform measure on paths, i.e., $\mathbb{E}[ \mathbf{M}_{r} ]=\mu$.

\item For a correlation measure $(\Gamma\times \Gamma, \upsilon_r) $ discussed below, we have that $\mathbb{E}[ \mathbf{M}_r \times  \mathbf{M}_r    ]= \upsilon_r $. 

\item For $m\in \{2,3,\ldots \}$, the $m^{th}$ centered moment of the total mass, $\mathbf{M}_r(\Gamma)$, is given by $R^{(m)}(r)$ for an increasing  function $R^{(m)}:\R\rightarrow \R_+$  that decays in proportion to $(-r)^{-\lceil m/2\rceil}$ as $r\rightarrow -\infty$ and grows without bound as $r\rightarrow \infty$.

\item Let $(\Gamma,\mathbf{M}_{r}^{(i,j)}) $ be independent copies of $(\Gamma,\mathbf{M}_{r}) $ corresponding to the first-generation embedded copies, $D_{i,j}$, of $D$.  There is equality in distribution of the random measures $
\mathbf{M}_{r+1 }\,\stackrel{d}{=}\, \frac{1}{b}\sum_{i=1}^b  \prod_{j=1}^b  \mathbf{M}_{r }^{(i,j)}$  under the identification $ \Gamma\,\equiv\, \bigcup_{i=1}^{b}\bigtimes_{j=1}^b  \Gamma 
$.

\end{enumerate}

\end{theorem}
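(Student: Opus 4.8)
The plan is to construct the family $(\mathbf{M}_r)_{r\in\R}$ for each real $r$ as a limit of discrete directed polymers on the depth-$n$ approximations of the DHL, with the disorder variance tuned to the critical weak-disorder window; the continuous parameter $r$ records the position in that window, while the integer shift $r\mapsto r+1$ in (IV) corresponds to one level of decimation of the hierarchical graph, since the depth-$(n+1)$ graph is the $b$-fold parallel, $b$-fold series composition of $b^2$ copies of the depth-$n$ graph. The analytic engine for properties (I)--(III) is the scalar recursive distributional equation inherited from this self-similarity. Writing $\mathbf{Z}_r:=\mathbf{M}_r(\Gamma)$ for the total mass, (IV) forces
\begin{align*}
\mathbf{Z}_{r+1}\,\stackrel{d}{=}\,\frac{1}{b}\sum_{i=1}^b\prod_{j=1}^b \mathbf{Z}_r^{(i,j)}\,,
\end{align*}
with the $\mathbf{Z}_r^{(i,j)}$ independent copies of $\mathbf{Z}_r$. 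The first step is to analyze this equation at the level of moments: the mean is pinned by (I) to $\mathbb{E}[\mathbf{Z}_r]=\mu(\Gamma)=1$, which is self-consistent under the cascade since $\tfrac1b\cdot b\cdot 1=1$.

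Expanding the $m$-th power of the right-hand side produces a triangular system expressing the centered moment $R^{(m)}(r+1)$ in terms of the $R^{(k)}(r)$ for $k\le m$, which I would solve recursively in $m$, treating each $R^{(m)}$ as the solution of an inhomogeneous linear recursion in $r$ whose homogeneous multiplier is the linearization of the smoothing transform. The critical feature $b=s$ enters precisely here: the multiplier governing the variance recursion equals $1$, so $R^{(2)}(r)$ decays not geometrically but only polynomially, and a careful reading of the subleading term should yield $R^{(2)}(r)\sim C/(-r)$ as $r\to-\infty$ together with monotonicity and unbounded growth as $r\to+\infty$; feeding this into the triangular system gives the asymptotic $R^{(m)}(r)\propto(-r)^{-\lceil m/2\rceil}$ of (III). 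With uniform moment control on compact $r$-intervals, I would then lift the total mass to a genuine measure: on each cylinder partition $\Gamma_n$ the iterated relation (IV) expresses the weights of the cylinders as explicit sums of products of independent total masses at the shifted parameter, and I would verify Kolmogorov consistency between $\Gamma_n$ and $\Gamma_{n+1}$ and pass to an a.s.\ limiting Borel measure on $(\Gamma,\mathcal{B}_\Gamma)$. Properties (I) and (II) follow by taking expectations through the cascade: the mean-one property gives $\mathbb{E}[\mathbf{M}_r]=\mu$, and the two-point function $\mathbb{E}[\mathbf{M}_r(dp)\mathbf{M}_r(dq)]$ obeys its own scalar recursion in $r$ depending on $(p,q)$ only through the generation at which the paths branch apart, which I would solve to identify the correlation measure $\upsilon_r$.

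For uniqueness the essential point is that (I)--(IV) determine the whole trajectory $r\mapsto\mathrm{Law}(\mathbf{M}_r)$ from its behavior as $r\to-\infty$. The cascade defines a map $\Phi$ on laws with $\mathrm{Law}(\mathbf{M}_{r+1})=\Phi(\mathrm{Law}(\mathbf{M}_r))$, so that $\mathrm{Law}(\mathbf{M}_r)=\Phi^n(\mathrm{Law}(\mathbf{M}_{r-n}))$, and by (I) the laws $\mathrm{Law}(\mathbf{M}_{r-n})$ converge to the point mass at the deterministic measure $\mu$, which is itself a fixed point of $\Phi$. Given that both hypothetical families satisfy (I)--(IV), I would argue that the moments $R^{(m)}(r)$ are the unique solution of the moment recursion compatible with the prescribed decay rate in (III), and that the resulting moment sequence is determinate (its growth in $m$ being mild enough for a Carleman-type criterion), so that the law of $\mathbf{Z}_r$ and, through the hierarchical decomposition on cylinders, the full finite-dimensional distributions of $\mathbf{M}_r$ are fixed.

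The main obstacle is the analysis of the critical recursion. Because $b=s$ makes the linearized smoothing transform neutral, the point mass at $\mu$ is a non-hyperbolic fixed point of $\Phi$, and the nontrivial family $(\mathbf{M}_r)$ is the unique trajectory emanating from it along the neutral direction. Extracting the precise $(-r)^{-\lceil m/2\rceil}$ rates, controlling the subleading corrections well enough both to establish (III) and to force uniqueness of this trajectory — rather than merely uniqueness within each coset $r_0+\Z$ allowed by the integer-step recursion — and verifying the moment-determinacy needed to upgrade equality of moments to equality in law together form the technical heart of the argument.
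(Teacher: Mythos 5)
A preliminary remark on the comparison itself: this paper contains no proof of Theorem~\ref{ThmExist} for your attempt to be measured against --- the statement is imported verbatim as Theorem 2.12 of~\cite{Clark4}, and the construction behind it rests on the discrete weak-disorder limit of~\cite{Clark3}. Judged against that cited route, your outline is broadly parallel: discrete polymers on the hierarchical graphs with disorder tuned to the critical window for existence, moment recursions inherited from the $b$-branch/$b$-segment self-similarity for (I)--(III), and the cascade map $\Phi$ with $\delta_\mu$ as the $r\to-\infty$ anchor. (One small caution on the existence half: $\Phi$ fixes $\delta_\mu$ exactly, so ``the trajectory emanating from $\delta_\mu$ along the neutral direction'' is not by itself a construction; the genuine input is the discrete limit, which you do invoke.)

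Two steps of your uniqueness argument have genuine gaps. First, the trajectory-selection step. Because the linearized multiplier at the fixed point is $1$, every backward orbit of the variance recursion $R(r+1)=\frac{1}{b}\bigl[(1+R(r))^b-1\bigr]$ converging to $0$ shares the same universal leading and subleading asymptotics: for $x_{n+1}=x_n+cx_n^2+O(x_n^3)$ one has $x_{-n}=\frac{1}{cn}+O(\log n/n^2)$ for \emph{every} orbit, and distinct orbits within a coset $r_0+\Z$ are separated only by a translation constant at order $1/n^2$ (the Fatou-coordinate freedom at a parabolic fixed point). So a hypothesis prescribing only the decay \emph{rate} $(-r)^{-\lceil m/2\rceil}$ --- which is how you read (III) --- cannot single out one trajectory per coset, and no amount of ``controlling subleading corrections'' extracted from rate information alone can. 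The theorem does not face this problem: (II) prescribes the \emph{exact} correlation measure $\upsilon_r$ (so $\mathrm{Var}(\mathbf{M}_r(\Gamma))=R(r)$ for the specific distinguished solution $R$ with the expansion recorded in item (O) of Section~\ref{SecCDRP}), and (III) asserts the centered moments equal \emph{given} functions $R^{(m)}$. Uniqueness is a statement for each fixed $r$ with these data supplied, not a selection principle among orbits, and your proof should use that.

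Second, the moment-determinacy step is unjustified and almost certainly false as stated: Carleman's criterion requires factorial-type moment growth, roughly $\mathbb{E}\bigl[X^{2m}\bigr]^{1/(2m)}$ with divergent reciprocal sum, whereas here the $m$-th moment of the total mass is governed by quantities of size $1+R\bigl(r-a+am(m-1)\bigr)$ (this is exactly the bound in the Property (III) computation in Section~\ref{SecThm} of this paper), and $R(r+1)\approx R(r)^b/b$ for large arguments, i.e.\ $R$ grows doubly exponentially along $r\to+\infty$. Moments of size $\exp\bigl\{b^{O(m^2)}\bigr\}$ put you far outside Carleman's regime, so equality of all moments cannot be upgraded to equality in law by that route. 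The workable substitute, consistent with~\cite{Clark3,Clark4}, is to exploit the decomposition at parameter $r-n$ directly: iterate (IV) $n$ times, write each of the $(b^2)^n$ copies of $\mathbf{M}_{r-n}$ as $\mu$ plus a fluctuation whose cylinder variances are $O(1/n)$ by (II) (using $\upsilon_{r-n}=\mu\times\mu+R(r-n)\rho_{r-n}$), and show by a Lindeberg/chaos-expansion argument --- with remainders killed by the prescribed decay of $R^{(m)}(r-n)$ --- that the joint law of the cylinder masses of $\mathbf{M}_r$ is forced. Your construction half (critical-window discrete approximation, consistency of cylinder weights, recursion for the two-point function) is sound in outline; it is the passage from moments to laws, and the reading of (II)--(III) as asymptotic rather than exact data, that need repair.
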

\end{minipage}
\end{center}

\noindent  \noindent \textbf{(N) Basic properties of $\mathbf{M}_r$:} The random measures $\mathbf{M}_r$ are a.s.\  mutually singular with respect to $\mu$ although they a.s.\ assign positive measure  $\mathbf{M}_r(A)>0$ to every  open set  $A\subset\Gamma$.  As $r\rightarrow -\infty$, $\mathbf{M}_r$ converges to $\mu$  in the sense that for any $F\in L^2(\Gamma,\mu  )$ the random variable $\int_{\Gamma}F(p)\mathbf{M}_r(dp) $ converges in $L^2$ to the constant $\int_{\Gamma}F(p)\mu(dp) $.\vspace{.2cm}

\noindent  \noindent \textbf{(O) Total mass variance function:} The variance $R(r):=\textup{Var}\big(\mathbf{M}_{r}(\Gamma) \big)$ is a  continuous increasing function $R:\R\rightarrow \R_+$   satisfying the recursive relation:\, $ \frac{1}{b}\big[\big(1+R(r)\big)^b-1\big]= R(r+1) $ for all $r\in \R$ and having the following vanishing asymptotics  as $r\rightarrow -\infty$:
$$  R(r)\,=\,-\frac{ \kappa^2 }{ r }\,+\, \frac{ \kappa^2\eta\log(-r) }{ r^2 }\,+\,\mathit{O}\bigg( \frac{\log^2(-r)}{r^3} \bigg) \,\text{ for constants $\,\kappa:=\sqrt{\frac{2}{b-1}}\,$ and $\,\eta:=\frac{b+1}{3(b-1)}$}\,. $$

\noindent  \noindent \textbf{(P) Correlation measure:}  The correlation measure $(\Gamma\times \Gamma, \upsilon_r   )$ is determined by assigning products, $\mathbf{p}\times\mathbf{q}$, of cylinder sets $\mathbf{p},\mathbf{q}\in \Gamma_{n}$  weight $
 \upsilon_{r}(\mathbf{p}\times \mathbf{q}   )\,=\, \frac{1}{|\Gamma_n|^2} \big(1+R(r-n)\big)^{N_n(\mathbf{p},\mathbf{q}) }$, where $N_n(\mathbf{p},\mathbf{q})$ is the number of $k\in \{1,\ldots,b^n\}$ such that $\mathbf{p}(k)=\mathbf{q}(k)$.  The marginals of $\upsilon_r$ are both $(1+R(r))\mu$ although $ \upsilon_{r}$ is not absolutely continuous with respect to $\mu\times \mu$; see (S) below.  \vspace{.2cm}

\noindent  \noindent \textbf{(Q) Intersection-time kernel:}  The kernel $T(p,q)$ in the definition below is $\upsilon_r$-a.e.\ finite by~\cite[Theorem 2.28]{Clark4} and effectively measures the set of intersection times $I_{p,q}=\{ t\in [0,1]\,|\,  p(t)=q(t)\}$.
\begin{center}
\begin{minipage}{.9\linewidth}
\begin{definition}\label{DefKernel} For $p,q\in \Gamma$ define $N_n(p,q)$  as $N_n(\mathbf{p},\mathbf{q})$  for  $(\mathbf{p},\mathbf{q}) =\big([p]_n, [q]_n\big)$. We define $ T(p,q):= \lim_{n\rightarrow \infty}\frac{\kappa^2}{n^2}N_n(p,q)$ when the limit exists and  $ T(p,q):=\infty $  otherwise. 
\end{definition}
\end{minipage}
\end{center}

\noindent  \noindent \textbf{(R) Exponential moments of the intersection-time kernel:} For any $a,r\in \R$, the Radon-Nikodym derivative of $\upsilon_{r+a}$  with respect to $\upsilon_r$ is $\textup{exp}\{aT(p,q)\} $.  The exponential moments of $T(p,q)$ with respect to $\upsilon_r$ are thus finite with $
\int_{\Gamma\times \Gamma}\textup{exp}\{aT(p,q)\} \upsilon_r(dp,dq)=\upsilon_{r+a}(\Gamma\times\Gamma)=1+R(r+a)$. Moreover, since $\mathbb{E}[ \mathbf{M}_r\times  \mathbf{M}_r  ]=\upsilon_r$,   the exponential moments $\int_{\Gamma\times \Gamma}\textup{exp}\{aT(p,q)\} \mathbf{M}_r(dp)\mathbf{M}_r(dq)$ are also a.s. finite. \vspace{.2cm}

\noindent  \noindent \textbf{(S) Lebesgue decomposition of the correlation measure:} The Lebesgue decomposition of $\upsilon_r$ with respect to $\mu\times \mu$ has the form $
\upsilon_r=\mu\times \mu+R(r)\rho_r $,
where $\rho_r$ is a probability measure supported on the set of pairs $(p,q)\in \Gamma\times \Gamma$ with $ 0<T(p,q)<\infty$, and the product  $\mu\times \mu$  is supported on the set of pairs with $T(p,q)=0$ and, in fact, for which $N_n(p,q)$ in Definition~\ref{DefKernel} is zero for  large  $n\in \mathbb{N}$.\vspace{.2cm}

\noindent  \noindent \textbf{(T) The  product $\mathbf{M}_r\times \mathbf{M}_r  $:} For any $r\in \R$, the product  $\mathbf{M}_r\times \mathbf{M}_r  $ is a.s.\  supported on the set of pairs $(p,q)\in \Gamma\times \Gamma$ such that $T(p,q)<\infty$.  The following lemma implies that $\mathbf{M}_r$-a.e.\ path has nontrivial intersection set with a $\mathbf{M}_r$-nonnegligible portion of the path space $\Gamma$:
\begin{center}
\begin{minipage}{.9\linewidth}
\begin{lemma}[Theorem 2.28 of~\cite{Clark4}]\label{LemmaInt} Given $p\in \Gamma$ define $\mathbf{\widehat{s}}_p$ as the set of $q\in \Gamma$ such that $T(p,q)>0$.  The random measure $\mathbf{M}_r$ a.s.\ satisfies that $\mathbf{M}_r(\mathbf{\widehat{s}}_p)>0$ for  $\mathbf{M}_r$-a.e.\ $p\in \Gamma$. 
\end{lemma}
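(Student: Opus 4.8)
The plan is to reduce the a.s./a.e.\ statement to the vanishing of a single expectation and then exploit the exact self-similarity of the family $(\mathbf{M}_r)_{r\in\R}$. Writing $G_r^c:=\{p\in\Gamma:\mathbf{M}_r(\mathbf{\widehat{s}}_p)=0\}$ for the set of ``bad'' paths and $U_r:=\mathbf{M}_r(G_r^c)$, Fubini's theorem shows that the conclusion is equivalent to $u_r:=\mathbb{E}[U_r]=0$. First I would record the behaviour of the kernel under the canonical decomposition $\Gamma\equiv\bigcup_{i=1}^b\bigtimes_{j=1}^b\Gamma$: since $N_n(p,q)=\mathbf{1}[i_p=i_q]\sum_{j=1}^bN_{n-1}(p_j,q_j)$ for first-generation branch indices $i_p,i_q$ and segment sub-paths $p_j,q_j$, Definition~\ref{DefKernel} gives the additivity $T(p,q)=\mathbf{1}[i_p=i_q]\sum_{j=1}^bT(p_j,q_j)$. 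Consequently $\mathbf{\widehat{s}}_p=\bigcup_{j}\{q:i_q=i_p,\ q_j\in\mathbf{\widehat{s}}_{p_j}\}$, and feeding this into the product structure of property (IV) of Theorem~\ref{ThmExist} yields the key combinatorial fact that a path is bad at level $r+1$ if and only if all $b$ of its first-generation sub-paths are bad at level $r$. Evaluating $\mathbf{M}_{r+1}$ on the bad set branch by branch then gives the distributional recursion $U_{r+1}\stackrel{d}{=}\tfrac1b\sum_{i=1}^b\prod_{j=1}^bU_r^{(i,j)}$ with i.i.d.\ copies $U_r^{(i,j)}$, and taking expectations collapses this to the scalar recursion $u_{r+1}=u_r^{\,b}$.

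The scalar recursion already does most of the remaining work. Because $u_r\in[0,1]$, it is non-increasing in $r$, so $u_{-\infty}:=\lim_{r\to-\infty}u_r$ exists. Moreover, if $u_{r_0}\in(0,1)$ for even a single $r_0$ then $u_{r_0-n}=u_{r_0}^{1/b^n}\to1$, forcing $u_{-\infty}=1$; contrapositively, $u_{-\infty}<1$ forces $u_r\in\{0,1\}$ for all $r$ and hence, using monotonicity, $u_r\equiv0$. Thus the entire lemma reduces to the single quantitative statement $\liminf_{r\to-\infty}\mathbb{E}[\mathbf{M}_r(\Gamma\setminus G_r^c)]>0$, i.e.\ that the $\mathbf{M}_r$-mass of the ``good'' paths does not disappear in the weak-disorder limit. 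This is the subtle direction, since $\mathbf{M}_r$ converges weakly to $\mu$ and $\mu\times\mu$ charges only $\{T=0\}$ by property (L)/(S); the resolution is that $\mathbf{M}_r$ is mutually singular to $\mu$ for every finite $r$ and retains nontrivial intersection structure on its singular support.

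To produce this seed I would pass to the Palm (spine) measure $\widehat{\mathbb{P}}(d\omega,dp):=\mathbf{M}_r(\omega,dp)\,\mathbb{P}(d\omega)$, a probability measure on $\Omega\times\Gamma$ because $\mathbb{E}[\mathbf{M}_r]=\mu$ is normalized. Under $\widehat{\mathbb{P}}$ one has $1-u_r=\widehat{\mathbb{P}}(\mathbf{M}_r(\mathbf{\widehat{s}}_p)>0)$, and property (S) with $\mathbb{E}[\mathbf{M}_r\times\mathbf{M}_r]=\upsilon_r$ gives the first moment $\widehat{\mathbb{E}}[\mathbf{M}_r(\mathbf{\widehat{s}}_p)]=\upsilon_r(\{T>0\})=R(r)$. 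The Paley--Zygmund inequality then yields
\[
1-u_r\,\geq\,\frac{\big(\widehat{\mathbb{E}}[\mathbf{M}_r(\mathbf{\widehat{s}}_p)]\big)^2}{\widehat{\mathbb{E}}\big[\mathbf{M}_r(\mathbf{\widehat{s}}_p)^2\big]}\,=\,\frac{R(r)^2}{\displaystyle\int_{\Gamma^3}\mathbf{1}[T(p,q)>0]\,\mathbf{1}[T(p,q')>0]\,\upsilon_r^{(3)}(dp,dq,dq')}\,,
\]
where $\upsilon_r^{(3)}:=\mathbb{E}[\mathbf{M}_r\times\mathbf{M}_r\times\mathbf{M}_r]$ is the third correlation measure. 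The lemma follows once this denominator is shown to be $O(R(r)^2)$ as $r\to-\infty$, uniformly, so that the right-hand side stays bounded away from $0$.

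The hard part is precisely this three-point estimate. The naive expectation is that the two constraints ``$p$ meets $q$'' and ``$p$ meets $q'$'' decouple, each costing a factor $R(r)\to0$ and producing the desired order $R(r)^2$; the danger is a ``diagonal'' contribution in which $q$ and $q'$ intersect $p$ along a common locus, which could scale only like $R(r)$ and thus dominate, rendering Paley--Zygmund vacuous. Controlling it requires an explicit description of $\upsilon_r^{(3)}$ through the same hierarchical recursion that defines $\upsilon_r$ in property (P)---tracking the joint coincidence counts of three paths---and then extracting its weak-disorder asymptotics via the expansion $R(r)=\kappa^2/(-r)+O(r^{-2}\log(-r))$ of property (O). I expect essentially all of the analytic difficulty to be concentrated here, with the Fubini reduction, the self-similar recursion, and the Palm/Paley--Zygmund steps being comparatively routine.
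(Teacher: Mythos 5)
You should first be aware that this paper never proves Lemma~\ref{LemmaInt}: it is imported wholesale from~\cite{Clark4} (Theorem 2.28 there), so there is no internal proof to measure your argument against, and your proposal has to stand on its own. Its skeleton is sound and well executed. The additivity $T(p,q)=\mathbf{1}[i_p=i_q]\sum_{j}T(p_j,q_j)$ does follow from Definition~\ref{DefKernel} and the cylinder-count recursion; combined with property (IV) of Theorem~\ref{ThmExist} and the a.s.\ positivity of the total masses (part (N) of Section~\ref{SecCDRP}, needed to make ``bad at level $r+1$ iff all sub-paths bad at level $r$'' an honest equivalence) it gives $U_{r+1}\stackrel{d}{=}\frac1b\sum_i\prod_jU_r^{(i,j)}$, hence $u_{r+1}=u_r^{\,b}$, and your fixed-point dichotomy correctly reduces the lemma to $\limsup_{r\to-\infty}u_r<1$. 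The Palm/Paley--Zygmund step is also correct, including the identity $\widehat{\mathbb{E}}[\mathbf{M}_r(\mathbf{\widehat{s}}_p)]=\upsilon_r(\{T>0\})=R(r)$ from property (S).

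The genuine gap is that the proof stops exactly where the content begins: the bound $\widehat{\mathbb{E}}\big[\mathbf{M}_r(\mathbf{\widehat{s}}_p)^2\big]=O\big(R(r)^2\big)$ is asserted as the remaining task but never proved, and nothing available in this paper gives you access to it. The three-point measure $\upsilon_r^{(3)}=\mathbb{E}[\mathbf{M}_r\times\mathbf{M}_r\times\mathbf{M}_r]$ is not constructed here --- Theorem~\ref{ThmExist} supplies only the two-point measure $\upsilon_r$ and finiteness of total-mass moments, with no structural (Lebesgue-type) decomposition of higher correlations relative to the events $\{T(p,q)>0\}\cap\{T(p,q')>0\}$ --- so you would have to build the hierarchical recursion for $\upsilon_r^{(3)}$ and extract its $r\to-\infty$ asymptotics yourself. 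Your worry about the diagonal term is the right one, and heuristically it resolves favorably: one renormalization step forces all three paths into a common branch at cost $b\cdot b^{-3}=b^{-2}$, so the doubly-constrained mass $V(r):=\upsilon_r^{(3)}(\{T(p,q)>0,\,T(p,q')>0\})$ should satisfy roughly $V(r+1)\approx\frac1bV(r)+\frac{b-1}{b}R(r)^2$, whose solution is $O(R(r)^2)$ because the contraction factor $1/b$ prevents the diagonal from dominating the independent-intersection scale. But converting this heuristic into an estimate is essentially the same moment analysis carried out in~\cite{Clark4}, i.e.\ it is the lemma's actual substance. As written, your proposal is a correct and useful reduction, not a proof.
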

\end{minipage}
\end{center}

\vspace{.1cm}

\noindent  \noindent \textbf{(U) A Hilbert-Schmidt operator defined by the intersection-time kernel:} The following theorem characterizes the operator on $L^2(\Gamma,\mathbf{M}_r)$ defined by integrating against the kernel $T(p,q)$.
\begin{center}
\begin{minipage}{.9\linewidth}
\begin{theorem}[Theorem 2.42 in~\cite{Clark4}]\label{ThmKernel} For a.e.\ realization of $\mathbf{M}_r$, the linear map,  $\mathbf{T}_{ \mathbf{M}_r }$, on $L^2(\Gamma,\mathbf{M}_r)$ defined by $(\mathbf{T}_{ \mathbf{M}_r }\psi)(p)=\int_{\Gamma}T(p,q)\psi(q)\mathbf{M}_r(dq)   $ has the properties below.
\begin{enumerate}[(i)]
\item $\mathbf{T}_{ \mathbf{M}_r }$ is Hilbert-Schmidt but not traceclass.

\item $\mathbf{T}_{ \mathbf{M}_r }=\hat{Y}_{ \mathbf{M}_r }\hat{Y}_{ \mathbf{M}_r }^*    $ for a compact  operator $\hat{Y}_{ \mathbf{M}_r }:L^2(D,\vartheta_{\mathbf{M}_r })\rightarrow L^2(\Gamma,\mathbf{M}_r)$ depending measurably on $\mathbf{M}_r $, where $(D,\vartheta_{\mathbf{M}_r })$ is a  Borel measure having total mass, $\vartheta_{\mathbf{M}_r }(D)$, equal to $\int_{\Gamma\times \Gamma}T(p,q)\mathbf{M}_r(dp)\mathbf{M}_r(dq)$ and expectation $\mathbb{E}\big[ \vartheta_{\mathbf{M}_r }]=R'(r)\nu$.

\item The  operator $\hat{Y}_{ \mathbf{M}_r }$ sends the constant  function $1_{D}$ to $\mathbf{t}_{ \mathbf{M}_r }(p):= \int_{\Gamma}T(p,q)\mathbf{M}_r(dq)$, which is  $\mathbf{M}_r$-a.e.\ positive for a.e.\ realization of $\mathbf{M}_r$  by Lemma~\ref{LemmaInt}.

\end{enumerate}

\end{theorem}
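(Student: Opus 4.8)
The plan is to extract all three assertions from the intersection-time kernel $T$, the correlation identity $\mathbb{E}[\mathbf{M}_r\times\mathbf{M}_r]=\upsilon_r$, and the self-similar recursion (IV) of Theorem~\ref{ThmExist}. First I record positivity: each finite-level kernel $\frac{\kappa^2}{n^2}N_n(p,q)$ is a Gram kernel, since
\[
N_n(p,q)=\sum_{k=1}^{b^n}\sum_{e\in E_n}\mathds{1}\big[[p]_n(k)=e\big]\,\mathds{1}\big[[q]_n(k)=e\big],
\]
so the integral operators with these kernels are positive and $\mathbf{T}_{\mathbf{M}_r}$ inherits positivity as their limit. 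For the Hilbert--Schmidt bound I control the first moment of the squared HS norm,
\[
\mathbb{E}\big[\|\mathbf{T}_{\mathbf{M}_r}\|_{HS}^2\big]=\mathbb{E}\Big[\int_{\Gamma\times\Gamma}T(p,q)^2\,\mathbf{M}_r(dp)\mathbf{M}_r(dq)\Big]=\int_{\Gamma\times\Gamma}T(p,q)^2\,\upsilon_r(dp,dq),
\]
and bound $T^2\le 4e^{-2}e^{T}$ to obtain $\le 4e^{-2}\big(1+R(r+1)\big)<\infty$ by property (R). Hence $\mathbf{T}_{\mathbf{M}_r}$ is a.s.\ Hilbert--Schmidt, in particular compact.

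For the factorization (ii) I reuse the Gram representation. With $u_{k,e}(p):=\mathds{1}[[p]_n(k)=e]$, the map $\hat{Y}_n\delta_{(k,e)}:=\frac{\kappa}{n}u_{k,e}$ from $\ell^2$ of index pairs into $L^2(\Gamma,\mathbf{M}_r)$ satisfies $\hat{Y}_n\hat{Y}_n^*=\mathbf{T}^{(n)}$, the operator with kernel $\frac{\kappa^2}{n^2}N_n$. Reorganizing the index by its spatial cell $e\subset D$ turns $\ell^2$ into $L^2(D,\vartheta^{(n)}_{\mathbf{M}_r})$, where $\vartheta^{(n)}_{\mathbf{M}_r}$ assigns cell $e\in E_n$ the mass $\frac{\kappa^2}{n^2}\sum_k(\int u_{k,e}\,d\mathbf{M}_r)^2$; by construction $\hat{Y}_n^*1_\Gamma=1_D$ and $\vartheta^{(n)}_{\mathbf{M}_r}(D)=\int\frac{\kappa^2}{n^2}N_n\,d(\mathbf{M}_r\times\mathbf{M}_r)$. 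Passing to $n\to\infty$ — using the $L^0(\mathbf{M}_r\times\mathbf{M}_r)$ convergence $\frac{\kappa^2}{n^2}N_n\to T$ of Definition~\ref{DefKernel} together with the exponential-moment domination of (R) to secure HS-convergence $\mathbf{T}^{(n)}\to\mathbf{T}_{\mathbf{M}_r}$ and weak convergence $\vartheta^{(n)}_{\mathbf{M}_r}\to\vartheta_{\mathbf{M}_r}$ — produces the compact $\hat{Y}_{\mathbf{M}_r}$ with $\hat{Y}_{\mathbf{M}_r}\hat{Y}_{\mathbf{M}_r}^*=\mathbf{T}_{\mathbf{M}_r}$ (compactness of $\hat{Y}_{\mathbf{M}_r}$ being automatic from that of $\mathbf{T}_{\mathbf{M}_r}$ via polar decomposition). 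The limiting total mass is $\int T\,d(\mathbf{M}_r\times\mathbf{M}_r)$, and taking expectations of $\vartheta^{(n)}_{\mathbf{M}_r}(e)$ through the correlation formula (P) and the asymptotics (O) identifies $\mathbb{E}[\vartheta_{\mathbf{M}_r}]=R'(r)\nu$, consistently with $\mathbb{E}[\int T\,d\mathbf{M}_r^2]=\int T\,d\upsilon_r=R'(r)$. Assertion (iii) then follows at once: from $\hat{Y}_n^*1_\Gamma=1_D$ we get $\hat{Y}_n 1_D=\mathbf{T}^{(n)}1\to\mathbf{t}_{\mathbf{M}_r}$, so $\hat{Y}_{\mathbf{M}_r}1_D=\mathbf{t}_{\mathbf{M}_r}$, whose $\mathbf{M}_r$-a.e.\ positivity is exactly Lemma~\ref{LemmaInt}.

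The remaining and most delicate point is that $\mathbf{T}_{\mathbf{M}_r}$ is a.s.\ \emph{not} trace class. Let $\mathcal{E}_n$ be the conditional-expectation projection of $L^2(\Gamma,\mathbf{M}_r)$ onto generation-$n$ cylinder functions; since $\mathcal{E}_n\uparrow I$ and $\mathbf{T}_{\mathbf{M}_r}\ge 0$, one has $\Tr(\mathbf{T}_{\mathbf{M}_r})=\lim_n\Tr(\mathbf{T}_{\mathbf{M}_r}\mathcal{E}_n)$ increasingly. Iterating (IV), the restriction of $\mathbf{M}_r$ to a cylinder $\mathbf{p}\in\Gamma_n$ is, in distribution, $b^{-c_n}$ times a product of $b^n$ independent copies of $\mathbf{M}_{r-n}$ (with $c_n=(b^n-1)/(b-1)$ and $|\Gamma_n|=b^{c_n}$), while within $\mathbf{p}$ the kernel splits as $T(p,q)=\sum_{k=1}^{b^n}T(p^{(k)},q^{(k)})$ over the $b^n$ sub-blocks. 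Evaluating $\Tr(\mathbf{T}_{\mathbf{M}_r}\mathcal{E}_n)=\sum_{\mathbf{p}\in\Gamma_n}\mathbf{M}_r(\mathbf{p})^{-1}\int_{\mathbf{p}\times\mathbf{p}}T\,d\mathbf{M}_r^2$ with this product structure, independence across sub-blocks and $\mathbb{E}[\mathbf{M}_{r-n}(\Gamma)]=1$ collapse the first moment to
\[
\mathbb{E}\big[\Tr(\mathbf{T}_{\mathbf{M}_r}\mathcal{E}_n)\big]=b^{n}\,\mathbb{E}\Big[\tfrac{1}{\mathbf{M}_{r-n}(\Gamma)}\int_{\Gamma\times\Gamma}T\,d\mathbf{M}_{r-n}^2\Big]\;\asymp\; b^{n}\,R'(r-n)\;\asymp\;\frac{b^n}{n^2}\;\longrightarrow\;\infty,
\]
using $\mathbb{E}[\int T\,d\mathbf{M}_{s}^2]=R'(s)$, the concentration $\mathbf{M}_{r-n}(\Gamma)\to1$, and $R'(s)\sim\kappa^2/s^2$ from (O).

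This shows $\mathbb{E}[\Tr(\mathbf{T}_{\mathbf{M}_r})]=\infty$, but since an infinite mean is compatible with an a.s.\ finite trace, the genuine obstacle — which I expect to be the main technical burden of the theorem — is the upgrade to an almost-sure statement. I would do this by a matching second-moment estimate $\mathrm{Var}\big(\Tr(\mathbf{T}_{\mathbf{M}_r}\mathcal{E}_n)\big)=o\big(\mathbb{E}[\Tr(\mathbf{T}_{\mathbf{M}_r}\mathcal{E}_n)]^2\big)$, feasible because the $b^{c_n+n}$ contributing terms are assembled from the finest-level independent copies with controlled overlap; this forces $\Tr(\mathbf{T}_{\mathbf{M}_r}\mathcal{E}_n)\to\infty$ in probability and hence, by monotonicity of the a.s.\ limit, $\Tr(\mathbf{T}_{\mathbf{M}_r})=\infty$ almost surely. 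As a structural complement, the one-step recursion gives the distributional identity $1-p_r=(1-p_{r-1})^{b^2}$ for $p_r:=\mathbb{P}\big(\Tr(\mathbf{T}_{\mathbf{M}_r})=\infty\big)$, whose backward propagation shows that the value $p_{r_0}=1$ at any single level forces $p_r=1$ for all $r$; thus it suffices to establish the almost-sure divergence for one value of $r$.
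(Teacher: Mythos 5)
You should first be aware that this paper contains no proof of this statement: it is imported verbatim as Theorem 2.42 of~\cite{Clark4}, and the only internal material bearing on it is the hierarchical decomposition of $(\vartheta_{\mathbf{M}_r},\hat{Y}_{\mathbf{M}_r})$ quoted in Proposition~\ref{PropHierSymm}. So there is no in-paper proof to compare against, and your proposal must be judged on its own terms. Much of your skeleton is sound and consistent with the structure the companion paper advertises: the Gram representation of $N_n$, the forced identification $\vartheta^{(n)}(e)=\frac{\kappa^2}{n^2}\big(\int u_e\,d\mathbf{M}_r\big)^2$ from the two constraints $\hat{Y}_n\hat{Y}_n^*=\mathbf{T}^{(n)}$ and $\hat{Y}_n^*1_\Gamma=1_D$, the Hilbert--Schmidt bound via $T^2\leq 4e^{-2}e^{T}$ together with (R), the identity $\mathbb{E}\big[\int T\,d(\mathbf{M}_r\times\mathbf{M}_r)\big]=\frac{d}{da}\big(1+R(r+a)\big)\big|_{a=0}=R'(r)$, the derivation $\hat{Y}_n1_D=\mathbf{T}^{(n)}1_\Gamma$, and the logic that a monotone sequence $\Tr(\mathbf{T}_{\mathbf{M}_r}\mathcal{E}_n)\uparrow\Tr(\mathbf{T}_{\mathbf{M}_r})$ diverging in probability forces an a.s.\ infinite trace. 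The zero--one propagation $1-p_{r+1}=(1-p_r)^{b^2}$ is also a nice structural observation (it needs the weights $\big(\prod_{\ell\neq j}\mathbf{M}_r^{(i,\ell)}(\Gamma)\big)^2$ in Proposition~\ref{PropHierSymm} to be a.s.\ positive, which (N) supplies).

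There are, however, two genuine gaps, and they sit exactly at the decisive points. First, in (ii) the passage $n\to\infty$ \emph{is} the theorem, and you only assert it. The random measures $\vartheta^{(n)}$ are not a martingale in $n$, so their a.s.\ weak convergence to a limit $\vartheta_{\mathbf{M}_r}$ does not come for free; your appeal to ``exponential-moment domination of (R)'' conflates exponential moments of the \emph{limit} kernel $T$ under $\upsilon_r$ with the uniform integrability of the prelimit quantities $\big(\frac{\kappa^2}{n^2}N_n\big)^2$ under $\mathbf{M}_r\times\mathbf{M}_r$, a maximal-type control over $n$ that is not among the quoted properties. Likewise the operators $\hat{Y}_n$ act on the \emph{varying} spaces $L^2(D,\vartheta^{(n)})$, so ``$\hat{Y}_n\to\hat{Y}_{\mathbf{M}_r}$'' has no meaning until you supply an identification or embedding of these spaces (together with measurability in $\mathbf{M}_r$), and none is given. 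Second, in (i) the non-trace-class half rests entirely on the variance bound $\mathrm{Var}\big(\Tr(\mathbf{T}_{\mathbf{M}_r}\mathcal{E}_n)\big)=o\big(\mathbb{E}[\Tr(\mathbf{T}_{\mathbf{M}_r}\mathcal{E}_n)]^2\big)$, which you declare ``feasible'' without any computation; note that the individual summands $\prod_{\ell\neq k}\mathbf{M}_{r-n}^{(e_\ell)}(\Gamma)$ involve $b^n-1$ independent factors and have second moments of order $\big(1+R(r-n)\big)^{b^n-1}\approx e^{b^nR(r-n)}\to\infty$, so the claimed smallness can only come from cancellation across the $b^{c_n}$ coarse paths, i.e.\ precisely the kind of delicate second-moment analysis you are deferring. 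Even your first-moment step $\mathbb{E}\big[\frac{1}{\mathbf{M}_{r-n}(\Gamma)}\int T\,d\mathbf{M}_{r-n}^2\big]\asymp R'(r-n)$ needs an argument: only the lower bound is required, and it can be had by restricting to $\{\mathbf{M}_{r-n}(\Gamma)\leq 2\}$ with Chebyshev giving $\mathbb{P}\big(\mathbf{M}_{r-n}(\Gamma)>2\big)\leq R(r-n)\to 0$, but then a bound on $\mathbb{E}\big[\big(\int T\,d\mathbf{M}_{r-n}^2\big)^2\big]$ uniform in $n$ is also needed and is not provided (negative moments of $\mathbf{M}_{r-n}(\Gamma)$ are certainly not available from the quoted facts). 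In short: correct architecture and correct finite-level algebra, but the two hardest steps --- the construction of the limiting pair $(\vartheta_{\mathbf{M}_r},\hat{Y}_{\mathbf{M}_r})$ and the a.s.\ divergence of the trace --- are named rather than proved.
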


\end{minipage}

\end{center}

\noindent  \noindent \textbf{(V) A remark on renormalization symmetry:} The following proposition describes the hierarchical  relationship of the family of random measures $(\vartheta_{\mathbf{M}_{r}} )_{r\in \R} $ and  compact operators  $(\hat{Y}_{\mathbf{M}_{r}})_{r\in \R} $.
\begin{center}
\begin{minipage}{.9\linewidth}
\begin{proposition}[Sections 2.8 \& 2.9 of~\cite{Clark4}]\label{PropHierSymm}  Let $\{\mathbf{M}_{r}^{(i,j)}\}_{i,j\in\{1,\ldots, b\}}$ be a family of independent copies   of $\big(\Gamma, \mathbf{M}_{r}\big)$ such that  $\mathbf{M}_{r+1}=\frac{1}{b}\sum_{i=1}^b\prod_{j=1}^b\mathbf{M}_{r}^{(i,j)} $ in the sense of property (IV) of Theorem~\ref{ThmExist}.  The measure  $(D,\vartheta_{\mathbf{M}_{r+1} })$ has decomposition
\begin{align*}
\vartheta_{\mathbf{M}_{r+1} }\,=\,\frac{1}{b^2}\bigoplus_{1\leq i,j\leq b  } \bigg(\prod_{\ell \neq j}\mathbf{M}_{r}^{(i,\ell)}(\Gamma)    \bigg)^2  \vartheta_{\mathbf{M}_{r}^{(i,j)} } \hspace{.2cm}\text{under the identification}\hspace{.2cm} D\,\equiv\, \bigcup_{1\leq i,j\leq b}D_{i,j}\,,
\end{align*}
where $D_{i,j}$ is a copy of $D$ corresponding to the first-generation sub-copy of the DHL situated at  the $j^{th}$ segment along the  $i^{th}$ branch.  Similarly,  $\hat{Y}_{\mathbf{M}_{r+1}}$ decomposes as $ \big(\hat{Y}_{\mathbf{M}_{r+1}}\hat{\phi}\big)(p)\,=\,\sum_{j=1}^b   \big(\hat{Y}_{\mathbf{M}_{r}^{(i,j)}}\hat{\phi}^{(i,j)}\big)(p_{j})$,   
where $\hat{\phi}^{(i,j)}\in L^2\big(D_{i,j}, \vartheta_{\mathbf{M}_{r}^{(i,j)} } \big)$ are the components of  $\hat{\phi}\in L^2\big(D, \vartheta_{\mathbf{M}_{r+1} } \big)$, and we identify $p\in \Gamma$ with the $(b+1)$-tuple  $(i; p_1,\ldots, p_b)\in \{1,\ldots,b\}\times \Gamma^b$.\footnotemark

\end{proposition}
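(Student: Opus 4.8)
The plan is to derive both decompositions from a single self-similarity identity for the intersection-time kernel $T$ of Definition~\ref{DefKernel}, and then propagate it through the characterization of $\vartheta_{\mathbf{M}_r}$ and $\hat{Y}_{\mathbf{M}_r}$ in Theorem~\ref{ThmKernel}. Throughout I work on the probability space carrying the coupling $\mathbf{M}_{r+1}=\frac1b\sum_{i=1}^b\prod_{j=1}^b\mathbf{M}_r^{(i,j)}$ of property (IV) of Theorem~\ref{ThmExist}, use the identification $\Gamma\equiv\bigcup_{i}\bigtimes_{j}\Gamma$ to write $p\equiv(i;p_1,\dots,p_b)$, and prove almost sure equalities of the random objects.

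First I would establish the kernel identity
\begin{align*}
T(p,q)\,=\,\mathbf{1}_{\{i=i'\}}\sum_{j=1}^b T(p_j,q_j),\qquad p\equiv(i;p_1,\dots,p_b),\ q\equiv(i';q_1,\dots,q_b).
\end{align*}
This is the one place the fractal geometry enters. From the cylinder description in (H), a level-$(n{+}1)$ segment of $p$ lying in the $j$-th first-generation block carries the address $(i,j)$ followed by the level-$n$ address of the matching segment of $p_j$; hence $[p]_{n+1}$ and $[q]_{n+1}$ agree on that segment exactly when $i=i'$ and $[p_j]_n,[q_j]_n$ agree at the corresponding level-$n$ segment. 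This gives the exact count recursion $N_{n+1}(p,q)=\mathbf{1}_{\{i=i'\}}\sum_{j}N_n(p_j,q_j)$. Writing $\frac{\kappa^2}{(n+1)^2}=\frac{n^2}{(n+1)^2}\cdot\frac{\kappa^2}{n^2}$ and using $n^2/(n{+}1)^2\to1$, the limit defining $T(p,q)$ in Definition~\ref{DefKernel} exists iff each limit defining $T(p_j,q_j)$ exists (when $i=i'$), and then the finitely many values add; the indicator encodes (L), that paths on distinct branches meet only at the shared vertices.

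Next I would substitute this identity and the product form of $\mathbf{M}_{r+1}$ into the operator $\mathbf{T}_{\mathbf{M}_{r+1}}$ of Theorem~\ref{ThmKernel}. For a test function $\psi$ set $\tilde\psi_{i,j}(p_j):=\int_{\Gamma^{b-1}}\psi(i;p_1,\dots,p_b)\prod_{\ell\neq j}\mathbf{M}_r^{(i,\ell)}(dp_\ell)$. The cross terms $i\neq i'$ drop out, and integrating out the $b-1$ spectator coordinates in each block yields, the second line being the case $\psi\equiv1$,
\begin{align*}
\big(\mathbf{T}_{\mathbf{M}_{r+1}}\psi\big)(p)&\,=\,\frac1b\sum_{j=1}^b\big(\mathbf{T}_{\mathbf{M}_r^{(i,j)}}\tilde\psi_{i,j}\big)(p_j), \\
\mathbf{t}_{\mathbf{M}_{r+1}}(p)&\,=\,\frac1b\sum_{j=1}^b\Big(\prod_{\ell\neq j}\mathbf{M}_r^{(i,\ell)}(\Gamma)\Big)\mathbf{t}_{\mathbf{M}_r^{(i,j)}}(p_j).
\end{align*}
Testing the first identity against $\psi$ and substituting $\tilde\psi_{i,j}=\big(\prod_{\ell\neq j}\mathbf{M}_r^{(i,\ell)}(\Gamma)\big)\psi_{i,j}$, with $\psi_{i,j}$ the normalized conditional average, presents $\langle\psi,\mathbf{T}_{\mathbf{M}_{r+1}}\psi\rangle$ as $\frac1{b^2}\sum_{i,j}\big(\prod_{\ell\neq j}\mathbf{M}_r^{(i,\ell)}(\Gamma)\big)^2\langle\psi_{i,j},\mathbf{T}_{\mathbf{M}_r^{(i,j)}}\psi_{i,j}\rangle$; reading off the weight attached to each sub-copy $D_{i,j}$ (equivalently, computing the total mass $\vartheta_{\mathbf{M}_{r+1}}(D)=\langle1,\mathbf{T}_{\mathbf{M}_{r+1}}1\rangle$ block by block) produces precisely the factors $\frac1{b^2}\big(\prod_{\ell\neq j}\mathbf{M}_r^{(i,\ell)}(\Gamma)\big)^2$ of the claimed $\vartheta_{\mathbf{M}_{r+1}}$ decomposition.

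Finally I would verify that the stated operator formula reproduces $\mathbf{T}_{\mathbf{M}_{r+1}}=\hat{Y}_{\mathbf{M}_{r+1}}\hat{Y}_{\mathbf{M}_{r+1}}^*$ and $\hat{Y}_{\mathbf{M}_{r+1}}1_{D}=\mathbf{t}_{\mathbf{M}_{r+1}}$, and conclude by the uniqueness built into the construction of Theorem~\ref{ThmKernel}. Here the component $\hat\phi^{(i,j)}$ of $\hat\phi\in L^2(D,\vartheta_{\mathbf{M}_{r+1}})$ is read through the isometry $L^2(D,\vartheta_{\mathbf{M}_{r+1}})\cong\bigoplus_{i,j}L^2(D_{i,j},\vartheta_{\mathbf{M}_r^{(i,j)}})$ supplied by the $\vartheta$ decomposition, so it carries the factor $\frac1b\prod_{\ell\neq j}\mathbf{M}_r^{(i,\ell)}(\Gamma)$ relative to the naive restriction; with this reading $\hat{Y}_{\mathbf{M}_{r+1}}=A\circ U$ for the block unitary $U$ and the map $A$ with $\big(A(g_{i,j})\big)(p)=\sum_j(\hat{Y}_{\mathbf{M}_r^{(i,j)}}g_{i,j})(p_j)$, whence $\hat{Y}_{\mathbf{M}_{r+1}}\hat{Y}_{\mathbf{M}_{r+1}}^*=AA^*$ reproduces the pointwise formula for $\mathbf{T}_{\mathbf{M}_{r+1}}$ above and $\hat{Y}_{\mathbf{M}_{r+1}}1_{D}=\mathbf{t}_{\mathbf{M}_{r+1}}$ follows from the $\mathbf{t}$ identity. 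The main obstacle I anticipate is exactly this bookkeeping: one must track the measure-dependent weights $\frac1{b^2}\big(\prod_{\ell\neq j}\mathbf{M}_r^{(i,\ell)}(\Gamma)\big)^2$ consistently through the partial integrations and the isometric identification of components, and confirm that the factorization $\mathbf{T}_{\mathbf{M}_{r+1}}=\hat{Y}\hat{Y}^*$ so obtained is the canonical one produced by the construction in~\cite{Clark4} rather than a spurious variant, so that ``satisfies the defining relations'' may be upgraded to ``equals.''
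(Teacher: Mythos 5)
You should first note that this paper never proves Proposition~\ref{PropHierSymm}: it is imported verbatim from Sections 2.8 \& 2.9 of~\cite{Clark4}, so your argument has to stand on its own, and much of it does. The count recursion $N_{n+1}(p,q)=\mathbf{1}_{\{i=i'\}}\sum_{j=1}^b N_n(p_j,q_j)$ is exact (level-$(n{+}1)$ cylinder addresses begin with the pair $(i,j)$, and the junction times lie in $\mathcal{V}$, so no boundary terms arise), and it yields $T(p,q)=\mathbf{1}_{\{i=i'\}}\sum_j T(p_j,q_j)$ wherever the component limits exist; your ``iff'' is slightly overstated, since a sum of nonconvergent nonnegative sequences can converge, but only the direction you actually use is needed. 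The resulting identities for $\mathbf{T}_{\mathbf{M}_{r+1}}$ and $\mathbf{t}_{\mathbf{M}_{r+1}}$ are correct, and your observation that the components $\hat{\phi}^{(i,j)}$ must be the isometric ones, carrying the factor $\frac{1}{b}\prod_{\ell\neq j}\mathbf{M}_{r}^{(i,\ell)}(\Gamma)$ relative to naive restriction, is a sharp and necessary consistency check: with plain restrictions the displayed formula for $\hat{Y}_{\mathbf{M}_{r+1}}$ would contradict $\hat{Y}_{\mathbf{M}_{r+1}}1_D=\mathbf{t}_{\mathbf{M}_{r+1}}$ and would produce $\frac{b}{c_{i,j}^2}$ in place of $\frac{1}{b}$ in the block formula for $\hat{Y}\hat{Y}^*$, where $c_{i,j}=\prod_{\ell\neq j}\mathbf{M}_r^{(i,\ell)}(\Gamma)$.

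The genuine gap is your concluding step. You invoke ``the uniqueness built into the construction of Theorem~\ref{ThmKernel},'' but no such uniqueness is available from what this paper states: Theorem~\ref{ThmKernel} asserts only existence of a pair $(\vartheta_{\mathbf{M}_r},\hat{Y}_{\mathbf{M}_r})$ with $\mathbf{T}_{\mathbf{M}_r}=\hat{Y}_{\mathbf{M}_r}\hat{Y}_{\mathbf{M}_r}^*$, $\hat{Y}_{\mathbf{M}_r}1_D=\mathbf{t}_{\mathbf{M}_r}$, and prescribed total mass and expectation, and these interface relations underdetermine the pair --- for any unitary $V$ of $L^2(D,\vartheta_{\mathbf{M}_r})$ fixing $1_D$, the pair $(\vartheta_{\mathbf{M}_r},\hat{Y}_{\mathbf{M}_r}V)$ satisfies them all. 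Relatedly, ``reading off the weight attached to each sub-copy $D_{i,j}$'' from the quadratic form $\langle\psi,\mathbf{T}_{\mathbf{M}_{r+1}}\psi\rangle=\frac{1}{b^2}\sum_{i,j}c_{i,j}^2\langle\psi_{i,j},\mathbf{T}_{\mathbf{M}_r^{(i,j)}}\psi_{i,j}\rangle$ is circular: that identity makes no reference to $D$ or its sub-copies, so the block structure is imposed by your ansatz rather than deduced, and the measure $\vartheta_{\mathbf{M}_{r+1}}$ is simply not a function of the operator $\mathbf{T}_{\mathbf{M}_{r+1}}$ alone. Closing the argument requires opening the actual construction in~\cite{Clark4}, where $\vartheta_{\mathbf{M}_r}$ is built as an intersection local-time-type measure on $D$ --- a disintegration of $T(p,q)\,\mathbf{M}_r(dp)\mathbf{M}_r(dq)$ over the location $x\in D$ of intersection --- and $\hat{Y}_{\mathbf{M}_r}$ from the associated kernel; at that level your kernel identity makes the decomposition direct bookkeeping, since same-branch pairs localize their intersections in $D_{i,j}$ and the spectator coordinates integrate out to $\big(\prod_{\ell\neq j}\mathbf{M}_r^{(i,\ell)}(\Gamma)\big)^2$. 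As you yourself anticipated, without that input the proposal establishes that your candidate objects satisfy the same relations as $(\vartheta_{\mathbf{M}_{r+1}},\hat{Y}_{\mathbf{M}_{r+1}})$, but cannot upgrade this to equality.
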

\end{minipage}

\end{center}
\footnotetext{Note that the indentification  $\Gamma\equiv \{1,\ldots,b\}\times \Gamma^b$ is equivalent to   $\Gamma\equiv\bigcup_{i=1}^b\Gamma^b$  in (IV) of Theorem~\ref{ThmExist}.}

\section{Proof of Theorem~\ref{ThmMain}}\label{SecThm}

Once the relevant definitions are formed, parts (i) and (ii) of Theorem~\ref{ThmMain} (restated in Proposition~\ref{CorRandomShift} below)  follow easily from results in  Section~\ref{SecGMCDef} and Section~\ref{SecCDRP}.  The proof of part (iii) of Theorem~\ref{ThmMain}  will depend on the uniqueness of the family of laws $(\mathbf{M}_{r})_{r\in \R}$ satisfying properties (I)-(IV) of Theorem~\ref{ThmExist} and on the uniqueness of  subcritical GMC.

\subsection{Constructing the Gaussian field and the conditional GMC}

\begin{definition}\label{DefY} For $r\in \R$, let the measure  $(D,\vartheta_{\mathbf{M}_r})$ and the  map $\hat{Y}_{ \mathbf{M}_r }:L^2(D,\vartheta_{\mathbf{M}_r})\rightarrow L^2(\Gamma,\mathbf{M}_r)$ be defined as in Theorem~\ref{ThmKernel}. Moreover, choose some isometric linear map $\hat{U}_{\mathbf{M}_r}: L^2(D,\vartheta_{\mathbf{M}_r})\rightarrow \mathcal{H}$    depending measurably on $\mathbf{M}_r$.\footnote{$\hat{U}_{\mathbf{M}_r}$ is trivial to construct using Gram-Schmidt to find an orthonormal basis of $L^2(\Gamma,\mathbf{M}_r)$; see Appendix~\ref{SecA}.}   We define $Y_{\mathbf{M}_r}:\mathcal{H}\rightarrow L^2(\Gamma,\mathbf{M}_r)$  as $Y_{\mathbf{M}_r}:=  \hat{Y}_{\mathbf{M}_r} \hat{U}_{\mathbf{M}_r}^*$.

\end{definition}

\begin{definition}\label{DefFields}  Let $\hat{U}_{\mathbf{M}_r}$ and  $Y_{\mathbf{M}_r}$ be defined as in Definition~\ref{DefY} and   $W:\mathcal{H}\rightarrow L^2(\Omega, \mathscr{F},\mathbb{P})  $ be a standard Gaussian random vector   independent of $\mathbf{M}_r$. 
 For a  realization of $(\Gamma, \mathbf{M}_r)$, define the Gaussian fields

\begin{enumerate}[(i)]

\item  $\hat{ W }_{\mathbf{M}_r}:L^{2}(D, \vartheta_{\mathbf{M}_r})\rightarrow  L^2(\Omega, \mathscr{F},\mathbb{P})$ as  $ \hat{ W }_{\mathbf{M}_r}:= W\hat{U}_{\mathbf{M}_r}   $ and

\item $\mathbf{W}_{\mathbf{M}_r}:L^{2}(\Gamma, \mathbf{M}_{r})\rightarrow L^2(\Omega, \mathscr{F},\mathbb{P})$ as  $\mathbf{W}_{\mathbf{M}_r}= W Y_{\mathbf{M}_r}^*    $.

\end{enumerate}

\end{definition}

\begin{remark}\label{RemarkCorrKernel} For a.e.\ realization of $\mathbf{M}_{r}$, the operator $\hat{ W }_{\mathbf{M}_{r}}$ defines a white-noise field on $D$ with variance measure $\vartheta_{\mathbf{M}_r}$.  The operator $\mathbf{W}_{\mathbf{M}_r}\equiv\{\mathbf{W}_{\mathbf{M}_r}(p)\}_{p\in \Gamma}$ a.s.\ defines a  Gaussian field over $(\Gamma,\mathbf{M}_r)$ with covariance operator $\textbf{T}_{\mathbf{M}_r}=Y_{\mathbf{M}_r}Y_{\mathbf{M}_r}^*=\hat{Y}_{\mathbf{M}_r}\hat{Y}_{\mathbf{M}_r}^*$ having kernel $T(p,q)$. 

\end{remark}

\begin{proposition}\label{CorRandomShift}  Let $Y_{\mathbf{M}_r}:\mathcal{H}\rightarrow L^2(\Gamma,\mathbf{M}_r)$ be as in  Definition~\ref{DefY} and $W:\mathcal{H}\rightarrow L^2\big(\Omega, \mathscr{F},\mathbb{P}\big)$ be a standard Gaussian random vector independent of $\mathbf{M}_r$.   For any  $a\in \R_+$,
\begin{enumerate}[(i)]
\item 
 the operator $\sqrt{a}Y_{\mathbf{M}_r}$ is a randomized shift for a.e.\ realization of $\mathbf{M}_r$, and

\item  there is a unique conditional GMC $\mathds{M}_{r,a}$    over  $(W,\sqrt{a}Y_{\mathbf{M}_r})$ with conditional expectation $\mathbf{M}_r$.

\end{enumerate}

\end{proposition}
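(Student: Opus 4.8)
The plan is to derive both parts of Proposition~\ref{CorRandomShift} directly from the machinery assembled in Section~\ref{SecGMCDef} together with the structural facts (R), (U) recorded in Section~\ref{SecCDRP}, treating the realization of $\mathbf{M}_r$ as fixed (we work on the a.s.\ event where the operator $\mathbf{T}_{\mathbf{M}_r}$ has the Hilbert-Schmidt factorization of Theorem~\ref{ThmKernel}). The key observation is that $Y_{\mathbf{M}_r}=\hat{Y}_{\mathbf{M}_r}\hat{U}_{\mathbf{M}_r}^*$ is a composition of a compact operator with a partial isometry, so $Y_{\mathbf{M}_r}$ is itself compact and, crucially, $Y_{\mathbf{M}_r}Y_{\mathbf{M}_r}^*=\hat{Y}_{\mathbf{M}_r}\hat{U}_{\mathbf{M}_r}^*\hat{U}_{\mathbf{M}_r}\hat{Y}_{\mathbf{M}_r}^*=\hat{Y}_{\mathbf{M}_r}\hat{Y}_{\mathbf{M}_r}^*=\mathbf{T}_{\mathbf{M}_r}$, using that $\hat{U}_{\mathbf{M}_r}$ is an isometry so $\hat{U}_{\mathbf{M}_r}^*\hat{U}_{\mathbf{M}_r}$ is the identity on $L^2(D,\vartheta_{\mathbf{M}_r})$. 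Thus the covariance operator of the field $(W,\sqrt{a}Y_{\mathbf{M}_r})$ is $a\,\mathbf{T}_{\mathbf{M}_r}$, which is Hilbert-Schmidt with integral kernel $a\,T(p,q)$ by Theorem~\ref{ThmKernel}, independent of $\mathbf{M}_r$ as claimed.

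For part (i), I would invoke Corollary~\ref{CorApprox}: a bounded map into $L^2(\Gamma,\mathbf{M}_r)$ whose associated operator $YY^*$ is Hilbert-Schmidt with kernel $T_a(p,q)=a\,T(p,q)$ is a randomized shift provided the exponential-moment condition $\int_{\Gamma\times\Gamma}\exp\{a\,T(p,q)\}\,\mathbf{M}_r(dp)\,\mathbf{M}_r(dq)<\infty$ holds. This is exactly the a.s.\ finiteness asserted in property (R) of Section~\ref{SecCDRP}, where it is derived from $\mathbb{E}[\mathbf{M}_r\times\mathbf{M}_r]=\upsilon_r$ and the identity $\int_{\Gamma\times\Gamma}\exp\{a\,T(p,q)\}\,\upsilon_r(dp,dq)=1+R(r+a)<\infty$. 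Since the expectation of the random variable $\int\exp\{aT\}\,\mathbf{M}_r\,\mathbf{M}_r$ is finite, that variable is a.s.\ finite, and we conclude $\sqrt{a}Y_{\mathbf{M}_r}$ is a randomized shift for a.e.\ realization of $\mathbf{M}_r$.

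For part (ii), I would apply Corollary~\ref{CorShift}, the conditional-GMC specialization of Theorem~\ref{ThmShamov}: existence and uniqueness of a conditional GMC over $(W,\sqrt{a}Y_{\mathbf{M}_r})$ with conditional expectation $\mathbf{M}_r$ is equivalent to $\sqrt{a}Y_{\mathbf{M}_r}$ being a randomized shift for a.e.\ realization of $\mathbf{M}_r$, which is precisely part (i). The independence of $W$ from $\mathbf{M}_r$ and the measurable dependence of $Y_{\mathbf{M}_r}$ on $\mathbf{M}_r$ — both built into Definitions~\ref{DefY} and \ref{DefFields} — are the remaining hypotheses of Corollary~\ref{CorShift}, so they must be checked off but require no work. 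The one point demanding a little care, and the closest thing to an obstacle here, is the measurability of $\hat{U}_{\mathbf{M}_r}$ (and hence of $Y_{\mathbf{M}_r}$) in $\mathbf{M}_r$; this is what licenses applying Corollary~\ref{CorShift} fiberwise over realizations, and it is handled by the explicit Gram-Schmidt construction deferred to Appendix~\ref{SecA} and referenced in Definition~\ref{DefY}. Everything else is a direct citation, so the proposition follows immediately.
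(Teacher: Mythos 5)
Your proposal is correct and follows essentially the same route as the paper: part (i) is obtained by combining the factorization $Y_{\mathbf{M}_r}Y_{\mathbf{M}_r}^*=\hat{Y}_{\mathbf{M}_r}\hat{Y}_{\mathbf{M}_r}^*=\mathbf{T}_{\mathbf{M}_r}$ from Theorem~\ref{ThmKernel} with the a.s.\ finite exponential moments of $T(p,q)$ from (R) and then citing Corollary~\ref{CorApprox}, and part (ii) is a direct application of Corollary~\ref{CorShift}. The extra details you supply (the identity $\hat{U}_{\mathbf{M}_r}^*\hat{U}_{\mathbf{M}_r}=I$, the passage from finite expectation to a.s.\ finiteness, and the measurability of $\hat{U}_{\mathbf{M}_r}$ via Appendix~\ref{SecA}) are all consistent with, and implicit in, the paper's argument.
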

\begin{proof}Part (i): The covariance operator $Y_{\mathbf{M}_r}Y_{\mathbf{M}_r}^*=\hat{Y}_{\mathbf{M}_r}\hat{Y}_{\mathbf{M}_r}^*=\textbf{T}_{\mathbf{M}_r}  $ is a.s.\ Hilbert-Schmidt  with kernel $T(p,q)$ by Theorem~\ref{ThmKernel}.  For  any $a\in \R_+$,  the exponential moment $\int_{\Gamma\times \Gamma}\textup{exp}\{aT(p,q)\}\mathbf{M}_r(dp)\mathbf{M}_r(dq)$ is  finite for a.e.\ realization of $\mathbf{M}_r$  by the remarks in part (R) of Section~\ref{SecCDRP}. By Corollary~\ref{CorApprox} this implies that $\sqrt{a} Y_{\mathbf{M}_r}$ is a.s.\ a randomized shift. 
 \vspace{.3cm}

\noindent Part (ii):  The  compact operator  $\sqrt{a}Y_{\mathbf{M}_r}$ is a.s.\ a randomized shift by  part (i), and hence there is a unique conditional GMC $\mathds{M}_{r,a}\equiv \mathds{M}_{r,a}(  \mathbf{M}_r   , W )$ over $(W,\sqrt{a}Y_{\mathbf{M}_r})$ with conditional expectation $ \mathbf{M}_r$  by Corollary~\ref{CorShift}.

\end{proof}

The following proposition, which we do not use, is a remark about the random Gaussian fields $\mathbf{W}_{\mathbf{M}_r}$ and $\hat{ W }_{\mathbf{M}_r}$  defining bounded linear operators on   $ L^2(\Gamma,\mu) $ and $ L^2(D,\nu) $, respectively. 

\begin{proposition}\label{PropSub} If $\psi\in L^2(\Gamma,\mu) $ and $\hat{\phi}\in L^2(D,\nu) $, then $\psi \in L^2(\Gamma,\mathbf{M}_r)$ and  $\hat{\phi} \in L^2(D,\vartheta_{\mathbf{M}_r})$  for a.e.\ realization of $ \mathbf{M}_r$ since $\mathbb{E}[\mathbf{M}_r]=\mu$ and  $\mathbb{E}[\vartheta_{ \mathbf{M}_r}]=R'(r)\nu$. 
Moreover, the linear maps   $\hat{ W }:L^2(D,\nu)\rightarrow L^2\big(\Omega, \mathscr{F},\mathbb{P}\big)$ and $\mathbf{W}:L^{2}(\Gamma, \mu)\rightarrow  L^2\big(\Omega, \mathscr{F},\mathbb{P}\big)$ defined  through $\hat{ W }\equiv\hat{ W }_{\mathbf{M}_r}$ and $\mathbf{W}\equiv \mathbf{W}_{\mathbf{M}_r}$ have operator norms bounded by $ \sqrt{R'(r)}$.

\end{proposition}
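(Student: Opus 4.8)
I would treat the two membership claims and the two operator-norm bounds separately, conditioning throughout on a realization of $\mathbf{M}_r$ and using that $W$ is a standard Gaussian field independent of $\mathbf{M}_r$, together with the expectation identities $\mathbb{E}[\mathbf{M}_r]=\mu$ and $\mathbb{E}[\vartheta_{\mathbf{M}_r}]=R'(r)\nu$ from Theorem~\ref{ThmKernel}. For the memberships, fix representatives $\psi$ and $\hat\phi$ of classes in $L^2(\Gamma,\mu)$ and $L^2(D,\nu)$. Since $\psi^2$ and $\hat\phi^2$ are nonnegative, Tonelli's theorem and the two identities give $\mathbb{E}\big[\int_\Gamma \psi^2\,d\mathbf{M}_r\big]=\int_\Gamma\psi^2\,d\mu<\infty$ and $\mathbb{E}\big[\int_D\hat\phi^2\,d\vartheta_{\mathbf{M}_r}\big]=R'(r)\int_D\hat\phi^2\,d\nu<\infty$. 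A nonnegative random variable of finite mean is a.s.\ finite, so $\psi\in L^2(\Gamma,\mathbf{M}_r)$ and $\hat\phi\in L^2(D,\vartheta_{\mathbf{M}_r})$ a.s.; the same identities show that $\mu$-null (resp.\ $\nu$-null) sets are a.s.\ $\mathbf{M}_r$-null (resp.\ $\vartheta_{\mathbf{M}_r}$-null), so the choice of representative is immaterial and $\hat{W}$, $\mathbf{W}$ are a.s.\ well defined on $L^2(D,\nu)$ and $L^2(\Gamma,\mu)$.

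The bound for $\hat{W}$ is in fact an identity. Conditioning on $\mathbf{M}_r$, the vector $\hat U_{\mathbf{M}_r}\hat\phi\in\mathcal{H}$ is deterministic and $W$ is an independent standard Gaussian field, so $\mathbb{E}\big[\hat W(\hat\phi)^2\,\big|\,\mathbf{M}_r\big]=\|\hat U_{\mathbf{M}_r}\hat\phi\|_{\mathcal{H}}^2=\|\hat\phi\|_{L^2(D,\vartheta_{\mathbf{M}_r})}^2$, the last equality because $\hat U_{\mathbf{M}_r}$ is isometric (Definition~\ref{DefY}). Taking the full expectation and invoking $\mathbb{E}[\vartheta_{\mathbf{M}_r}]=R'(r)\nu$ gives $\|\hat W(\hat\phi)\|_{L^2(\Omega)}^2=R'(r)\|\hat\phi\|_{L^2(D,\nu)}^2$, whence $\|\hat W\|=\sqrt{R'(r)}$.

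The bound for $\mathbf{W}$ is the only step requiring an inequality, and it is where I expect the work to concentrate. Conditioning on $\mathbf{M}_r$ and using $\mathbf{W}_{\mathbf{M}_r}=WY_{\mathbf{M}_r}^*$ with $Y_{\mathbf{M}_r}Y_{\mathbf{M}_r}^*=\mathbf{T}_{\mathbf{M}_r}$ (Remark~\ref{RemarkCorrKernel}), the conditional variance is $\langle\psi,\mathbf{T}_{\mathbf{M}_r}\psi\rangle_{L^2(\Gamma,\mathbf{M}_r)}=\int_{\Gamma\times\Gamma}\psi(p)T(p,q)\psi(q)\,\mathbf{M}_r(dp)\mathbf{M}_r(dq)$; averaging and using $\mathbb{E}[\mathbf{M}_r\times\mathbf{M}_r]=\upsilon_r$ turns this into $\int_{\Gamma\times\Gamma}\psi(p)T(p,q)\psi(q)\,\upsilon_r(dp,dq)$. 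Since $T\ge 0$ and $T,\upsilon_r$ are symmetric, the elementary bound $\psi(p)\psi(q)\le\tfrac12(\psi(p)^2+\psi(q)^2)$ gives $\mathbb{E}[\mathbf{W}(\psi)^2]\le\int_{\Gamma\times\Gamma}\psi(p)^2T(p,q)\,\upsilon_r(dp,dq)$. I would finish with the identity that the $p$-marginal of the measure $T(p,q)\,\upsilon_r(dp,dq)$ equals $R'(r)\mu$, obtained by differentiating at $a=0$ the relation $\int_{\Gamma\times\Gamma}f(p)e^{aT(p,q)}\upsilon_r(dp,dq)=(1+R(r+a))\int_\Gamma f\,d\mu$ (valid for $f$ a function of $p$ alone, by (P) together with $d\upsilon_{r+a}=e^{aT}d\upsilon_r$ from (R)). Taking $f=\psi^2$ then yields $\mathbb{E}[\mathbf{W}(\psi)^2]\le R'(r)\int_\Gamma\psi^2\,d\mu$, i.e.\ $\|\mathbf{W}\|\le\sqrt{R'(r)}$. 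The main obstacle is precisely this marginal identity: one must justify interchanging $\partial_a$ with the integral, for which the finite exponential moments in (R) supply the domination, and one sees that the symmetrization step is exactly what degrades the sharp equality obtained for $\hat W$ into an inequality for $\mathbf{W}$. Everything else is routine bookkeeping with the isometries $\hat U_{\mathbf{M}_r}$, $W$ and the two expectation identities.
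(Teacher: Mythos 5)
Your proposal is correct and follows essentially the same route as the paper: compute the conditional variance, pass to the correlation measure $\upsilon_r$ via $\mathbb{E}[\mathbf{M}_r\times\mathbf{M}_r]=\upsilon_r$, symmetrize with $\psi(p)\psi(q)\le\tfrac12(\psi(p)^2+\psi(q)^2)$, and invoke the identity that the marginals of $T(p,q)\,\upsilon_r(dp,dq)$ equal $R'(r)\mu$ (which the paper cites from part (R) and you justify by differentiating $d\upsilon_{r+a}=e^{aT}d\upsilon_r$ at $a=0$, with the finite exponential moments supplying the domination), while the $\hat W$ bound via the isometry $\hat U_{\mathbf{M}_r}$ and $\mathbb{E}[\vartheta_{\mathbf{M}_r}]=R'(r)\nu$ is identical to the paper's. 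Your write-up merely makes explicit two points the paper leaves implicit (the Tonelli argument for the membership claims and the interchange of derivative and integral), so no substantive difference remains.
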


\begin{proof} For $\psi \in L^2(\Gamma, \mu)$ notice that by integrating out the field we get the first equality below
\begin{align*}
\mathbb{E}\Big[\big(\mathbf{W}_{ \mathbf{M}_{r} }   (\psi) \big)^2\Big]\,=\,\mathbb{E}\bigg[\int_{ \Gamma\times \Gamma  }\psi(p) \psi(q) T(p,q) \mathbf{M}_{r}(dp) \mathbf{M}_{r}(dp)  \bigg]\,=\,\int_{ \Gamma\times \Gamma  }\psi(p) \psi(q) T(p,q) \upsilon_{r}(dp,dq)\,,
\end{align*}
where the second equality holds by (II) of Theorem~\ref{ThmExist}.
Since $ \psi(p) \psi(q)\leq  \frac{1}{2}\big(| \psi(p)|^2+ |\psi(q)|^2 \big) $ and  the marginals of $T(p,q)\upsilon_r(dp,dq)$ are equal to $R'(r)\mu$ as a consequence of the remarks in part (R) of Section~\ref{SecCDRP}, the above is bounded by $R'(r)\|\psi\|^2_{L^2(\Gamma,\mu)}  $.  Since $\mathbb{E}[\vartheta_{\mathbf{M}_r }]=R'(r)\nu$, the operator $\hat{W}_{ \mathbf{M}_{r} }  $ is a $\sqrt{R'(r)}$-multiple   of an isometry since
if $\hat{\phi}\in L^2(D,\nu)   $
$$\mathbb{E}\Big[\big(\hat{W}_{ \mathbf{M}_{r} }   (\hat{\phi}) \big)^2\Big]\,=\,\mathbb{E}\bigg[\int_{D}\big|\hat{\phi}(x)\big|^2\vartheta_{\mathbf{M}_r }(dx)   \bigg]\,=\, R'(r)\int_{D} \big|\hat{\phi}(x)\big|^2\nu(dx)\,=\,R'(r)\| \hat{\phi}\|^2_{L^2(D,\nu)} \,. $$

\end{proof}

\subsection{Renormalization and the conditional GMC}

The proof of part (iii) of Theorem~\ref{ThmMain} will involve verifying   properties~(I)-(IV) in Theorem~\ref{ThmExist} for the conditional GMC $\mathds{M}_{r-a,a}$.  In particular,  property (IV) requires us to develop some notation for working with the renormalization transforms.

\begin{definition}\label{DefMeasure}  Let $\overline{M}:=\big\{ M^{(i,j)} \big\}_{i,j\in \{1,\ldots, b\}}$ be a family of measures on $\Gamma$. 
 Define $\Upsilon\overline{M}$ as the measure on $\Gamma$ such that
 \begin{align*}
 \Upsilon\overline{M}\,:=\,\frac{1}{b}\sum_{i=1}^b\prod_{j=1}^b M^{(i,j)} \hspace{.5cm}  \text{through the identification} \hspace{.5cm} \Gamma\,\equiv\,\bigcup_{i=1}^b\bigtimes_{j=1}^b \Gamma \,.
 \end{align*}

\end{definition}

 \begin{remark}\label{RemarkHier}   If $\mathbf{\overline{M}}_r:=\big\{\mathbf{M}_{r}^{(i,j)} \big\}_{i,j\in \{1,\ldots,b\}} $ is a family of i.i.d.\ copies of $(\Gamma,\mathbf{M}_{r})$, then property (IV) of Theorem~\ref{ThmExist} implies that $\mathbf{M}_{r+1}$ is equal in law to $\Upsilon \mathbf{\overline{M}}_r  $.  Similarly $\vartheta_{\Upsilon \mathbf{\overline{M}}_r }$ can be decomposed in terms of the measures  $\big(D, \vartheta_{\mathbf{M}_{r}^{(i,j)}} \big)$  as in Proposition~\ref{PropHierSymm}.
 
\end{remark}

\begin{definition}\label{DefCompositeStuffI} Let $W_{i,j  }$ be an independent  family of standard Gaussian random vectors in $\mathcal{H}$ indexed by $\{1,\ldots,b\}^2  $.  Note that the independence is equivalent to $\textup{Span}_{1\leq i,j\leq b}\big\{ \textup{Range}( W_{i,j  } ) \big\}  $ being a Gaussian subspace of $ L^2(\Omega, \mathscr{F},\mathbb{P})$ for which the spaces $\textup{Range}( W_{i,j})$ and $\textup{Range}( W_{I,J})$ are orthogonal when  $(i,j)\neq (I,J)$.  For $\overline{\mathcal{H}}:= \bigoplus_{1\leq i,j\leq b}\mathcal{H}$, we define $\overline{W}:\overline{\mathcal{H}}\rightarrow L^2(\Omega, \mathscr{F},\mathbb{P})   $ as the standard Gaussian random vector 
$$ \overline{W}\,:=\,\bigoplus_{1\leq i,j \leq b}W_{i,j  } \,.  $$

\end{definition}

\begin{definition}\label{DefCompositeStuffII}  Fix $r\in \R$ and 
 let $\overline{\mathbf{M}}_{r}=\big\{\mathbf{M}_{r}^{(i,j)}\big\}_{i,j\in\{1,\ldots,b\}}$ be a family of independent copies of the random measure $(\Gamma,\mathbf{M}_{r})$ and  $Y_{\mathbf{M}_{r}^{(i,j)}}:\mathcal{H}\rightarrow L^2\big(\Gamma,\mathbf{M}_{r}^{(i,j)}\big)$ be the corresponding copies of the operator $Y_{\mathbf{M}_{r}}$ defined as in (ii) of Definition~\ref{DefY}. For $\overline{\mathcal{H}}:= \bigoplus_{1\leq i,j\leq b}\mathcal{H}$, we
 define the linear operator $\overline{Y}_{\mathbf{\overline{M}}_r}:\overline{\mathcal{H}}\rightarrow L^2\big(\Gamma,\Upsilon\overline{\mathbf{M}}_{r}\big)    $ to act on $ \phi=\oplus_{1\leq i,j\leq b} \phi^{(i,j)}$ for $\phi^{(i,j)}\in \mathcal{H}$ as
$$ \big(\overline{Y}_{\mathbf{\overline{M}}_r}\phi\big)(p)\,:=\,     \sum_{j=1}^{b}\big(Y_{\mathbf{M}_{r}^{(i,j)}}\phi^{(i,j)} \big)(p_j) \,, $$
where $p\in \Gamma$ is identified with the $(b+1)$-tuple  $p=(i; p_1,\ldots, p_b)\in \{1,\ldots, b\}\bigtimes_{j=1}^b \Gamma$.

\end{definition}

\begin{lemma}\label{CorHier} Fix $a\in \R_+$ and $r\in \R$. Let $\overline{Y}_{\mathbf{\overline{M}}_r}$ be defined through a family of random measures $\overline{\mathbf{M}}_{r}$  as in Definition~\ref{DefCompositeStuffII}.  Moreover, let $\overline{W}$ be a copy of the standard Gaussian random vector in $\overline{\mathcal{H}}$ defined as in Definition~\ref{DefCompositeStuffI} and independent of $\overline{\mathbf{M}}_{r}$.

\begin{enumerate}[(i)]
\item For a.e.\ realization of $\overline{\mathbf{M}}_{r}$, the operator $\sqrt{a}\overline{Y}_{\mathbf{\overline{M}}_r}:\overline{\mathcal{H}}\rightarrow L^2(\Gamma,\Upsilon\overline{\mathbf{M}}_{r})  $ is a randomized shift.  

\item The conditional GMC  $\mathds{M}_{r,a}^{\Upsilon}$  over $\big(\overline{W},\sqrt{a}\overline{Y}_{\mathbf{\overline{M}}_r}\big) $ with conditional expectation $\overline{\mathbf{M}}_r$ is equal in law to the conditional GMC on $\big(W, \sqrt{a}Y_{\mathbf{M}_{r+1}}\big) $ with conditional expectation $\mathbf{M}_{r+1}$.
\end{enumerate}

\end{lemma}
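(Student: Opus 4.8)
The plan is to treat the two parts as a straightforward application of the machinery already assembled. For part (i), I would first observe that the covariance operator of the composite field is $\overline{Y}_{\mathbf{\overline{M}}_r}\overline{Y}_{\mathbf{\overline{M}}_r}^*$, and compute its integral kernel explicitly from the defining formula in Definition~\ref{DefCompositeStuffII}. Writing $p=(i;p_1,\dots,p_b)$ and $q=(I;q_1,\dots,q_b)$, the independence of the $W_{i,j}$ (equivalently, the orthogonality of $\textup{Range}(W_{i,j})$ and $\textup{Range}(W_{I,J})$ for $(i,j)\neq(I,J)$ built into Definition~\ref{DefCompositeStuffI}) forces all cross terms to vanish, so the kernel is supported on pairs with $i=I$ and decomposes as $\sum_{j=1}^b T(p_j,q_j)$ on that diagonal block. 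This is exactly the kernel $T$ of $\mathbf{T}_{\Upsilon\overline{\mathbf{M}}_r}$ dictated by the renormalization structure of Proposition~\ref{PropHierSymm} (equivalently Remark~\ref{RemarkHier}), and in particular the kernel is $\Upsilon\overline{\mathbf{M}}_r$-a.e.\ finite and coincides with the intersection-time kernel of part (Q). I would then invoke the exponential-moment bound: by part (R) of Section~\ref{SecCDRP} applied to $\mathbf{M}_{r+1}\stackrel{d}{=}\Upsilon\overline{\mathbf{M}}_r$, the integral $\int_{\Gamma\times\Gamma}\exp\{aT(p,q)\}\,\Upsilon\overline{\mathbf{M}}_r(dp)\,\Upsilon\overline{\mathbf{M}}_r(dq)$ is a.s.\ finite, so Corollary~\ref{CorApprox} gives that $\sqrt{a}\,\overline{Y}_{\mathbf{\overline{M}}_r}$ is a.s.\ a randomized shift.

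For part (ii), once the kernel identity is in hand the two conditional GMCs are built over the same kernel $T$ with conditional expectations $\Upsilon\overline{\mathbf{M}}_r$ and $\mathbf{M}_{r+1}$ respectively, which are equal in law by Remark~\ref{RemarkHier}. The cleanest route is: first confirm that both operators are randomized shifts for a.e.\ realization (part (i) for the left side, Proposition~\ref{CorRandomShift}(i) for $\sqrt{a}Y_{\mathbf{M}_{r+1}}$), so that both conditional GMCs exist and are \emph{unique} by Corollary~\ref{CorShift}. Then I would appeal to uniqueness: a conditional GMC is, by Corollary~\ref{CorShift} and Theorem~\ref{ThmShamov}, determined up to law by the pair (conditional expectation measure, covariance kernel). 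Since $\Upsilon\overline{\mathbf{M}}_r\stackrel{d}{=}\mathbf{M}_{r+1}$ and the covariance kernel is $T$ in both cases, the joint laws of (reference measure, conditional GMC) coincide, giving the asserted equality in law. Care is needed to phrase this at the level of the joint law of the pair $(\text{reference},\text{GMC})$, not merely the marginal law of the GMC, because later uses (verifying property (IV) for $\mathds{M}_{r-a,a}$) will want the coupling to the reference measure.

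The main obstacle is the kernel computation in part (i), specifically verifying that $\overline{Y}_{\mathbf{\overline{M}}_r}\overline{Y}_{\mathbf{\overline{M}}_r}^*$ genuinely has kernel $\sum_{j}T(p_j,q_j)\mathbf{1}_{i=I}$ and that this matches $T$ on $\Upsilon\overline{\mathbf{M}}_r$ under the identification $\Gamma\equiv\bigcup_{i=1}^b\bigtimes_{j=1}^b\Gamma$. This requires unwinding the adjoint $\overline{Y}_{\mathbf{\overline{M}}_r}^*$ against the block structure of $\overline{\mathcal{H}}=\bigoplus_{i,j}\mathcal{H}$ and reconciling the additive decomposition $N_n(p,q)=\sum_j N_{n-1}(p_j,q_j)$ of the intersection counts in part (Q) with the definition of $T$ as a scaled $n^{-2}N_n$ limit; the shift from $n$ to $n-1$ in passing to sub-copies contributes only a subleading correction that vanishes in the $n\to\infty$, $\kappa^2/n^2$-normalized limit, so $T(p,q)=\sum_{j}T(p_j,q_j)$ on the $i=I$ diagonal. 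Checking this limiting identity carefully, and that the off-diagonal ($i\neq I$) block truly vanishes because paths through distinct first-generation branches meet only at $A,B$ (part (L), the trivial-intersection statement in the critical case), is the one step I would not treat as purely routine.
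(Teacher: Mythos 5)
Your overall architecture is sound but runs in the opposite direction from the paper, and the two halves deserve separate verdicts. For part (i), your direct computation is correct and is a genuine alternative: unwinding $\overline{Y}_{\mathbf{\overline{M}}_r}\overline{Y}_{\mathbf{\overline{M}}_r}^*$ against the block structure of $\overline{\mathcal{H}}$ does give the kernel $\mathbf{1}_{i=I}\sum_{j=1}^b T(p_j,q_j)$, the counting identity $N_n(p,q)=\sum_j N_{n-1}(p_j,q_j)$ (for $i=I$; $N_n\equiv 0$ for $n\geq 1$ when $i\neq I$, since distinct first-generation branches share only the nodes $A$ and $B$) shows this coincides with the intersection-time kernel $T(p,q)$ under the identification $\Gamma\equiv\bigcup_{i=1}^b\bigtimes_{j=1}^b\Gamma$, and then Remark~\ref{RemarkHier}, part (R), Theorem~\ref{ThmKernel}(i), and Corollary~\ref{CorApprox} deliver the randomized-shift property. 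The paper does not do this at all: it proves (ii) first --- by passing through the white-noise representations $\big(\hat{W}_{\overline{\mathbf{M}}_r},\sqrt{a}\hat{Y}_{\overline{\mathbf{M}}_r}\big)$ and $\big(\hat{W}_{\mathbf{M}_{r+1}},\sqrt{a}\hat{Y}_{\mathbf{M}_{r+1}}\big)$ via the isometries of Remark~\ref{RemarkIsom} and then invoking the exact operator-level identities of Proposition~\ref{PropHierSymm} --- and obtains (i) as a corollary of (ii). Your route buys an explicit kernel formula and avoids the white-noise detour; the paper's route buys an identification of the fields themselves, not merely their covariances, which is what makes its uniqueness step legitimate.

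That is precisely where your part (ii) has a gap. You assert that ``a conditional GMC is, by Corollary~\ref{CorShift} and Theorem~\ref{ThmShamov}, determined up to law by the pair (conditional expectation measure, covariance kernel).'' Neither result says this: Theorem~\ref{ThmShamov} gives uniqueness of the GMC over a \emph{fixed} pair $(W,Y)$, and Corollary~\ref{CorShift} merely conditions that statement on the reference measure. Here the two conditional GMCs are built over different Hilbert spaces ($\overline{\mathcal{H}}$ versus $\mathcal{H}$) and different operators ($\sqrt{a}\overline{Y}_{\mathbf{\overline{M}}_r}$ versus $\sqrt{a}Y_{\mathbf{M}_{r+1}}$), so equality of kernels alone does not let you cite these uniqueness statements; you must first intertwine the two representations so that they define the same Gaussian field on $(\Gamma,\Upsilon\overline{\mathbf{M}}_r)$. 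The principle ``same kernel and same reference law implies same GMC law'' is true, but proving it in this setting amounts to constructing the intertwining isometries --- which is exactly what the paper's proof does with Remark~\ref{RemarkIsom} applied to $\hat{U}_{\mathbf{M}_{r+1}}$ and $\hat{U}_{\overline{\mathbf{M}}_r}=\bigoplus_{i,j}\hat{U}_{\mathbf{M}_r^{(i,j)}}$, combined with the identities of Proposition~\ref{PropHierSymm}. So your plan is repairable, but the step you treat as a citation is the real content of the lemma; once you supply the isometries, your proof essentially merges with the paper's. Your closing caution about needing the joint law of (reference measure, GMC), rather than the marginal law of the GMC, is well placed: that coupling is what property (IV) of Theorem~\ref{ThmExist} consumes later, and it is another reason the comparison must be made realization-by-realization of $\Upsilon\overline{\mathbf{M}}_r\equiv\mathbf{M}_{r+1}$ rather than in law only.
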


\begin{remark}\label{RemarkIsom}  Let $W$ be a standard Gaussian random vector in $\mathcal{H}$ and $Y$ be a generalized $\mathcal{H}$-valued function.  In the following proof, we will use that the Gaussian field defined by the pair $(W,Y)$ is the same as the Gaussian field defined by the  pair $(WU,YU)$ provided that $U:\hat{\mathcal{H}}\rightarrow  \mathcal{H}$ is a linear isometry with $\big( \textup{Null}(Y)\big)^{\perp}\subset \textup{Range}(U)$.  This can be understood  on a formal level through $\mathbf{W}(p):=\langle W,Y(p)\rangle_{\mathcal{H}}= \langle U^*W,U^* Y(p)\rangle_{\hat{\mathcal{H}}}  $.   
\end{remark}

\begin{proof}[Proof of Lemma~\ref{CorHier}] Part (i) follows as a corollary of (ii). To see  (ii), first recall that the random measure $(\Gamma,\Upsilon\overline{\mathbf{M}}_{r})$ is equal in law to $(\Gamma,\mathbf{M}_{r+1})$ by Remark~\ref{RemarkHier}.  Thus we need to argue that the fields defined by $\big(\overline{W},\sqrt{a}\overline{Y}_{\mathbf{\overline{M}}_r}\big) $ and $\big(W, \sqrt{a}Y_{\mathbf{M}_{r+1}}\big) $ are equal in law  for a.e.\  realization of $\Upsilon\overline{\mathbf{M}}_{r}\equiv \mathbf{M}_{r+1}$.   By applying Remark~\ref{RemarkIsom} with the isometry $\hat{U}_{\mathbf{M}_{r+1}}$ in Definition~\ref{DefY}, we get that $\big(W, \sqrt{a}Y_{\mathbf{M}_{r+1}}\big) $ defines the same Gaussian field as $\big(\hat{W}_{\mathbf{M}_{r+1}}, \sqrt{a}\hat{Y}_{\mathbf{M}_{r+1}}\big) $.  Similarly, $\big(\overline{W},\sqrt{a}\overline{Y}_{\mathbf{\overline{M}}_r}\big) $ defines the same field as  $\big( \hat{W}_{\overline{\mathbf{M}}_{r}  },\sqrt{a} \hat{Y}_{\overline{\mathbf{M}}_{r}  }\big) $, where $ \hat{W}_{\overline{\mathbf{M}}_{r}  }:=\overline{W}\hat{U}_{ \overline{\mathbf{M}}_{r} }$ and $\hat{Y}_{\overline{\mathbf{M}}_{r}  }:=\overline{Y}_{\mathbf{\overline{M}}_r}\hat{U}_{ \overline{\mathbf{M}}_{r} }$ for the isometry $\hat{U}_{ \overline{\mathbf{M}}_{r} }$ defined as the direct sum of isometric maps $\hat{U}_{ \mathbf{M}_{r}^{(i,j)} }:L^2(D_{i,j},\vartheta_{  \mathbf{M}_{r}^{(i,j)}  } )\rightarrow \mathcal{H}$ (interpreted as copies of $\hat{U}_{ \mathbf{M}_{r}^{(i,j)} }$):
\begin{align*}\hat{U}_{ \overline{\mathbf{M}}_{r} }\,:=\,\bigoplus_{1\leq i,j\leq b} \hat{U}_{ \mathbf{M}_{r}^{(i,j)} }\,.
 \end{align*}
  The operator $\hat{W}_{\overline{\mathbf{M}}_{r}  }$ defines a white noise field  on $\overline{D}:=\bigcup_{1\leq i,j\leq b} D_{i,j}$ with variance measure $\overline{\vartheta}_{\overline{\mathbf{M}}_{r}   }:=\bigoplus_{1\leq i,j\leq b}\vartheta_{\mathbf{M}_r^{(i,j)}   }$ and $\hat{Y}_{\overline{\mathbf{M}}_{r}  }$     acts on $\hat{\phi}\in L^2\big(\overline{D},  \overline{\vartheta}_{\overline{\mathbf{M}}_{r}   }  \big)$ as
$$ \big(\hat{Y}_{\overline{\mathbf{M}}_{r}  }\hat{\phi}\big)(p)\,=\,\sum_{j=1}^b   \left(\hat{Y}_{\mathbf{M}_{r}^{(i,j)}}\hat{\phi}^{(i,j)}\right)(p_{j})  $$
for $p\equiv (i; p_1,\ldots, p_b)$ and $\hat{\phi}=\oplus_{1\leq i,j \leq b}\hat{\phi}^{(i,j)}  $ with $\hat{\phi}^{(i,j)} \in L^2\big( D_{i,j}, \vartheta_{  \mathbf{M}_{r}^{(i,j)} }  \big)$.  Since $\hat{W}_{\mathbf{M}_{r+1}}$ is a white noise field on $D$ with variance measure $\vartheta_{ \mathbf{M}_{r+1} }$, the identities in  Proposition~\ref{PropHierSymm} imply that the  field  $\big( \hat{W}_{\overline{\mathbf{M}}_{r}  },\sqrt{a} \hat{Y}_{\overline{\mathbf{M}}_{r}  }\big)  $ is equal in law to $\big(\hat{W}_{\mathbf{M}_{r+1}}, \sqrt{a}\hat{Y}_{\mathbf{M}_{r+1}}\big) $, which completes the proof.

\end{proof}

\subsection{Proof of part (iii) of  Theorem~\ref{ThmMain}}

\begin{proof}[Proof (iii) of Theorem~\ref{ThmMain}] Let the family of laws for random measures  $( \mathbf{M}_{r})_{r\in \R}$ on $\Gamma$ be defined as in Theorem~\ref{ThmExist}.  For $r\in \R$ and $a\in \R_+$,  define   $\mathds{M}_{r-a,a}$ as the conditional GMC over $\big(W,\sqrt{a}Y_{\mathbf{M}_{r-a}}\big)$ with conditional expectation $ \mathbf{M}_{r-a}$, which exists uniquely by part (ii) of Proposition~\ref{CorRandomShift}.  By the uniqueness of the family of laws $(\mathbf{M}_{r})_{r\in \R}$ satisfying properties (I)-(IV) in Theorem~\ref{ThmExist}, it suffices for us to verify that (I)-(IV) hold for  the family of laws $(\mathds{M}_{r-a,a})_{r\in \R}$.

\vspace{.3cm}

\noindent \textbf{Property (I):} Since $ \mathds{M}_{r-a,a}$ is a conditional GMC with conditional expectation $\mathbf{M}_{r-a}$,
$$\mathbb{E}\big[ \mathds{M}_{r-a,a}\big] \,=\, \mathbb{E}\Big[\mathbb{E}\big[ \mathds{M}_{r-a,a} \,\big|\,   \mathbf{M}_{r-a} \big]\Big] \,=\, \mathbb{E}\big[   \mathbf{M}_{r-a} \big]\,=\,\mu\,,  $$
where the third equality is by  property (I) of Theorem~\ref{ThmExist} for $ \mathbf{M}_{r-a} $.\vspace{.3cm}

\noindent \textbf{Property (II):} For measurable $g:\Gamma\times \Gamma\rightarrow [0,\infty)$, we can insert a conditional expectation with respect to $ \mathbf{M}_{r-a} $ 
\begin{align*}
\mathbb{E}\bigg[\int_{\Gamma\times \Gamma}g(p,q)  \mathds{M}_{r-a,a}(dp)\mathds{M}_{r-a,a}(dq) \bigg]\,=\,&\mathbb{E}\Bigg[\mathbb{E}\bigg[\int_{\Gamma\times \Gamma}g(p,q)  \mathds{M}_{r-a,a}(dp)\mathds{M}_{r-a,a}(dq) \,\bigg|\, \mathbf{M}_{r-a}  \bigg]\Bigg]\,. 
\intertext{Since the Gaussian field $\mathbf{W}_{\mathbf{M}_{r-a}}\equiv (W,Y_{ \mathbf{M}_{r-a} }) $ generating the conditional GMC $\mathds{M}_{r-a,a}$ has kernel $T(p,q)$ when conditioned on  $ \mathbf{M}_{r-a} $, the above is equal to  } 
\,=\,&\mathbb{E}\bigg[\int_{\Gamma\times \Gamma}g(p,q)  e^{aT(p,q)}    \mathbf{M}_{r-a}(dp)\mathbf{M}_{r-a}(dq) \bigg]\,.
\intertext{By property (II) of Theorem~\ref{ThmMain} for $\mathbf{M}_{r-a}$,  }
\,=\,&\int_{\Gamma\times \Gamma}g(p,q)  e^{aT(p,q)}    \upsilon_{r-a}(dp,dq)\,.
\end{align*}
By the remark in (R) of Section~\ref{SecCDRP},  $\exp\{aT(p,q)\}$ is the Radon-Nikodym derivative of    $\upsilon_{r}$ with respect to $  \upsilon_{r-a}$, and thus the above is equal to  $\int_{\Gamma\times \Gamma}g(p,q)      \upsilon_{r}(dp,dq)$.  Since $g$ is an arbitrary nonnegative measurable function, $\mathbb{E}\big[  \mathds{M}_{r-a,a}\times  \mathds{M}_{r-a,a}\big]=\upsilon_{r}  $.

\vspace{.3cm}

\noindent \textbf{Property (III):}   As before, we begin with a conditional expectation with respect to $\mathbf{M}_{r-a}$ to write
\begin{align*}
\mathbb{E}\Big[\big(\mathds{M}_{r-a,a}(\Gamma)\big)^m \Big]\,=\,&\mathbb{E}\bigg[\mathbb{E}\Big[\big(\mathds{M}_{r-a,a}(\Gamma)\big)^m  \,\big|\, \mathbf{M}_{r-a}  \Big]\bigg]\\ 
\,=\,&\mathbb{E}\Bigg[ \int_{\Gamma^m} \textup{exp}\Bigg\{ a\sum_{1\leq i < j\leq m} T(p_i,p_j)     \Bigg\}\mathbf{M}_{r-a}(dp_1)\cdots \mathbf{M}_{r-a}(dp_m) \Bigg]\,,
\intertext{where we have applied Proposition~\ref{PropGMCmoments} to get an integral expression for the moments of $\mathds{M}_{r-a,a}(\Gamma)$ conditioned on $\mathbf{M}_{r-a} $. With the inequality $x_1\cdots x_n\leq \frac{1}{n}(x_1^n+\cdots +x_n^n)$  for nonnegative $x_j$, the above can be bounded by  }
\,\leq \,&\frac{2}{m(m-1)}\mathbb{E}\Bigg[ \int_{\Gamma^m} \sum_{1\leq i < j\leq m}\textup{exp}\bigg\{ a\frac{m(m-1)}{2}  T(p_i,p_j)     \bigg\}\mathbf{M}_{r-a}(dp_1)\cdots \mathbf{M}_{r-a}(dp_m) \Bigg]\,. \intertext{Expanding the sum and integrating out the $m-2$ variables absent from the integrand we get}
\,=\,&\mathbb{E}\bigg[ \int_{\Gamma\times \Gamma}\big(\mathbf{M}_{r-a}(\Gamma)  \big)^{m-2} \textup{exp}\bigg\{ a\frac{m(m-1)}{2}  T(p,q)     \bigg\}\mathbf{M}_{r-a}(dp)\mathbf{M}_{r-a}(dq) \bigg].
\intertext{Next by applying the inequality $xy\leq \frac{1}{2}x^{2}+\frac{1}{2}y^2$ for $x,y\geq 0$,  }
\,\leq \,&\mathbb{E}\bigg[\int_{\Gamma\times \Gamma} \left(\frac{1}{2} \big( \mathbf{M}_{r-a}(\Gamma)  \big)^{2m-4}\,+\,\frac{1}{2} e^{ a m(m-1)  T(p,q) }\right)\mathbf{M}_{r-a}(dp)\mathbf{M}_{r-a}(dq) \bigg]
 \\
\,= \,&\frac{1}{2}\mathbb{E}\Big[ \big(\mathbf{M}_{r-a}(\Gamma)  \big)^{2m-2}\Big]\,+\,\frac{1}{2}\mathbb{E}\bigg[\int_{\Gamma\times\Gamma} e^{ a m(m-1)  T(p,q) }\mathbf{M}_{r-a}(dp)\mathbf{M}_{r-a}(dq) \bigg]\,.
\end{align*}
The left term above is finite since the moments of $\mathbf{M}_{r-a}(\Gamma)$ are finite  by property (III) of Theorem~\ref{ThmExist}. By property (II) of Theorem~\ref{ThmExist} for $\mathbf{M}_{r-a}$, the right term above is equal to  
$$\int_{\Gamma\times\Gamma} e^{ a m(m-1)  T(p,q) }\upsilon_{r-a}(dp, dq)\,=\,\int_{\Gamma\times\Gamma} \upsilon_{r-a+a m(m-1)}(dp, dq) =\,1+R\big(r-a+am(m-1)\big)\,, $$   
where the first equality uses that $\upsilon_{t+u}$ has Radon-Nikodym derivative $\textup{exp}\{uT(p,q) \}$  with respect to $\upsilon_{t}$, and the second equality holds because $\upsilon_{t}$ has total mass $1+R(t)$.  Therefore the $m^{th}$ moment of $\mathds{M}_{r-a,a}(\Gamma)$ is finite.

\vspace{.3cm}

\noindent \textbf{Property (IV):} Let $\overline{\mathds{M}}_{r-a,a}:=\big\{\mathds{M}_{r-a,a}^{(i,j)}\big\}_{i,j\in\{1,\ldots, b\}}$
 be a  family of independent copies of the random measure $ \mathds{M}_{r-a,a}$.  We must show the equality in law  \begin{align}\label{Show}
\mathds{M}_{r+1-a,a}\,\stackrel{\mathsmaller{\mathcal{L}}}{=}\,\Upsilon\overline{\mathds{M}}_{r-a,a}\,, 
\end{align}  where $\Upsilon$ is defined as in Definition~\ref{DefMeasure}.  To construct the family $\overline{\mathds{M}}_{r-a,a}$,
\begin{itemize}
\item  let
 $\overline{\mathbf{M}}_{r-a}:= \big\{\mathbf{M}_{r-a}^{(i,j)}\big\}_{i,j\in\{1,\ldots, b\}}$ be a family of i.i.d.\ copies of  $(\Gamma,\mathbf{M}_{r-a})$,

 \item define $ Y_{\mathbf{M}_{r-a}^{(i,j)}} $ for each $(i,j)\in \{1,\ldots ,b\}^2$  as in Definition~\ref{DefY}, and 

\item let $W_{i,j}$ be independent standard Gaussian random vectors that are jointly independent of  $\overline{\mathbf{M}}_{r-a}$.

  \end{itemize}
  We define $ \mathds{M}_{r-a,a}^{(i,j)}\equiv \mathds{M}_{r-a,a}^{(i,j)}\big(\mathbf{M}_{r-a}^{(i,j)}, W_{i,j}  \big)$ as the conditional GMC over $\big(W_{i,j}, \sqrt{a} Y_{\mathbf{M}_{r-a}^{(i,j)}} \big)$ with conditional expectation $\mathbf{M}_{r-a}^{(i,j)}$.

Let $\overline{W} $ be defined in terms of the family $\{W_{i,j}\}_{i,j\in \{1,\ldots,b\}}$  as in Definition~\ref{DefCompositeStuffI} and  $ \overline{Y}_{\overline{\mathbf{M}}_{r-a}}$   be defined as in Definitions~\ref{DefCompositeStuffII}. Recall from Lemma~\ref{CorHier} that the conditional GMC $\mathds{M}_{r-a,a}^{\Upsilon}$ over  $\big(\overline{W},\sqrt{a} \overline{Y}_{\overline{\mathbf{M}}_{r-a}}\big) $ with conditional expectation $\overline{\mathbf{M}}_{r-a}$  is equal in law to the conditional GMC $\mathds{M}_{r+1-a,a}$ over  $\big(W,\sqrt{a} Y_{ \mathbf{M}_{r+1-a}}\big) $ with conditional expectation $\mathbf{M}_{r+1-a}$. Therefore, to prove~(\ref{Show}), it suffices to show that
\begin{align}\label{Deduce}
\mathds{M}_{r-a,a}^{\Upsilon}\,=\,\Upsilon\overline{\mathds{M}}_{r-a,a}
\end{align}
holds a.s.  By uniquess of the conditional GMC $\mathds{M}_{r-a,a}^{\Upsilon}$, we can deduce~(\ref{Deduce})  by verifying that   $\Upsilon\overline{\mathds{M}}_{r-a,a}$ fulfills conditions (I)-(III) in Definition~\ref{DefCGMC} for being a conditional GMC  over  $\big(\overline{W},\sqrt{a} \overline{Y}_{\overline{\mathbf{M}}_{r-a}}\big) $ with conditional expectation $ \Upsilon\overline{\mathbf{M}}_{r-a}$. We check these conditions below.\vspace{.25cm}

\noindent Condition (I): Notice that taking the conditional expectation of $\Upsilon\overline{\mathds{M}}_{r-a,a}$ with respect to $\overline{\mathbf{M}}_{r-a}$ yields
\begin{align*}
\mathbb{E}\big[ \Upsilon\overline{\mathds{M}}_{r-a,a}\,|\, \overline{\mathbf{M}}_{r-a} \big]\,=\,\mathbb{E}\Bigg[ \frac{1}{b}\sum_{i=1}^b \prod_{j=1}^b  \mathds{M}_{r-a,a}^{(i,j)} \,\bigg|\,  \overline{\mathbf{M}}_{r-a} \Bigg]\,=\,&\frac{1}{b}\sum_{i=1}^b \prod_{j=1}^b  \mathbb{E}\Big[   \mathds{M}_{r-a,a}^{(i,j)}\,\big|\, \mathbf{M}_{r-a}^{(i,j)} \Big]\\ \,=\,&\frac{1}{b} \sum_{i=1}^b \prod_{j=1}^b   \mathbf{M}_{r-a}^{(i,j)} \,=\,\Upsilon\overline{\mathbf{M}}_{r-a}\,,
\end{align*}
where the first and last equalities are understood through the  canonical identification $\Gamma\equiv \bigcup_{i=1}^b\bigtimes_{j=1}^b \Gamma$.  The second equality above uses that the pairs $\big(\mathbf{M}_{r-a}^{(i,j)}  ,  W_{i,j}\big)$ are independent. Since $\Upsilon\overline{\mathbf{M}}_{r-a}$ is a function of $\overline{\mathbf{M}}_{r-a} $, taking the conditional expectation of $\Upsilon\overline{\mathds{M}}_{r-a,a}$ with respect to $\Upsilon\overline{\mathbf{M}}_{r-a}$ would also yield $\Upsilon\overline{\mathbf{M}}_{r-a}$.  \vspace{.25cm}

\noindent  Condition (II):  By definition, the random measure  $\Upsilon\overline{\mathds{M}}_{r-a,a}$ is a function, $\Upsilon$, of the family $\overline{\mathds{M}}_{r-a,a}\equiv \big\{\mathds{M}_{r-a,a}^{(i,j)}\big\}_{i,j\in \{1,\ldots, b\}}$, whose components $\mathds{M}_{r-a,a}^{(i,j)} $ are functions of $\big(\mathbf{M}_{r-a}^{(i,j)}  , W_{i,j}    \big) $ by (II) of Definition~\ref{DefCGMC}.
Thus $\Upsilon\overline{\mathds{M}}_{r-a,a}$ is a function of $\big(\overline{\mathbf{M}}_{r-a}  , \overline{W}    \big) $.  To see that $\Upsilon\overline{\mathds{M}}_{r-a,a}$ is also a function of $\big(\Upsilon\overline{\mathbf{M}}_{r-a}  , \overline{W}    \big) $, notice that the information lost about a family of measures $\{M_{i,j}\}_{i,j\in \{1,\ldots,b\}   } $ through the operation $\Upsilon$ can be characterized as follows: for any  positive scalars $\{\lambda_{i,j}\}_{i,j\in \{1,\ldots,b\}   }$ with   $\prod_{j=1}^b \lambda_{i,j}=1$ for each $i$, we get
\begin{align}\label{Rel}
\Upsilon\{\lambda_{i,j}M_{i,j}\}_{i,j\in \{1,\ldots,b\}   }\,=\,\Upsilon\{M_{i,j}\}_{i,j\in \{1,\ldots,b\}   }\,.
\end{align}
In other terms, the family of measures $\{M_{i,j}\}_{i,j\in \{1,\ldots,b\}   }$ can be recovered from $\Upsilon\{M_{i,j}\}_{i,j\in \{1,\ldots,b\}   }$ up to such a family scalar multiples.  However, since  $\mathds{M}_{r-a,a}^{(i,j)}\big(\lambda_{i,j}\mathbf{M}_{r-a}^{(i,j)}  , W_{i,j}    \big)=\lambda_{i,j}\mathds{M}_{r-a,a}^{(i,j)}\big(\mathbf{M}_{r-a}^{(i,j)}  , W_{i,j}    \big) $ and $\Upsilon\overline{\mathds{M}}_{r-a,a}$ is also defined through $\Upsilon$,  the random measure $\Upsilon\overline{\mathds{M}}_{r-a,a}$ is a function of $\big(\Upsilon\overline{\mathbf{M}}_{r-a}  , \overline{W}    \big) $. \vspace{.25cm}

\noindent Condition (III): Notice that   a shift of the field $\overline{W}$ by $\phi \in \overline{\mathcal{H}}$ yields
\begin{align*}
\Upsilon\overline{\mathds{M}}_{r-a,a}\big(&\Upsilon\overline{\mathbf{M}}_{r-a}, \overline{W}+\phi, dp\big)\\  \,=&\,\frac{1}{b} \prod_{j=1}^b \mathds{M}_{r-a,a}^{(i,j)}\Big(\mathbf{M}_{r-a}^{(i,j)}, W_{i,j}+\phi^{(i,j)},dp_j\Big)\,,
\intertext{where  $\displaystyle \phi=\oplus_{1\leq i,j\leq b} \phi^{(i,j)}$ is the decomposition of $\phi$ in terms of $\phi^{(i,j)}\in  \mathcal{H}$, and we identify $p\equiv (i; p_1,\ldots,p_b)$ through the canonical correspondence $\Gamma\equiv \{1,\ldots, b\}\times \bigtimes_{j=1}^b \Gamma$.  Since the random measures $\mathds{M}_{r-a,a}^{(i,j)}$ are conditional GMCs over  $\big(W_{i,j}, \sqrt{a}Y_{\mathbf{M}_{r-a}^{(i,j)}}  \big)$, we can apply property (III) of Definition~\ref{DefCGMC} to write the above as    }
\,=&\,\frac{1}{b}\textup{exp}\Bigg\{\sqrt{a}\sum_{j=1}^b\Big(Y_{\mathbf{M}_{r-a}^{(i,j)}}\phi^{(i,j)}\Big) (p_j)   \Bigg\} \prod_{j=1}^b \mathds{M}_{r-a,a}^{(i,j)}\Big(\mathbf{M}_{r-a}^{(i,j)},W_{i,j},dp_j\Big)\,. \\
\intertext{Since $\big(\overline{Y}_{\overline{\mathbf{M}}_{r-a}}\phi \big)(p):=\sum_{j=1}^b \big(Y_{\mathbf{M}_{r-a}^{(i,j)}}\phi^{(i,j)}\big) (p_j)  $, the definition of $\Upsilon\overline{\mathds{M}}_{r-a,a}$ implies that the above can be written as  }
 \,=&\,\textup{exp}\left\{\sqrt{a}\big(\overline{Y}_{\Upsilon\overline{\mathbf{M}}_{r-a} }\phi)(p)  \right\}\Upsilon\overline{\mathds{M}}_{r-a,a}\big(\Upsilon\overline{\mathbf{M}}_{r-a},\overline{W},dp\big)\,,
\end{align*}
which verifies condition (III). \vspace{.2cm} 

We have  established that $\Upsilon\overline{\mathds{M}}_{r-a,a}$ is the conditional GMC over   $(\overline{W}, \sqrt{a}  \overline{Y}_{\overline{\mathbf{M}}_{r-a}  }   )$ with conditional expectation $\mathbf{M}_{r-a}^{\Upsilon}$.  By the observations  below~(\ref{Deduce}), the proof is complete.

\end{proof}


\section{An application of the GMC structure to strong disorder analysis}

The proof of the following proposition is a modification of the proof of~\cite[Theorem 1.15]{Clark2}, which  adapted an argument in~\cite{Lacoin} for discrete polymers.

\begin{proposition}\label{PropDisorder}
Let the family of  random measures $(\mathbf{M}_{r})_{r\in \R}$ be defined as in Theorem~\ref{ThmExist}. As $r\rightarrow \infty$ the total mass  $\mathbf{M}_{r}(\Gamma   )$ converges in probability to $0$.


\end{proposition}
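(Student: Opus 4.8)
The plan is to prove convergence in probability by the method of fractional moments. Writing $Z_r:=\mathbf{M}_r(\Gamma)$, recall from property (I) of Theorem~\ref{ThmExist} that $\mathbb{E}[Z_r]=1$ for every $r$, so neither a first- nor a second-moment estimate can work (indeed $\mathrm{Var}(Z_r)=R(r)\to\infty$ by part (O)). Instead I would fix an exponent $\theta\in(0,1)$ and aim to show that $m_r:=\mathbb{E}[Z_r^{\theta}]$ tends to $0$ as $r\to\infty$. Since Markov's inequality gives $\mathbb{P}(Z_r>\epsilon)=\mathbb{P}(Z_r^{\theta}>\epsilon^{\theta})\le \epsilon^{-\theta}m_r$, the convergence $m_r\to 0$ immediately yields $Z_r\to 0$ in probability.

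The next step is the recursion. By property (IV) of Theorem~\ref{ThmExist} we have $Z_{r+1}\stackrel{d}{=}\frac1b\sum_{i=1}^b\prod_{j=1}^b Z_r^{(i,j)}$ with the $Z_r^{(i,j)}$ i.i.d.\ copies of $Z_r$. Using the subadditivity of $t\mapsto t^{\theta}$ for $\theta\le 1$ together with independence,
\[
m_{r+1}\,\le\, b^{-\theta}\sum_{i=1}^{b}\mathbb{E}\Big[\prod_{j=1}^{b}\big(Z_r^{(i,j)}\big)^{\theta}\Big]\,=\, b^{\,1-\theta}\,m_r^{\,b}\,.
\]
The map $x\mapsto b^{1-\theta}x^{b}$ on $(0,1)$ has the repelling fixed point $m^{\star}=b^{-(1-\theta)/(b-1)}$, and a direct check shows that if $m_{r_0}<m^{\star}$ then $m_{r_0+1}<m_{r_0}$ and $m_r\to 0$. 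Thus the whole argument reduces to a \emph{seed estimate}: producing one pair $(\theta,r_0)$ with $m_{r_0}<m^{\star}$.

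To obtain the seed I would linearize at $\theta\uparrow 1$. Since $\mathbb{E}[Z_r^{\theta}]=1-(1-\theta)\,\mathfrak{p}(r)+o(1-\theta)$ with $\mathfrak{p}(r):=\mathbb{E}[Z_r\log Z_r]$ (differentiation under the expectation being justified by the finiteness of all moments of $Z_r$ from property (III)), and $m^{\star}=1-(1-\theta)\frac{\log b}{b-1}+o(1-\theta)$, the condition $m_{r_0}<m^{\star}$ holds for $\theta$ near $1$ as soon as the free energy satisfies $\mathfrak{p}(r_0)>\frac{\log b}{b-1}$. To force this, I would invoke the GMC structure of Theorem~\ref{ThmMain}: fix a base parameter $\varrho$ and write $Z_{\varrho+a}\stackrel{d}{=}\mathds{M}_{\varrho,a}(\Gamma)$, the total mass of the conditional GMC over $\mathbf{M}_{\varrho}$ with coupling $\sqrt{a}$. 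Conditionally on $\mathbf{M}_{\varrho}$, the integrand $e^{\sqrt{a}\,\mathbf{W}_{\mathbf{M}_{\varrho}}(p)-\frac{a}{2}\mathbb{E}[\mathbf{W}^2_{\mathbf{M}_{\varrho}}(p)\mid \mathbf{M}_{\varrho}]}$ degenerates as $a\to\infty$ while the kernel $T(p,q)$ remains $\mathbf{M}_{\varrho}$-nontrivial by Lemma~\ref{LemmaInt}; a size-biasing computation based on the Cameron--Martin identity~(\ref{Identity}) and Jensen's inequality then produces a lower bound on the conditional free energy that grows with $a$, giving $\mathfrak{p}(\varrho+a)\to\infty$ and hence a valid seed $r_0=\varrho+a$.

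The main obstacle is exactly this seed, i.e.\ showing that the free energy crosses the critical value $\frac{\log b}{b-1}$. The two hierarchical recursions at hand — the fractional-moment bound above and the size-bias lower bound $\mathfrak{p}(r+1)\ge b\,\mathfrak{p}(r)-\log b$ (whose fixed point is again $\frac{\log b}{b-1}$) — are each one-sided at the threshold and are consistent with $\mathfrak{p}$ saturating exactly at $\frac{\log b}{b-1}$, so the crossing cannot come from the recursion alone and must be extracted from the genuine divergence of the disorder, namely from $R(r)\to\infty$ coupled with the degeneracy of the GMC mass as the coupling strength grows. Making that conditional free-energy lower bound explicit and uniform enough is the delicate point, and is precisely where the argument parallels the hardest part of~\cite[Theorem 1.15]{Clark2}. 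Finally, the recursion delivers $m_r\to 0$ only along integer increments of $r$; passing to the continuum limit $r\to\infty$ is routine, using the conditional GMC of Theorem~\ref{ThmMain} for fractional shifts (or the monotonicity of $r\mapsto m_r$) to interpolate between integers.
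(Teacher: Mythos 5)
Your reduction to fractional moments is sound as far as it goes: the Markov bound, the recursion $m_{r+1}\le b^{1-\theta}m_r^{\,b}$ obtained from property (IV) of Theorem~\ref{ThmExist} and subadditivity of $t\mapsto t^{\theta}$, the threshold analysis at the repelling fixed point $m^{\star}=b^{-(1-\theta)/(b-1)}$, and the monotonicity of $r\mapsto m_r$ (conditional Jensen via the GMC structure) are all correct. But you have diagnosed and then not filled the only hole that matters: the seed estimate. The sketch offered for producing $\mathfrak{p}(r_0)=\mathbb{E}[Z_{r_0}\log Z_{r_0}]>\frac{\log b}{b-1}$ --- ``a size-biasing computation based on the Cameron--Martin identity and Jensen's inequality produces a lower bound on the conditional free energy that grows with $a$'' --- is not an argument. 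Note in particular that Jensen's inequality applied to the concave function $\log$ bounds the size-biased expectation $\mathbb{E}^{\mathrm{sb}}[\log Z]$ from \emph{above} by $\log\mathbb{E}^{\mathrm{sb}}[Z]$; to get a lower bound one must, e.g., restrict the tilted mass to paths with $T(p,q)\ge c$ near the spine and then control $\mathbb{E}\big[\log \mathds{M}(\{q: T(p,q)\ge c\})\big]$ from below, i.e.\ rule out the GMC putting exponentially small mass near the spine. That control is precisely the hard content of the proposition, and your proposal leaves it open; as your own discussion of the two one-sided recursions shows, everything else is routine.

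The paper closes this gap by a recursion-free route that exploits the growing coupling constant directly, which is exactly what your sketch gropes for. By part (iii) of Theorem~\ref{ThmMain}, for $r>0$ the measure $\mathbf{M}_r$ is the conditional GMC over $(W,\sqrt{r}\,Y_{\mathbf{M}_0})$ with conditional expectation $\mathbf{M}_0$. Take the \emph{fixed} Cameron--Martin direction $\phi=-\hat{U}_{\mathbf{M}_0}1_D$, so that by Theorem~\ref{ThmKernel}(iii) one has $(Y_{\mathbf{M}_0}\phi)(p)=-\mathbf{t}_{\mathbf{M}_0}(p):=-\int_{\Gamma}T(p,q)\,\mathbf{M}_0(dq)$, which is $\mathbf{M}_0$-a.e.\ strictly positive by Lemma~\ref{LemmaInt}. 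Tilting $\mathbb{P}$ by $e^{\langle W,\phi\rangle-\frac12\|\phi\|^2}$ and applying Cauchy--Schwarz to the $1/2$-moment yields
\begin{align*}
\mathbb{E}\Big[\big(\mathbf{M}_r(\Gamma)\big)^{1/2}\,\Big|\,\mathbf{M}_0\Big]\,\le\,e^{\frac12\|\phi\|^2}\bigg(\int_{\Gamma} e^{-\sqrt{r}\,\mathbf{t}_{\mathbf{M}_0}(p)}\,\mathbf{M}_0(dp)\bigg)^{1/2}\,,
\end{align*}
using $\mathbf{M}_r(\mathbf{M}_0,W+\phi,dp)=e^{\sqrt{r}(Y_{\mathbf{M}_0}\phi)(p)}\mathbf{M}_r(\mathbf{M}_0,W,dp)$ and $\mathbb{E}[\mathbf{M}_r\,|\,\mathbf{M}_0]=\mathbf{M}_0$. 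The point is that the cost of the tilt, $e^{\frac12\|\phi\|^2}$, does not depend on $r$, while the reward $e^{-\sqrt{r}\,\mathbf{t}_{\mathbf{M}_0}(p)}$ improves with $r$ because the coupling $\sqrt{r}$ multiplies the fixed shift; dominated convergence then sends the right side to $0$ a.s.\ as $r\to\infty$, with no recursion, no seed, and no interpolation between integer values of $r$. If you want to salvage your architecture, this same fixed tilt is what would make your seed estimate work --- but once it is available, the recursion is superfluous.
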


\begin{proof} Since we are characterizing the limiting behavior of the random measures $\mathbf{M}_r$ as $r\rightarrow \infty$, it suffices to assume that $r>0$.  By part (iii) of Theorem~\ref{ThmMain}, we can construct $\mathbf{M}_r$ as the conditional GMC over $(W, \sqrt{r}Y_{\mathbf{M}_0})$ with conditional expectation $\mathbf{M}_0$, where $Y_{\mathbf{M}_0}$ is defined as in Definition~\ref{DefY}, and  $W$ is a standard Gaussian random vector in $\mathcal{H}$ that is independent of $\mathbf{M}_0$. The random measure $\mathbf{M}_r\equiv \mathbf{M}_r(\mathbf{M}_0, W)$ is a function of $(\mathbf{M}_0, W) $ satisfying
$$ \mathbb{E}\big[ \mathbf{M}_r\,| \,\mathbf{M}_0  \big]\,=\,\mathbf{M}_0 \hspace{1cm}                                                \text{and} \hspace{1cm} \mathbf{M}_r\big(\mathbf{M}_0, W+\phi, dp\big)\,=\,e^{\sqrt{r}(Y_{\mathbf{M}_0}\phi)(p)   }\mathbf{M}_r\big(\mathbf{M}_0, W,dp   \big) $$
for any $\phi\in \mathcal{H}$. To prove that $\mathbf{M}_{r}(\Gamma) $ converges in probability to zero, it suffices to show that the fractional moment  $F_{ \mathbf{M}_0}(r):= \mathbb{E}\big[ \big(\mathbf{M}_{r}(\mathbf{M}_0, W,\Gamma)\big)^{1/2}\,\big|\, \mathbf{M}_0 \big]$ converges to zero as $r\rightarrow \infty$ for a.e.\ realization of $\mathbf{M}_0$

Let $\phi\in \mathcal{H}$ be defined as $\phi:=- \hat{U}_{\mathbf{M}_0} 1_D$ where $\hat{U}_{\mathbf{M}_0}:L^2(D,\vartheta_{\mathbf{M}_0}) \rightarrow \mathcal{H}$ is the linear isometry in  Definition~\ref{DefY}.  Notice that 
\begin{align*}
\big(Y_{\mathbf{M}_0}  \phi\big)(p)\,=\,- \big(\widehat{Y}_{\mathbf{M}_0}  1_D\big)(p)\,=\,-\int_{\Gamma}T(p,q)\mathbf{M}_0(dq)\,=:\,-\mathbf{t}_{\mathbf{M}_0}(p)\,,  
\end{align*}
where the first equality is from the definition of $Y_{\mathbf{M}_0}$ in Definition~\ref{DefY}, and the second equality is from (iii) of Theorem~\ref{ThmKernel}.  For a.e.\ realization of $\mathbf{M}_0$, the function $\mathbf{t}_{\mathbf{M}_0}:\Gamma\rightarrow [0,\infty)$ is positive for $\mathbf{M}_0$-a.e.\ $p\in \Gamma$  as a consequence of Lemma~\ref{LemmaInt}.  In particular, this implies that we  have the a.s.\ convergence
\begin{align*}
\int_{\Gamma}e^{-\sqrt{r} \mathbf{t}_{\mathbf{M}_0}(p)  } \mathbf{M}_{0}(dp)\hspace{.5cm}\stackrel{ r\rightarrow \infty }{\longrightarrow    }  \hspace{.5cm}0\,.
\end{align*}

Define $\widehat{\mathbb{P}}$ as the  measure having derivative $
\frac{d\widehat{\mathbb{P}}}{d\mathbb{P}}\,=\,e^{\langle W, \phi\rangle  -\frac{1}{2}\|\phi\|^2     }$ with respect to $\mathbb{P}$, and
let $\widehat{\mathbb{E}}$ denote the expectation corresponding to $\widehat{\mathbb{P}}$. The Cauchy-Schwarz inequality yields that 
\begin{align}
 \mathbb{E}\bigg[\Big(\mathbf{M}_{r}\big(\mathbf{M}_0, W, \Gamma\big) \Big)^{\frac{1}{2}} \,\bigg|\, \mathbf{M}_0  \bigg]\,=\,&\widehat{\mathbb{E}}\bigg[\Big(\mathbf{M}_{r}\big(\mathbf{M}_0, W, \Gamma\big)\Big)^{\frac{1}{2}}  e^{-\langle W, \phi\rangle  +\frac{1}{2} \|\phi\|^2     }  \,\bigg|\, \mathbf{M}_0 \bigg]\nonumber \\  \,\leq \,&\widehat{\mathbb{E}}\big[\mathbf{M}_{r}\big(\mathbf{M}_0, W,\Gamma\big)    \,\big|\, \mathbf{M}_0\big]^{\frac{1}{2}}\widehat{\mathbb{E}}\bigg[\Big( e^{-\langle W, \phi\rangle +\frac{1}{2}   \|\phi\|^2 } \Big)^2  \bigg]^{\frac{1}{2}}\,.
\nonumber  
\intertext{Since $\widehat{\mathbb{E}}\big[ G(\mathbf{M}_0,W)   \,|\, \mathbf{M}_0\big]\,=\,\mathbb{E}\big[ G(\mathbf{M}_0, W+\phi)   \,|\, \mathbf{M}_0\big]$ for any  nonnegative measurable function $G$ of $(\mathbf{M}_0,W)$ and $\mathbf{M}_{r}\big(\mathbf{M}_0, W+\phi,dp\big)= e^{( Y_{\mathbf{M}_0} \phi)(p)}  \mathbf{M}_{r}\big(\mathbf{M}_0, W,dp\big)$ for $( Y_{\mathbf{M}_0} \phi)(p)=- \mathbf{t}_{\mathbf{M}_0}(p)  $, the above is equal to    }
  \, = \,&\mathbb{E}\bigg[ \int_{\Gamma}e^{-\sqrt{r} \mathbf{t}_{\mathbf{M}_0}(p)  } \mathbf{M}_{r}(dp)\,\bigg| \,   \mathbf{M}_0\bigg]^{\frac{1}{2}} \mathbb{E}\Big[ e^{-\langle W, \phi\rangle +\frac{1}{2}  \|\phi\|^2    }  \Big]^{\frac{1}{2}}\nonumber   \\  
    \, = \,&\bigg(\int_{\Gamma}e^{-\sqrt{r} \mathbf{t}_{\mathbf{M}_0}(p)  } \mathbf{M}_{0}(dp)   \bigg)^{\frac{1}{2}}  e^{\frac{1}{2} \|\phi\|^2    }  \,.\nonumber
\end{align}
This expression a.s.\ converges  to zero as $r\rightarrow \infty$ by the remark above.

\end{proof}

\begin{appendix}

\section{Proofs of Corollary~\ref{CorApprox} and Proposition~\ref{PropGMCmoments}}\label{SecZ}

Recall that $(\Gamma, \mathcal{B}_{\Gamma})$ is a  standard Borel measurable space and $\mu$ is a finite measure on $\Gamma$.  
 No generality is sacrificed by  taking $\mu$ to be a probability measure and  $\Gamma$ to be the set $\{0,1\}^{\mathbb{N}}$ equipped with the Borel $\sigma$-algebra   determined by the metric $d\big(\{x_n\}_{n\in \mathbb{N}},  \{y_n\}_{n\in \mathbb{N}} \big)=\sum_{n\in \mathbb{N}}2^{-n}|x_n-y_n|  $. 
  For a $\mu$-integrable function $\psi:\Gamma\rightarrow \R$, let $ \mathrm{E}_{\mu}\left[\psi\,|\,\mathcal{F}_n\right]$ denote the conditional expectation of $\psi$ with respect to  $\mathcal{F}_n$, and let  $P_n:L^2(\Gamma,\mu)\rightarrow L^2(\Gamma,\mu)$ denote the corresponding orthogonal projection.   The projections $P_n$ converge strongly as $n\rightarrow \infty$  to the identity operator on $L^2(\Gamma,\mu)$ since the algebra $\mathcal{A}:=\bigcup_{n=1}^{\infty}\mathcal{F}_n$ generates $\mathcal{B}_{\Gamma}$.

\begin{proof}[Proof of  Corollary~\ref{CorApprox}] Let $Y:\mathcal{H}\rightarrow L^2(\Gamma,\mu)$ be a bounded linear operator for which $\mathbf{T}=YY^*$ is  Hilbert-Schmidt  with kernel $T(p,q)$ satisfying $\int_{\Gamma\times \Gamma}\textup{exp}\big\{ T(p,q) \big\}\mu(dp)\mu(dq)<\infty$. 
To show that $Y$ is a randomized shift, it suffices to verify conditions (I) and (II) of Theorem~\ref{ThmShamovConv} for the sequence $Y_{n}:=P_n Y$.   Note that $Y_{n}$ is a finite-dimensional operator and thus a trivial randomized shift, i.e.,  $Y_{n}(p)\in\mathcal{H}$ for $\mu$-a.e.\ $p\in \Gamma$. Let   $\mathbf{M}_{Y_n}$ be the GMC  associated to $Y_{n} $ with  expectation $\mu$.    The second moment of the total mass of $\mathbf{M}_{Y_n}$ has the bound $$\mathbb{E}\Big[\big( \mathbf{M}_{Y_n}(\Gamma)  \big)^2\,\Big| \,W\Big]\,=\,\int_{\Gamma\times \Gamma}e^{ T_{n}(p,q)  }\mu(dp)\mu(dq) \,\leq \,\int_{\Gamma\times \Gamma}e^{ T(p,q)  }\mu(dp)\mu(dq)\,<\,\infty \,, $$
     where the first inequality holds by Jensen's inequality since $T_n(p,q)=\mathrm{E}_{\mu}\big[T(p,q)\,|\,\mathcal{F}_{n}\otimes  \mathcal{F}_{n}  \big]   $.  It follows that the family of random variables $\big\{\big( \mathbf{M}_{Y_n}(\Gamma)\big\}_{n\in \mathbb{N}}$ is uniformly integrable, which is condition (I).  To verify condition (II) note that the projections $P_n$ converge strongly to the identity operator on $L^2(\Gamma,\mu)$,  the operators $Y_{n}:=P_{n} Y$ converge strongly to $Y $ (and, in fact, in operator norm because $Y $ must be compact for $YY^*$ to be Hilbert-Schmidt).

\end{proof}

\begin{proof}[Proof of Proposition~\ref{PropGMCmoments}] 
Let $Y:\mathcal{H}\rightarrow L^2(\Gamma,\mu)$ be a randomized shift such that $\mathbf{T}=YY^*$ is Hilbert-Schmidt with   kernel $T(p,q)$ having finite exponential moments. Define $Y_n$, $T_n(p,q)$, and  $\mathbf{M}_{Y_n}$ as in the proof of  Corollary~\ref{CorApprox}.   The sequence $T_n(p,q)=\mathrm{E}_{\mu}\big[T(p,q)\,|\,\mathcal{F}_{n}\otimes  \mathcal{F}_{n}  \big]   $ forms a martingale with respect to the filtration 
$\big(\mathcal{F}_{n}\otimes  \mathcal{F}_{n}\big)_{n\in \mathbb{N}}$ that converges $\mu\times \mu$-a.e.\ to $T(p,q)$, and hence condition (III) of Theorem~\ref{ThmShamovConv} holds.  
Since conditions (I) and (II)  of Theorem~\ref{ThmShamovConv}  hold by the proof  of Corollary~\ref{CorApprox}, Theorem~\ref{ThmShamovConv} implies that $\mathbf{M}_{Y_n}(A)$ converges in probability to $\mathbf{M}_{Y}(A)$ for any measurable $A\subset \Gamma$.

 Since the exponential moments of $T(p,q)$ with respect to $\mu\times \mu$ are finite,  it follows that 
\begin{align}\label{IntFinite}
 \int_{A^m}\textup{exp}\Bigg\{\sum_{1\leq i<j\leq m }  T(p_i , p_j)  \Bigg\} \mu(dp_1)\cdots \mu(dp_m)
\end{align}
is finite for any  $m\in \mathbb{N}$.  The $m^{th}$ moment of $\mathbf{M}_{Y_n}(A)$ is the random variable
\begin{align}
\mathbb{E}\left[ \big( \mathbf{M}_{Y_n}(A)  \big)^m  \right]\,=\,&\mathbb{E}\Bigg[ \bigg( \int_{A}\textup{exp}\bigg\{\big\langle W, Y_n(p) \big\rangle -\frac{1}{2}\mathbb{E}\Big[\big\langle W, Y_n(p) \big\rangle^2\Big]   \bigg\}   \mu(dp) \bigg)^m  \Bigg]\nonumber \\
\,=\,&  \int_{A^m}\mathbb{E}\Bigg[\prod_{j=1}^m\textup{exp}\bigg\{\big\langle W, Y_n(p_j) \big\rangle -\frac{1}{2}\mathbb{E}\left[\big\langle W, Y_n(p_j) \big\rangle^2\right]   \bigg\} \Bigg]  \mu(dp_1)\cdots \mu(dp_m) \,.\nonumber 
\intertext{The random variables in the product are log-normal, so the above is equal to}
\,=\,&  \int_{A^m}\textup{exp}\Bigg\{\sum_{1\leq i<j\leq m } \mathbb{E}\big[\big\langle W, Y_n(p_i) \big\rangle\big\langle W, Y_n(p_j) \big\rangle \big] \Bigg\}  \mu(dp_1)\cdots \mu(dp_m)\,,\nonumber 
\intertext{and since $\mathbb{E}\big[\big\langle W, Y_n(p) \big\rangle\big\langle W, Y_n(q) \big\rangle \big]=\big\langle Y_n(p),\, Y_n(q) \big\rangle = T_n(p , q)  $, we get   }
\,=\,&  \int_{A^m}\textup{exp}\Bigg\{\sum_{1\leq i<j\leq m }  T_n(p_i , p_j)  \Bigg\} \mu(dp_1)\cdots \mu(dp_m)\,.\label{UH}
\end{align}
Since $T_n(p,q)$ is the conditional expectation of $T(p,q)$ with respect to $\mathcal{F}_{n}\otimes  \mathcal{F}_{n}  $,  Jensen's inequality implies that~(\ref{UH}) is bounded by~(\ref{IntFinite}).  Since $T_n(p,q)$ converges $\mu\times \mu$-a.e.\ to $T(p,q)$, the expression~(\ref{UH}) converges to~(\ref{IntFinite}) by Fatou's lemma.  Moreover,  since $\sup_{n\in \mathbb{N}}\mathbb{E}\left[ \big( \mathbf{M}_{Y_n}(A)  \big)^m  \right]  $ is finite for each $m$ and     the random variables $\mathbf{M}_{Y_n}(A)$ converge in probability  to $ \mathbf{M}(A)$, the moments $\mathbb{E}\left[ \big( \mathbf{M}_{Y_n}(A)  \big)^m  \right]$ converge to $\mathbb{E}\left[ \big( \mathbf{M}(A)  \big)^m  \right]$ as $n\rightarrow \infty$ for each $m\in \mathbb{N}$.  Hence $\mathbb{E}\left[ \big( \mathbf{M}(A)  \big)^m  \right]$ is equal to~(\ref{IntFinite}).

\end{proof}

\section{Construction of the isometry $\hat{U}_{\mathbf{M}_r}$}\label{SecA}

The construction of $Y_{\mathbf{M}_r}$ in Definition~\ref{DefY} requires a linear, isometric embedding $\hat{U}_{\mathbf{M}_r}:L^2(D,\vartheta_{\mathbf{M}_r}) \rightarrow  \mathcal{H}$ that is a measurable function of $\mathbf{M}_r$.  Given a realization of the measure $(D,\vartheta_{\mathbf{M}_r})$, we can construct such a map $\hat{U}_{\mathbf{M}_r}$ using, for instance, the basic recipe below. 
\begin{itemize}
\item  Let $C(D)$ be the space of real-valued continuous functions on $D$ equipped with the uniform norm: $\|f\|_{\infty} =\max_{x\in D}|f(x)|$. Let $\frak{D}$ be a countable, dense subset  of the unit shell $\big\{f\in  C(D)\,\big|\,\|f\|_{\infty} = 1   \big\}$ and  $\{h_n\}_{n\in \mathbb{N}}$ be an enumeration of $\frak{D}$.

\item  Apply Gram-Schmidt to $\{h_n\}_{n\in \mathbb{N}}$ to generate an orthonormal sequence  $\big\{\mathbf{h}_n^{(r)}\big\}_{n\in \mathbb{N}}$ under the inner product
\begin{align*}
\text{ }\hspace{1cm}\big\langle \phi_1, \phi_2 \big\rangle_{L^{\mathsmaller{2}}(D, \vartheta_{\mathbf{M}_r}) } \,=\,\int_{D}\phi_1(x)\phi_2(x)\vartheta_{\mathbf{M}_r}(dx)\,, \hspace{1.2cm} \phi_1, \phi_2 \in L^{2}(D, \vartheta_{\mathbf{M}_r})\,.
\end{align*}
Let $c_{k,n}^{(r)}\in \R$ denote  the Gram-Schmidt coefficients, i.e., the values satisfying   $h_n=\sum_{k=1}^n c_{k,n}^{(r)}\mathbf{h}_k^{(r)}$.

\item Let $\{e_n\}_{n\in \mathbb{N}}$ denote an orthonormal basis of $\mathcal{H}$.  Define an isometric map $\hat{U}_{\mathbf{M}_r}:\textup{Span}\,\big\{\mathbf{h}_n^{(r)}\big\}_{n\in \mathbb{N}} \rightarrow \mathcal{H} $ by sending $\mathbf{h}_n^{(r)}\mapsto e_n$ for all $n\in \mathbb{N}$.   The algebraic span of $\big\{\mathbf{h}_n^{(r)}\big\}_{n\in \mathbb{N}}$ must be dense in $L^{2}(D, \vartheta_{\mathbf{M}_r})$ since $\|\phi\|_{L^{2}(D, \vartheta_{\mathbf{M}_r})}  \leq\sqrt{ \vartheta_{\mathbf{M}_r}(D)    }\|\phi\|_{\infty}$ and $\vartheta_{\mathbf{M}_r}$  is a.s.\ finite.  Therefore $\hat{U}_{\mathbf{M}_r}$ extends to an isometry from $L^{2}(D, \vartheta_{\mathbf{M}_r})$ into $\mathcal{H}$.

\end{itemize}
 The linear map $\hat{U}_{\mathbf{M}_r}$ depends measurably on $\mathbf{M}_r$ since $\vartheta_{\mathbf{M}_{r}}$ depends measurably on $\mathbf{M}_{r}$.

\end{appendix}

\end{document}